\setlist[enumerate,1]{label={\normalfont (\arabic*)}}
\setlist[enumerate,2]{label={\normalfont (\alph*)}}
\DeclareMathAlphabet{\altmathbb}{U}{bbold}{m}{n}
\DeclareFontFamily{U}{mathx}{}
\DeclareFontShape{U}{mathx}{m}{n}{<-> mathx10}{}
\DeclareSymbolFont{mathx}{U}{mathx}{m}{n}
\DeclareMathAccent{\wc}{0}{mathx}{"71}
\newcommand{\QQ}{{\mathbb Q}}
\newcommand{\FF}{{\mathbb F}}
\newcommand{\ZZ}{{\mathbb Z}}
\newcommand{\CC}{{\mathbb C}}
\newcommand{\bG}{{\mathbf G}}
\newcommand{\bK}{{\mathbf K}}
\newcommand{\bL}{{\mathbf L}}
\newcommand{\bP}{{\mathbf P}}
\newcommand{\bT}{{\mathbf T}}
\newcommand{\bU}{{\mathbf U}}
\newcommand{\bX}{{\mathbf X}}
\newcommand{\bY}{{\mathbf Y}}
\newcommand{\cD}{{\mathcal D}}
\newcommand{\cE}{{\mathcal E}}
\newcommand{\rC}{{\mathrm C}}
\newcommand{\rO}{{\mathrm O}}
\newcommand{\rN}{{\mathrm N}}
\newcommand{\rZ}{{\mathrm Z}}
\newcommand{\scT}{\mathscr{T}}
\newcommand{\Ad}{\mathrm{Ad}}
\newcommand{\simc}{\mathrm{sc}}
\newcommand{\der}{\mathrm{der}}
\newcommand{\Tr}{\mathrm{Tr}}
\DeclareMathOperator{\Aut}{Aut}
\DeclareMathOperator{\Hom}{Hom}
\DeclareMathOperator{\res}{Res}
\DeclareMathOperator{\ind}{Ind}
\DeclareMathOperator{\irr}{Irr}
\DeclareMathOperator{\cf}{cf}
\newcommand{\wh}[1]{\widehat{#1}}
\newcommand{\wt}[1]{\widetilde{#1}}
\newcommand{\gal}{\mathcal{G}}
\newcommand{\tw}[1]{{}^#1\!}
\newcommand{\abs}[1]{\lvert #1 \rvert}
\newcommand{\Lang}{\mathscr{L}}
\newcommand{\triv}{\altmathbb{1}}
\newtheorem{theorem}{Theorem}[section]
\newtheorem{lemma}[theorem]{Lemma}
\newtheorem{proposition}[theorem]{Proposition}
\newtheorem{corollary}[theorem]{Corollary}
\theoremstyle{definition}
\newtheorem{definition}[theorem]{Definition}
\numberwithin{equation}{section}
\begin{document}

\bibliographystyle{ieeetr}

\title[Galois Automorphisms and Jordan Decomposition]
{Galois automorphisms and a unique Jordan decomposition in the case of connected centralizer}
\author{A. A. Schaeffer Fry}
\address[A. A. Schaeffer Fry]{Dept. Mathematics - University of Denver, Denver, CO 80210, USA; and 
Dept. Mathematics and Statistics - MSU Denver, Denver, CO 80217, USA}
\email{mandi.schaefferfry@du.edu}

\author{Jay Taylor}
\address[J. Taylor]{Department of Mathematics, The University of Manchester, Oxford Road, Manchester, M13 9PL, UK}
\email{jay.taylor@manchester.ac.uk}

\author{C. Ryan Vinroot}
  \address[C. Ryan Vinroot]{Department of Mathematics\\
           William \& Mary\\
           P. O. Box 8795\\
           Williamsburg, VA  23187-8795, USA}
   \email{vinroot@math.wm.edu}

\begin{abstract}
We show that the Jordan decomposition of characters of finite reductive groups
can be chosen so that if the centralizer of the relevant semisimple element in
the dual group is connected, then the map is Galois-equivariant. Further, in
this situation, we show that there is a unique Jordan decomposition satisfying
conditions analogous to those of Digne--Michel's unique Jordan decomposition in
the connected center case.

2020 {\it AMS Subject Classification:}  20C33
\end{abstract}

\thanks{The authors gratefully acknowledge support from a SQuaRE at the American
Institute of Mathematics. The first-named author also gratefully acknowledges
support from the National Science Foundation, Award No. DMS-2100912, and her
former institution, Metropolitan State University of Denver, which holds the
award and allows her to serve as PI. The third-named author was supported in
part by a grant from the Simons Foundation, Award \#713090. The authors also
thank Gunter Malle for comments on an earlier draft of the manuscript and the
referee, whose comments helped improve the exposition.
}
\maketitle

\section{Introduction}
Given a finite group $G$, the fields of values of the irreducible complex
characters of $G$, $\irr(G)$, have revealed themselves to be valuable and
interesting number-theoretic data corresponding to the structure of $G$.
Numerous examples of results demonstrating this involve the rational-valued
characters of $G$, real-valued characters of $G$, and the question of whether a
representation of $G$ can be realized over the field of values of its
characters.  Thus, the Galois action on these fields of character values becomes
a key problem in the character theory of finite groups.

In this paper, we study the action of
$\gal:=\mathrm{Gal}(\mathbb{{Q}^{\rm{ab}}}/\mathbb{Q})$ on the set $\irr(G)$,
where $G$ is a finite reductive group.  This family of groups is of particular
interest because of their role as subgroups of connected reductive algebraic
groups, their actions on finite geometries, and their relation to finite simple
groups.  In particular, for finite groups of Lie type, a key to understanding
the action of $\gal$ on the set $\irr(G)$ is understanding how various
parametrizations of the set $\irr(G)$ behave under the action of $\gal$.  In
\cite{SrVi15, SrVi19}, Srinivasan and the third-named author study this question
for the Jordan decomposition of characters, in the case that the underlying
algebraic group has a connected center.  In \cite{SF19}, the first-named author
studies this question for the Howlett--Lehrer parametrization of Harish-Chandra
series.  More results have been obtained in \cite{Ge03} for the case of
connected center, and the authors have studied the question of fields of values
of characters in \cite{SFV19, SFT22}.

The question of the action of $\gal$ on $\irr(G)$ is particularly difficult in
the case that the underlying algebraic group has  disconnected center, and will
play a crucial role in, for example, proving the inductive Galois--McKay
conditions of \cite{NSV} to prove the McKay--Navarro conjecture for odd primes.
(Note that for the prime $2$, this was finished in \cite{RSF22, RSF23}.) 

The results of \cite{SrVi15, SrVi19} make essential use of the unique Jordan
decomposition proved by Digne and Michel in \cite{DiMi90} in the case of
connected center. The goal of the present paper is twofold: we extend the
results of \cite{SrVi19} on the action of $\gal$ on Jordan decomposition to the
case that the underlying group does not necessarily have a connected center, but
that the semisimple element in question has a connected centralizer in the dual
group; and we extend the results of \cite{DiMi90} to show that there is a unique
Jordan decomposition satisfying properties analogous to those of \cite{DiMi90} in
the same situation. Our main result is the following:

\begin{theorem}\label{thm:main}
Let $\bG$ be a connected reductive group and $F\colon \bG\rightarrow \bG$ a
Frobenius endomorphism, and let  $G=\bG^F$ be the corresponding group of Lie
type. Let $(\bG^\ast, F^\ast)$ be dual to $(\bG, F)$ and $s\in
G^\ast:=(\bG^\ast)^{F^\ast}$ a semisimple element such that
$\left(\rC_{\bG^\ast}(s)/\rC^\circ_{\bG^\ast}(s)\right)^{F^*}=1$, or
equivalently $\rC_{\bG^*}(s)^{F^*} \leqslant \rC_{\bG^*}^{\circ}(s)$. There 
exists a unique family of Jordan decomposition maps
$J_{s}\colon\mathcal{E}(G, s)\rightarrow \mathcal{E}(\rC_{G^\ast}(s), 1)$
satisfying properties (1)-(7) of Theorem~\ref{UniqueJord} below. Further, the 
collection $\{J_{s}\mid \rC_{\bG^*}(s)^{F^*} \leqslant \rC_{\bG^*}^{\circ}(s)\}$ 
is $\mathcal{G}$-equivariant (in the sense of Lemma~\ref{galequiv} below).
\end{theorem}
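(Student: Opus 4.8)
The overall plan is to reduce Theorem~\ref{thm:main} to the connected-centre case, where the uniqueness of Digne--Michel \cite{DiMi90} and the $\gal$-equivariance of Srinivasan--Vinroot \cite{SrVi19} are already available, by transporting everything through a regular embedding. Fix a regular embedding $\bG\hookrightarrow\wt\bG$ with $F$ extended to $\wt\bG$: then $\wt\bG$ has connected centre, $\bG$ and $\wt\bG$ share a derived group, and dually there is a surjection $\pi\colon\wt\bG^*\twoheadrightarrow\bG^*$ whose kernel $\bK$ is a connected central torus. Pick a lift $\wt s\in(\wt\bG^*)^{F^*}$ of $s$, which exists because $s$ is $F^*$-fixed and $\bK$ is connected. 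Since $\wt\bG$ has connected centre, $\rC_{\wt\bG^*}(\wt s)$ is connected, and $\pi$ restricts to a surjection $\rC_{\wt\bG^*}(\wt s)\twoheadrightarrow\rC^\circ_{\bG^*}(s)$ with kernel $\bK$; applying Lang's theorem to $\bK$ and using $\rC_{\bG^*}(s)^{F^*}=\rC^\circ_{\bG^*}(s)^{F^*}$, this yields a surjection $\rC_{\wt\bG^*}(\wt s)^{F^*}\twoheadrightarrow\rC_{G^*}(s)$ with kernel $\bK^{F^*}$. As $\bK^{F^*}$ lies in the identity component of the centre of $\rC_{\wt\bG^*}(\wt s)$, every unipotent character of $\rC_{\wt\bG^*}(\wt s)^{F^*}$ is trivial on it, so inflation gives a canonical bijection between $\mathcal{E}(\rC_{G^*}(s),1)$ and $\mathcal{E}(\rC_{\wt G^*}(\wt s),1)$. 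On the other side, it is a known consequence of the hypothesis --- via the description of how Lusztig series behave under restriction from a regular embedding (work of Bonnafé) --- that $\res^{\wt G}_G$ is a bijection from $\mathcal{E}(\wt G,\wt s)$ onto $\mathcal{E}(G,s)$. I would then define $J_s$ to be the composite of the inverse of this restriction bijection, the Digne--Michel map $\wt J_{\wt s}$, and the inverse of the inflation identification above.

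The first point to check is that $J_s$ depends on neither $\wt s$ nor the embedding. Changing the lift to $\wt s z$ with $z\in\bK^{F^*}$ replaces $\mathcal{E}(\wt G,\wt s)$ by $\mathcal{E}(\wt G,\wt s z)=\mathcal{E}(\wt G,\wt s)\otimes\wh z$, where $\wh z\in\irr(\wt G/G)$ is dual to $z$, leaves $\rC_{\wt\bG^*}(\wt s z)=\rC_{\wt\bG^*}(\wt s)$ and hence the inflation map unchanged, and by the central-character-twist property of $\wt J$ one has $\wt J_{\wt s z}(\wt\chi\otimes\wh z)=\wt J_{\wt s}(\wt\chi)$; the composite is therefore unaffected, and independence of the embedding follows from a standard comparison of regular embeddings. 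To obtain properties (1)--(7) for $\{J_s\}$ I would verify them one by one, in each case lifting the operation in question --- Lusztig induction from Levi subgroups, Alvis--Curtis/Lusztig duality, twisting by central characters, restriction to the derived subgroup, the $R_{\bT}$-inner-product relations --- to $\wt\bG$, invoking the already-established Digne--Michel property there, and pushing it back down. This works because each such operation is compatible with the three descent maps $\res^{\wt G}_G$, $\pi$, and the inflation of unipotent characters; most of these compatibilities are in the literature (Bonnafé for non-connected centre; Digne--Michel and Cabanes--Enguehard for the relevant functoriality), the deepest input being the compatibility of Lusztig induction with restriction from a regular embedding in all characteristics.

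For the $\gal$-equivariance, the input is that $\{\wt J_{\wt s}\}$ is $\gal$-equivariant \cite{SrVi19}, that $\res^{\wt G}_G$ commutes with the $\gal$-action on characters, and that inflation along $\pi$ does too, being an honest inflation of class functions. Given $\sigma\in\gal$ and $\chi\in\mathcal{E}(G,s)$, write $\chi=\res^{\wt G}_G\wt\chi$, so that $\sigma\chi=\res^{\wt G}_G(\sigma\wt\chi)$ with $\sigma\wt\chi\in\mathcal{E}(\wt G,\wt s^{\,\sigma})$ for the Galois-twisted semisimple element $\wt s^{\,\sigma}$ attached to $\wt s$ and $\sigma$ in \cite{SrVi19}. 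One must then check that $\pi(\wt s^{\,\sigma})$ is the corresponding twist of $s$ and that the resulting identifications of centralizers are compatible; granting this, the square expressing $\gal$-equivariance in the sense of Lemma~\ref{galequiv} commutes because it does so upstairs for $\wt\bG$. The only genuinely new ingredient here is this compatibility of the Galois twist on semisimple classes with the regular embedding, which should follow from the way that twist is defined on the cocharacter lattices.

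Finally, for uniqueness, suppose $\{J'_s\}$ is another family, indexed by all $s$ with $\rC_{\bG^*}(s)^{F^*}\leqslant\rC^\circ_{\bG^*}(s)$, satisfying (1)--(7). Mirroring \cite{DiMi90}, I would induct on the semisimple rank of $\bG$: if $s$ is not isolated in $\bG^*$ then $\rC^\circ_{\bG^*}(s)$ is contained in a proper $F^*$-stable Levi $\bL^*$, so that $\mathcal{E}(G,s)$ is obtained from $\mathcal{E}(L,s)$ by Lusztig induction, $\rC_{L^*}(s)=\rC_{G^*}(s)$, and the condition on component groups passes to $\bL^*$; then the property tying $J_s$ to Lusztig induction from $\bL$ together with the inductive hypothesis forces $J'_s=J_s$. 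When $s$ is isolated the intrinsic arguments of \cite{DiMi90} are no longer available as stated, and instead I would show that $\{J'_s\}$ lifts through the regular embedding --- using the central-character-twist property to see that $\wt\chi\mapsto J'_s(\res^{\wt G}_G\wt\chi)$ is a well-defined bijection $\mathcal{E}(\wt G,\wt s)\to\mathcal{E}(\rC_{\wt G^*}(\wt s),1)$ --- to a family for $\wt\bG$ satisfying the Digne--Michel list, whence it equals $\{\wt J_{\wt s}\}$ and $J'_s=J_s$. I expect this isolated case to be the main obstacle: it requires knowing that the list (1)--(7) has been chosen strong enough that no information is lost in passing to $\wt\bG$, which is exactly why those properties --- especially the interaction with Lusztig induction and with the derived subgroup --- have to be set up with care in the non-connected-centre setting.
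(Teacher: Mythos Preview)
Your construction of $J_s$ via a regular embedding, the verification that it is independent of the lift $\wt s$ and of the embedding, and the deduction of $\gal$-equivariance from \cite{SrVi19} are essentially the paper's argument. The transport of properties (1)--(5) through the embedding is also what the paper does (Propositions~\ref{Jordiff12}, \ref{Jordiff3}, \ref{Jordiff45}), though note that the list (1)--(7) contains neither Alvis--Curtis duality nor restriction to the derived subgroup; you should double-check what the properties actually are.

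Where you diverge is in the uniqueness argument, and here you are making life unnecessarily hard. Your induction on semisimple rank with an isolated/non-isolated case split is not needed, and the isolated case you flag as ``the main obstacle'' evaporates once you notice the following: property~(6) is stated for \emph{arbitrary} isotypies, and a regular embedding $\iota\colon\bG\hookrightarrow\wt\bG$ is an isotypy. Applying (6) to $\varphi=\iota$ directly forces
\[
f_{\tilde s}^{\wt\bG}=\tw{\top}\iota^{\ast}\circ f_s^{\bG}\circ\tw{\top}\iota,
\]
where $f_{\tilde s}^{\wt\bG}$ is the member of your family attached to $\wt\bG$. Since $\rZ(\wt\bG)$ is connected, Lemma~\ref{lem:Js-is-DM-bij} says this must be Digne--Michel's map, and the displayed equation then pins down $f_s^{\bG}$ uniquely via Lemma~\ref{restrictionbij}. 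No induction, no isolated case.

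The cost of this streamlining is that property~(6) is genuinely stronger than condition (vi) of \cite[Thm.~7.1]{DiMi90}, which only treats epimorphisms with central-torus kernel. To make Lemma~\ref{lem:Js-is-DM-bij} work you must first prove that Digne--Michel's bijection already satisfies this strengthened (6); the paper does this as a separate result (Theorem~\ref{thm:prop-6-DM}), reducing an arbitrary isotypy between connected-centre groups to the original (vi) by a product-with-centre trick. You should identify this as a needed technical step rather than folding it silently into ``compatibilities in the literature''. Likewise, establishing that your constructed $J_s^{\bG}$ itself satisfies (6) for an arbitrary isotypy $\varphi\colon\bG\to\bG_1$ requires lifting $\varphi$ to an isotypy between regular embeddings (Lemma~\ref{lem:epilifts}) and then invoking Theorem~\ref{thm:prop-6-DM} upstairs; this is not automatic from ``pushing the operation back down''.
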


In
the context of the theorem, we may embed the underlying connected reductive group $\bG$ into
another connected reductive group $\wt{\bG}$ with a connected center, using a
so-called regular embedding $\iota \colon \bG\hookrightarrow \wt{\bG}$, which in
turn yields a dual surjection $\iota^\ast\colon \wt{\bG}^\ast \rightarrow
\bG^\ast$ of the dual groups. Our collection of bijections $J_s$ is defined using such a regular embedding.
We show that this map is independent of the choice of preimage $\tilde{s}\in(\wt\bG^\ast)^{F^\ast}$ 
such that $\iota^\ast(\tilde{s})=s$ and, more remarkably, that this map is even independent 
of the choice of regular embedding (see Proposition \ref{prop:bumper-Js} below).

Along the way, we prove results about multiplicity-free restrictions (see Section \ref{sec:multfree}), 
extending impactful results of Lusztig, and results concerning the interaction 
of Jordan decomposition and Deligne-Lusztig induction with isotypies (see Sections \ref{sec:DLandisotypies} 
and \ref{sec:JD-and-isotypies} and Theorem \ref{thm:prop-6-DM}). We believe these results may be of independent interest. 

\subsection{Notation}
For any group $G$ and element $g \in G$ we denote by $\Ad_g : G \to G$ the inner
automorphism defined by $\Ad_g(x) = {}^gx = gxg^{-1}$. We also write $[g,x] =
gxg^{-1}x^{-1}$ for the commutator of $g,x \in G$. If $H \leqslant G$ is a
subgroup, then by restriction we obtain an isomorphism $H \to {}^gH = gHg^{-1}$,
which we also denote by $\Ad_g$.
 
Suppose now that $G$ is finite. We write $\cf(G)$ for the space of
complex-valued class functions on $G$ and $\irr(G) \subseteq \cf(G)$ for the set
of complex irreducible characters of $G$. We let $\triv\in\irr(G)$ denote the
trivial character of $G$. If $f \in \cf(G)$ is a class function then we write
$\irr(G \mid f)$ for the irreducible constituents of $f$. 
   
If $\phi : G\to H$ is a homomorphism between finite groups, we write
$\tw{\top}\phi$ for the function $\tw{\top}\phi\colon \cf(H)\rightarrow\cf(G)$
defined by $\tw{\top}\phi(\chi) = \chi \circ \phi$. If $\phi$ is injective and
we identify $G$ with $\phi(G)$, then this map can be viewed as restriction;
similarly if $\phi$ is surjective then this map can be viewed as inflation
through the quotient map $G \to G/{\ker\phi}$.

Finally if $G$ and $H$ are finite groups then for any $\chi \in \irr(G)$ and
$\psi \in \irr(H)$ we write $\chi\boxtimes\psi \in \irr(G\times H)$ for the
character defined by $(\chi\boxtimes\psi)(g,h) = \chi(g)\psi(h)$ for any $(g,h)
\in G\times H$.

\section{Generalizing Digne--Michel's Unique Jordan Decomposition}\label{sec:DMmap}
In this section, we develop some basic notation and state our main result.
Throughout, $p>0$ will be a fixed prime integer and $\FF = \overline{\FF}_p$
will be an algebraic closure of the finite field of cardinality $p$. All
algebraic groups are assumed to be affine $\FF$-varieties.

If $\bT$ is a torus, then we denote by $X(\bT)$ and $\wc{X}(\bT)$ the character
and cocharacter groups of $\bT$. Recall that for any two tori $\bT$ and $\bT'$
we have bijections
\begin{equation*}
    \Hom(\bT,\bT')
    \overset{X(-)}{\longrightarrow}
    \Hom_{\ZZ}(X(\bT'),X(\bT))
    \overset{\wc{(-)}}{\longrightarrow}
    \Hom_{\ZZ}(\wc X(\bT),\wc X(\bT'))
    \overset{\wc{X}(-)}{\longleftarrow}
    \Hom(\bT,\bT')
\end{equation*}
where $\Hom(\bT,\bT')$ denotes the set of homomorphisms of algebraic groups. If
$\phi \in \Hom(\bT,\bT')$ then $X(\phi)$ is the map $\chi \mapsto \chi\circ\phi$
and $\wc{X}(\phi)$ is the map $\gamma \mapsto \phi\circ\gamma$.

If $\bG$ is connected reductive and $\bT \leqslant \bG$ is a maximal torus, then
we denote by $\Phi_{\bG}(\bT) \subseteq X(\bT)$ and $\wc\Phi_{\bG}(\bT)
\subseteq \wc{X}(\bT)$ the roots and coroots of $\bG$, respectively. We also
call $W = \rN_{\bG}(\bT)/\bT$ the Weyl group of $\bG$ (with respect to $\bT$).

Now let $(\bG,F)$ be a pair consisting of an algebraic group $\bG$ and a
Frobenius endomorphism $F: \bG \to \bG$. The morphism $\Lang = \Lang_F =
\Lang_{\bG,F} : \bG \to \bG$, defined by $\Lang(g) = g^{-1}F(g)$, is called the
Lang map of $(\bG,F)$. It is surjective when $\bG$ is connected. If $\bG$ is
connected reductive, then we refer to $(\bG,F)$, or the finite group of fixed
points $G = \bG^F \leqslant \bG$, as a finite reductive group. We denote by
$\varepsilon_{(\bG,F)} = \varepsilon_{\bG} \in \{1,-1\}$ the sign defined in
\cite[Def.~7.1.5]{dmbook2}.

Following Steinberg \cite[p.~78]{St68} we say $(\bG,F)$ is $F$-simple if $\bG =
\bG_1\cdots\bG_n$ is an almost direct product of quasisimple groups permuted
cyclically by $F$. We refer to the type of an $F$-simple group $(\bG,F)$ as the
type of $(\bG_1,F^n)$, which is the type of the underlying root system of
$\bG_1$ decorated by the order of the automorphism induced by $F^n$. In general,
the type of $(\bG,F)$ is the product of the types of its $F$-simple components.

Let $(\bG,F)$ and $(\bG^{\ast},F^{\ast})$ be two finite reductive groups and
assume $(\bT,\bT^{\ast},\delta)$ is a triple consisting of: an $F$-stable
maximal torus $\bT \leqslant \bG$, an $F^{\ast}$-stable maximal torus
$\bT^{\ast} \leqslant \bG^{\ast}$, and an isomorphism $\delta : X(\bT) \to
\wc{X}(\bT^{\ast})$ satisfying
\begin{equation}\label{eq:commute-frob}
    \wc{X}(F^{\ast}) \circ \delta = \delta\circ X(F).
\end{equation}
We say $(\bG,F)$ and $(\bG^{\ast},F^{\ast})$ are \emph{dual} if for some triple
$\scT = (\bT,\bT^{\ast},\delta)$ we have $\delta$ is an isomorphism of root
data, see \cite[Def.~11.1.10]{dmbook2}. We call $\scT$ a \emph{witness} to the
duality and $\cD = ((\bG,F),(\bG^{\ast},F^{\ast}),\scT)$ a \emph{rational
duality}. Note that this induces a duality between the corresponding Weyl groups
$W = \rN_{\bG}(\bT)/\bT$ and $W^* = \rN_{\bG^*}(\bT^*)/\bT^*$. 

Let us fix once and for all an embedding $\FF^{\times} \hookrightarrow
\CC^{\times}$ and an isomorphism $\FF^{\times} \to (\QQ/\ZZ)_{p'}$. If two tori
$(\bT,F)$ and $(\bT^{\ast},F^{\ast})$ are dual, then each isomorphism $\delta :
X(\bT) \to \wc{X}(\bT^{\ast})$ satisfying \eqref{eq:commute-frob} determines a
group isomorphism $\bT^{* F^*} \to \irr(\bT^F)$ which we denote by $s \mapsto
\hat s$, see \cite[Prop.~11.1.14]{dmbook2}. This depends on $\delta$ and our
preceding choices of embedding $\FF^{\times} \hookrightarrow \CC^{\times}$ and
isomorphism $\FF^{\times} \to (\QQ/\ZZ)_{p'}$.

Assume we have a rational duality and let $G^\ast := \bG^{\ast F^\ast}$ be the
finite dual group. Given a semisimple element $s\in G^\ast$, we denote by
$\cE(G, s)\subseteq \irr(G)$ the \emph{rational Lusztig series} corresponding to
the $G^\ast$-conjugacy class of $s$, see \cite[Def.~12.4.3]{dmbook2}. If $z \in
\rZ(G^{\ast}) \leqslant \bT^{\ast F^{\ast}}$ is a central element, then
$\cE(G,z)$ contains a linear character $\hat z$ whose restriction to $\bT^F$ is
the character also denoted by $\hat z$ above, see \cite[Prop.~11.4.12]{dmbook2}.
We have $\cE(G,z) = \cE(G,1)\otimes \hat{z}$.

Lusztig \cite{Lu88} has shown that there exists a Jordan decomposition map
$\cE(G, s)  \rightarrow \cE(\rC_{G^\ast}(s), 1)$ (that is, a map satisfying
condition (1) of Theorem \ref{UniqueJord} below) but such a map is not unique in
general. Generalising a result of Digne and Michel \cite[Thm.~7.1]{DiMi90} we
will show that, when $\rC_{G^{\ast}}(s) \leqslant \rC_{\bG^{\star}}^{\circ}(s)$,
this map (or rather, this family of maps, per the conditions below) can be
chosen uniquely to satisfy the following properties.

\begin{theorem}\label{UniqueJord}
Let $\cD = ((\bG,F),(\bG^{\ast},F^{\ast}),\scT_0)$ be a rational duality, with
$\scT_0 = (\bT_0,\bT_0^{\ast},\delta_0)$, and let $\mathcal{S}_\cD\subseteq G^\ast$ be the set of semisimple elements $s \in
G^{\ast}$ satisfying $\rC_{G^{\ast}}(s) \leqslant \rC_{\bG^{\star}}^{\circ}(s)$. Then there exists
 a unique collection of bijections
\begin{equation*}
   J_s^{\bG} = J_s^{\cD} : \cE(G, s) \longrightarrow \cE(\rC_{G^*}(s), 1) 
\end{equation*}
indexed by $s\in \mathcal{S}_\cD$
such that the following properties hold:
\begin{enumerate}
    \item If $\bT \leqslant \bG$ and $\bT^{\ast} \leqslant \bG^{\ast}$ are dual
    maximal tori that are $F$-stable and $F^{\ast}$-stable, respectively, and
    $\bT^{\ast} \leqslant \rC_{\bG^{\ast}}^{\circ}(s)$, then for any $\chi \in
    \cE(G, s)$ we have
    \begin{equation*}
    \langle \chi, R_{\bT}^{\bG}(\wh{s}) \rangle
    =
    \varepsilon_{\bG} \varepsilon_{\rC_{\bG^*}^{\circ}(s)}
    \langle J_s^{\bG}(\chi), R_{\bT^*}^{\rC_{\bG^*}^{\circ}(s)} (\triv) \rangle.
    \end{equation*}

    \item If $s=1$ then:
    \begin{enumerate}
        \item Let $d$ be the smallest positive integer such that $F^d$ is a
        split Frobenius endomorphism, see \cite[Def.~4.3.2]{dmbook2}.  The
        eigenvalues of $F^d$ associated to $\chi$ are equal, up to an integer
        power of $q^{d/2}$, to the eigenvalues of $F^{*d}$ associated to
        $J_1^{\bG}(\chi)$.
        
        \item If $\chi$ is in the principal series, then $J_1^{\bG}(\chi)$ and
        $\chi$ correspond to the same character of the Hecke algebra.
    \end{enumerate}

    \item If $z \in \rZ(G^*)$ is central and $\chi \in \cE(G, s)$, then
    \begin{equation*}
    J_{sz}^{\bG}(\chi \otimes \hat{z}) = J_s^{\bG}(\chi).
    \end{equation*}

    \item  If $\bL \leqslant \bG$ and $\bL^{\ast} \leqslant \bG^*$ are dual Levi
    subgroups that are $F$-stable and $F^{\ast}$-stable, respectively, and
    $\rC_{\bG^*}^{\circ}(s) \leqslant \bL^{\ast}$, then extending linearly we have
    \begin{equation*}
    J_s^{\bL} = J_s^{\bG} \circ R_{\bL}^{\bG}
    \end{equation*}
    as maps $\ZZ\cE(L,s) \to \ZZ\cE(\rC_{G^{\star}}(s),1)$, where
    $R_{\bL}^{\bG}$ denotes Lusztig's twisted induction.

    \item Assume $(\bG, F)$ is $F$-simple of type $E_8$ and $(\rC_{\bG^*}(s),
    F^*)$ is of type $E_7.A_1$ (respectively, $E_6.A_2$, respectively ${^2 E_6}.
    {^2 A_2}$). If $\bL \leqslant \bG$ and $\bL^{\ast} \leqslant \bG^*$ are dual
    Levi subgroups of type $E_7$ (respectively, $E_6$, respectively $E_6$) that
    are $F$-stable and $F^{\ast}$-stable, respectively, and $s \in
    \rZ(\bL^{\ast})$, then the following diagram is commutative:
    \begin{center}
    \begin{tikzcd}[sep=1cm]
    \ZZ\cE(G, s)
    \arrow[r,"J_s^{\bG}"]
    & 
    \ZZ\cE(\rC_{G^*}(s), 1)
    \\
    \ZZ\cE(L, s)^{\bullet}
    \arrow[r,"J_s^{\bL}"]
    \arrow[u,"R_{\bL}^{\bG}"]
    &
    \ZZ\cE(L^{\ast}, 1)^{\bullet}
    \arrow[u,swap,"R_{\bL^{\ast}}^{\rC_{\bG^{\ast}}(s)}"]
    \end{tikzcd}
    \end{center}
    where the superscript $\bullet$ denotes the cuspidal part of the Lusztig
    series. 

    \item For any isotypy $\varphi : (\bG,F) \to (\bG_1,F_1)$, with dual
    $\varphi^{\ast} : (\bG_1^{\ast},F_1^{\ast}) \to (\bG^{\ast},F^{\ast})$ and
    any semisimple element $s_1 \in G_1^{\star}$ satisfying $s =
    \varphi^*(s_1)$, the following diagram commutes
    \begin{center}
    \begin{tikzcd}[sep=1cm]
    \cE(G, s)
    \arrow[r,"J_s^{\bG}"]
    & 
    \cE(\rC_{G^*}(s), 1)
    \arrow[d,"\tw{\top}\varphi^{\ast}"]
    \\
    \cE(G_1, s_1)
    \arrow[r,"J_{s_1}^{\bG_1}"]
    \arrow[u,"\tw{\top}\varphi"]
    &
    \cE(\rC_{G_1^*}(s_1), 1)
    \end{tikzcd}
    \end{center}

    \item If $\bG = \prod_i \bG_i$ is a direct product of $F$-stable subgroups,
    then $J_{\prod_i s_i}^{\bG} = \prod_i J^{\bG_i}_{s_i}$.
\end{enumerate}
\end{theorem}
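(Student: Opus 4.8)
The plan is to construct the family $\{J_s^{\bG}\}_{s\in\mathcal{S}_\cD}$ by descent from a group of connected centre, and to obtain uniqueness by reducing back to the Digne--Michel theorem. Fix a regular embedding $\iota\colon\bG\hookrightarrow\wt{\bG}$ with $\wt{\bG}$ of connected centre, with dual surjection $\iota^{\ast}\colon\wt{\bG}^{\ast}\twoheadrightarrow\bG^{\ast}$ whose kernel is a central torus of $\wt{\bG}^{\ast}$. For $s\in\mathcal{S}_\cD$ pick $\tilde s\in\wt{G}^{\ast}$ with $\iota^{\ast}(\tilde s)=s$ and set $\wt{\bH}=\rC_{\wt{\bG}^{\ast}}(\tilde s)$, $\bH=\rC_{\bG^{\ast}}^{\circ}(s)$; since $\rZ(\wt{\bG})$ is connected, $\wt{\bH}$ is connected, $\iota^{\ast}$ restricts to a surjection $\wt{\bH}\twoheadrightarrow\bH$ with central‑torus kernel, and hence (Lang) to $\wt{\bH}^{F^{\ast}}\twoheadrightarrow\bH^{F^{\ast}}=\rC_{G^{\ast}}(s)$, the last equality being the hypothesis on $s$. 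The first key input is the multiplicity‑free restriction theory of Section~\ref{sec:multfree} (extending Lusztig~\cite{Lu88}): the condition $\rC_{G^{\ast}}(s)\leqslant\rC_{\bG^{\ast}}^{\circ}(s)$ is exactly what makes $\tw{\top}\iota$ restrict to a \emph{bijection} $\cE(\wt{G},\tilde s)\to\cE(G,s)$, i.e.\ every $\res^{\wt G}_{G}\wt\chi$ is irreducible and distinct restrictions stay distinct. Dually, as unipotent characters are trivial on $(\ker\iota^{\ast})^{F^{\ast}}$, inflation $\tw{\top}\iota^{\ast}$ restricts to a bijection $\cE(\rC_{G^{\ast}}(s),1)\xrightarrow{\ \sim\ }\cE(\wt{\bH}^{F^{\ast}},1)$. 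We then \emph{define} $J_s^{\bG}$ to be the unique bijection with
\begin{equation*}
  \tw{\top}\iota^{\ast}\circ J_s^{\bG}\circ\tw{\top}\iota=\wt{J}_{\tilde s}^{\wt{\bG}}\qquad\text{on }\cE(\wt{G},\tilde s),
\end{equation*}
where $\wt{J}_{\tilde s}^{\wt{\bG}}$ is the Digne--Michel Jordan decomposition~\cite[Thm.~7.1]{DiMi90} of $\wt{\bG}$.

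That $J_s^{\bG}$ is independent of these choices is Proposition~\ref{prop:bumper-Js}: two preimages of $s$ differ by $\tilde s\mapsto\tilde s\tilde z$ with $\tilde z\in(\ker\iota^{\ast})^{F^{\ast}}$, so $\wt\chi$ over a given $\chi$ is replaced by $\wt\chi\otimes\wh{\tilde z}$, and since $\wh{\tilde z}$ is trivial on $G$ while $\wt{J}_{\tilde s\tilde z}^{\wt{\bG}}(\wt\chi\otimes\wh{\tilde z})=\wt{J}_{\tilde s}^{\wt{\bG}}(\wt\chi)$ by property~(3) for the Digne--Michel map, the two recipes agree after inflating from $\rC_{G^{\ast}}(s)$; independence of $\iota$ is reduced, by dominating two regular embeddings by a common one, to isotypy‑compatibility for an inclusion $\wt{\bG}_1\hookrightarrow\wt{\bG}_2$ of connected‑centre groups, available from the characterisation in~\cite{DiMi90}. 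Several axioms are then immediate or nearly so: property~(5) holds because a group of type $E_8$ already has trivial centre, so $\bG=\wt{\bG}$ there; property~(2) descends verbatim since for $s=1$ restriction is a bijection $\cE(\wt{G},1)\to\cE(G,1)$ preserving eigenvalues of Frobenius and Hecke parameters; property~(7) is formal.

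Properties~(1), (3) and~(4) need more bookkeeping but no new idea. For~(1) one combines $R^{\bG}_{\bT}(\wh{s})=\res^{\wt G}_{G}R^{\wt{\bG}}_{\wt{\bT}}(\wh{\tilde s})$ for a compatible torus $\wt{\bT}$, the fact that $R^{\wt{\bH}}_{\wt{\bT}^{\ast}}(\triv)$ is inflated from $R^{\bH}_{\bT^{\ast}}(\triv)$, a Frobenius‑reciprocity computation in which the $\mathcal{S}_\cD$‑hypothesis kills all cross‑terms, and the equality of the sign products $\varepsilon_{\wt{\bG}}\varepsilon_{\wt{\bH}}$ and $\varepsilon_{\bG}\varepsilon_{\bH}$, to reduce~(1) for $J_s^{\bG}$ to~(1) for $\wt{J}_{\tilde s}^{\wt{\bG}}$; (3) uses compatibility of $\wh{(-)}$ with $\iota^{\ast}$, and (4) uses that Lusztig induction commutes with restriction along regular embeddings. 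The one genuinely new ingredient is property~(6), compatibility with an arbitrary isotypy $\varphi\colon(\bG,F)\to(\bG_1,F_1)$, isolated as Theorem~\ref{thm:prop-6-DM} and proved in Sections~\ref{sec:DLandisotypies}--\ref{sec:JD-and-isotypies} via an analysis of the interaction of Deligne--Lusztig induction with isotypies.

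For uniqueness, suppose $\{J_s'\}_{s\in\mathcal{S}_\cD}$ also satisfies (1)--(7). Applying property~(6) to a regular embedding $\iota\colon\bG\hookrightarrow\wt{\bG}$ forces $\tw{\top}\iota^{\ast}\circ J_s'\circ\tw{\top}\iota=J_{\tilde s}'^{\wt{\bG}}$; but on the connected‑centre group $\wt{\bG}$ the axioms~(1)--(7) specialise to the Digne--Michel axioms, so $J_{\tilde s}'^{\wt{\bG}}=\wt{J}_{\tilde s}^{\wt{\bG}}$, and inverting the bijections $\tw{\top}\iota,\tw{\top}\iota^{\ast}$ gives $J_s'=J_s$. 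I expect the main obstacle to be Theorem~\ref{thm:prop-6-DM} itself: the Digne--Michel map is characterised by scalar products with Deligne--Lusztig characters, central‑character behaviour, and the $E_8$ diagram, none of which transparently survives the (non‑bijective) character correspondences attached to a general isotypy, so proving isotypy‑equivariance requires a genuinely new study of $R^{\bG}_{\bL}$ under isotypies, with careful control of multiplicities and signs, before it can be fed into the uniqueness characterisation. A secondary difficulty is the bookkeeping of Section~\ref{sec:multfree}: pinning down exactly when restriction along a regular embedding is bijective on a rational Lusztig series, and matching $\cE(\wt{G},\tilde s)$ with $\cE(G,s)$.
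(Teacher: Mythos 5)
Your proposal follows essentially the same route as the paper: define $J_s^{\bG}$ by transporting Digne--Michel's bijection across a regular embedding (using the multiplicity-free restriction theory to get a bijection on the relevant series), prove independence of the choices, transfer properties (1)--(7) from the connected-centre case by isotypy-compatibility arguments, and deduce uniqueness by applying the isotypy axiom (6) to the regular embedding and invoking Digne--Michel's uniqueness. One point, however, is not quite right and matters. You claim that independence of the regular embedding, after dominating two embeddings by a common one, ``is available from the characterisation in [DiMi90]'' because it reduces to compatibility with an inclusion $\wt{\bG}_1\hookrightarrow\wt{\bG}_2$ of connected-centre groups. But Digne--Michel's axiom (vi) only covers \emph{epimorphisms} with central-torus kernel, whereas the maps $\iota_i'\colon\wt{\bG}_i\hookrightarrow\bG'$ produced by Asai's theorem are injective isotypies that are certainly not epimorphisms; the needed compatibility is exactly what Theorem~\ref{thm:prop-6-DM} (the extension of Digne--Michel's (vi) to arbitrary isotypies between connected-centre groups) provides, and the paper's proof of Proposition~\ref{prop:bumper-Js}(3) uses that theorem twice. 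So Theorem~\ref{thm:prop-6-DM} is already indispensable for the map $J_s^{\bG}$ to be well-defined, not merely for verifying axiom (6) afterward. Your shortcut for axiom (5) via the triviality of $\rZ(\bG)$ in type $E_8$ is valid (one also needs the standard fact that Levi subgroups of $E_8$ have connected centre so that $J_s^{\bL}$ is also Digne--Michel's map); the paper instead derives (5) uniformly with (4) from Proposition~\ref{Jordiff45}, but your alternative buys a small simplification at no cost.
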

In condition (2b) above, by the Hecke algebra, we mean $\mathrm{End}_{\mathbb{C}
G}(\ind_B^G(\triv))$,  where $B=\mathbf{B}^F$ and $\mathbf{B}$ is an $F$-stable
Borel subgroup of $\mathbf{G}$, where the characters of this Hecke algebra are
in natural bijection with the characters of the Hecke algebra for the
corresponding dual groups through an isomorphism of Lusztig \cite{Lu81}. The
definition of an isotypy, used in (6), is recalled in
Section~\ref{sec:multfree}.

The properties in Theorem~\ref{UniqueJord} parallel those of
\cite[Thm.~7.1]{DiMi90}. Indeed, our bijection is built from the bijection
constructed in \cite{DiMi90}. However, both condition (2a) and (6) of
Theorem~\ref{UniqueJord} are stronger than the corresponding conditions in
\cite[Thm.~7.1]{DiMi90}. In \cite{SrVi15} Srinivasan--Vinroot have shown that
Digne--Michel's bijection satisfies (2a) of Theorem~\ref{UniqueJord} and we will
show in Theorem~\ref{thm:prop-6-DM} that it also satisfies (6). As an
epimorphism with kernel a central torus is certainly an isotypy, we have the
following.

\begin{lemma}\label{lem:Js-is-DM-bij}
If $\rZ(\bG)$ is connected, then any collection of bijections satisfying the properties listed in Theorem~\ref{UniqueJord} must be
Digne--Michel's unique Jordan decomposition defined in \cite{DiMi90}.
\end{lemma}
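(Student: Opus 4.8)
The plan is to reduce to the characterization in \cite{DiMi90} by verifying that, when $\rZ(\bG)$ is connected, our list of properties (1)--(7) in Theorem~\ref{UniqueJord} implies the defining properties of Digne--Michel's unique Jordan decomposition. First I would recall that Digne--Michel characterize their bijection by a short list of conditions: the scalar-product compatibility with Deligne--Lusztig characters (their analogue of our (1)), compatibility with twisted induction from Levi subgroups containing $\rC_{\bG^*}^\circ(s)$ (our (4)), multiplication by linear characters $\hat z$ for $z$ central (our (3)), the eigenvalue/Hecke-algebra normalization at $s=1$ (our (2a)--(2b)), the $E_8$ diagram condition (our (5)), compatibility with direct products (our (7)), and a condition of compatibility with epimorphisms having central-torus kernel. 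The point is that each of these Digne--Michel conditions appears verbatim (or in a strictly stronger form) among our conditions (1)--(7), \emph{except} that Digne--Michel's last condition is phrased for epimorphisms with central torus kernel, whereas ours (6) is phrased for arbitrary isotypies.

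So the key step is to observe, as the excerpt already flags, that an epimorphism $\varphi\colon \bG\to \bG_1$ whose kernel is a central torus is in particular an isotypy, so our condition (6) specializes to exactly Digne--Michel's compatibility condition. Combined with the fact that (1)--(5) and (7) already contain the remaining Digne--Michel conditions, we conclude that any collection $\{J_s\}$ satisfying our (1)--(7) satisfies all of Digne--Michel's defining properties. Since Digne--Michel prove (in \cite[Thm.~7.1]{DiMi90}) that those properties determine the bijection uniquely in the connected-center case, it follows that $\{J_s\}$ must coincide with their bijection. I would also note that when $\rZ(\bG)$ is connected the hypothesis $\rC_{G^*}(s)\leqslant \rC_{\bG^*}^\circ(s)$ is automatic for every semisimple $s$ (since centralizers of semisimple elements are then connected), so $\mathcal S_\cD = G^*_{\mathrm{ss}}$ and our collection is indexed over the same set as theirs; this is needed so that the two families are genuinely being compared on the same domain.

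The only mild subtlety I anticipate — and the step I would treat most carefully — is matching up the precise bookkeeping between the two lists: verifying that the signs $\varepsilon_{\bG}\varepsilon_{\rC_{\bG^*}^\circ(s)}$, the normalizations in (2a)--(2b), and the direction of the diagrams in (5)--(6) are exactly as in \cite{DiMi90}, rather than merely equivalent after some reindexing. This is essentially a dictionary check, and I would handle it by citing \cite[Thm.~7.1]{DiMi90} condition-by-condition. Nothing here requires new mathematics beyond the remark that central-torus epimorphisms are isotypies; the lemma is really a consistency statement ensuring that our generalization specializes correctly, so the proof is short. I would close by remarking that this also shows our conditions (1)--(7) are not over-determined in the connected-center case, since Digne--Michel already exhibit a collection satisfying the (a priori weaker) versions of all of them, and \cite{SrVi15} together with Theorem~\ref{thm:prop-6-DM} upgrade that collection to satisfy the stronger forms (2a) and (6) as well.
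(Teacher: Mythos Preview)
Your proposal is correct and follows essentially the same approach as the paper: both observe that conditions (1)--(7) parallel (and in the cases of (2a) and (6), strengthen) the conditions in \cite[Thm.~7.1]{DiMi90}, with the key remark being that an epimorphism with central-torus kernel is an isotypy so that (6) specializes to Digne--Michel's corresponding condition. The paper states this reasoning in the paragraph immediately preceding the lemma rather than as a separate proof; your additional observations (that $\rC_{G^*}(s)\leqslant \rC_{\bG^*}^\circ(s)$ is automatic when $\rZ(\bG)$ is connected, and the closing remark on consistency via \cite{SrVi15} and Theorem~\ref{thm:prop-6-DM}) are correct elaborations but not additional ideas beyond what the paper records.
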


We close by making a few remarks. Let $\scT_0 = (\bT_0,\bT_0^{\star},\delta_0)$
be the witness to the duality occurring in the statement of
Theorem~\ref{UniqueJord}. In (1) of the theorem, we must choose an isomorphism
$\delta : X(\bT) \to \wc{X}(\bT^{\star})$ for the map $s \mapsto \hat s$ to be
defined. In other words, we must choose a witness $(\bT,\bT^{\ast},\delta)$ to
the duality of $(\bT,F)$ and $(\bT^*,F^*)$. In (4) and (5) of the statement, we
must choose a witness to the duality of $(\bL,F)$ and $(\bL^*,F^*)$ for the
bijection $J_s^{\bL}$ to be defined. We briefly recall how this is done, by
inheritence from $\scT_0$.

For any $(g,g^*) \in \bG \times \bG^*$ we may consider the tuple
\begin{equation*}
(g,g^*)\cdot\scT_0 
= 
({}^g\bT_0,{}^{g^*}\bT_0^{\star},\wc{X}(\Ad_{g^*})\circ\delta_0\circ X(\Ad_g)).
\end{equation*}
In general, this will not be a witness to the duality between $(\bG,F)$ and
$(\bG^*,F^*)$ because \eqref{eq:commute-frob} may not be satisfied. It is a
witness exactly when \eqref{eq:commute-frob} holds and this condition may be
rephrased in terms of the Weyl group as in \cite[Lem.~4.3.3]{Ca85}. In (1), (4),
and (5), we always assume the witness is of the form $(g,g^*)\cdot \scT_0$.

More precisely, saying that two Levi subgroups $\bL \leqslant \bG$ and $\bL^*
\leqslant \bG^*$ are dual means exactly that some $\scT = (g,g^*)\cdot\scT_0$ is
a witness to the duality between $(\bL,F)$ and $(\bL^*,F^*)$. If $\scT =
(\bT,\bT^*,\delta)$, then this is equivalent to requiring that: $\scT$ is a
witness to the duality between $(\bG,F)$ and $(\bG^*,F^*)$, $\bT \leqslant \bL$,
$\bT^* \leqslant \bL^*$, and $\delta(\Phi_{\bL}(\bT)) = \wc\Phi_{\bL^*}(\bT^*)$.

Finally we must explain how to choose witnesses in (6) and (7). In (6) we assume
that $\scT_1 = (\bT_1,\bT_1^{\ast},\delta_1)$ is a witness to the duality
between $(\bG_1,F_1)$ and $(\bG_1^{\ast},F_1^{\ast})$ satisfying $\varphi(\bT_0)
\leqslant \bT_1$. We then have $\varphi^{\ast}$ is dual to $\varphi$ in the
sense of Definition~\ref{def:dual-isotypy}. In (7) we assume $\bT = \prod_i
\bT_i$ and $\bT^{\ast} = \prod_i \bT_i^{\ast}$ are products of $F$-stable
maximal tori and $\delta = \prod_i \delta_i$ where $\delta_i : X(\bT_i) \to
\wc{X}(\bT_i^{\ast})$ is an isomorphism making $(\bT_i,\bT_i^{\ast},\delta_i)$ a
witness to the duality between $(\bG_i,F)$ and $(\bG_i^{\ast},F)$.

\section{Multiplicity Free Restrictions}\label{sec:multfree}
In this section, we use a difficult result of Lusztig on spin groups to show
that restriction from $G$ to $\rO^{p'}(G)$ is multiplicity free. From this, we
conclude Lusztig's multiplicity freeness result \cite[Prop.~10]{Lu88}. Namely,
this says that if $\bG \to \wt{\bG}$ is a regular embedding, as defined below,
then restriction from $\wt{G} = \wt{\bG}^F$ to $G$ is multiplicity free.

We note that Li \cite[Lem.~2.1]{Li23} has shown that the restriction map
$\res_{[G,G]}^G$ is multiplicity free assuming ``$q$ is large enough''. From the
proof in \cite{Li23} we see this is meant to mean that $\rO^{p'}(G) = [G,G]$.
The proof in \cite[Lem.~2.1]{Li23} relies on \cite[Prop.~10]{Lu88}, whereas our
proof does not utilise regular embeddings at all. We believe the approach taken
here may be of interest for other reductions in the future.

As in Lusztig's original approach \cite{Lu88}, we need to reduce to the case
where $\bG$ is simple and simply connected. It is not enough to prove the
statement in this case (which is trivial because $G = \rO^{p'}(G)$ when $\bG$ is
simply connected), so we need to prove a different statement. For this, we wish
to consider finite overgroups of $G$ contained in the normalizer $\rN_{\bG}(G)$.

It is well known that the centralizer $\rC_{\bG}(G) = \rZ(\bG)$ is the center of
$\bG$, see \cite[Lem.~6.1]{Bon00}. The normalizer may also be described in terms
of the center.

\begin{lemma}\label{lem:normalizer-is-lang-preimage}
We have $\rN_{\bG}(G) = \Lang^{-1}(\rZ(\bG))$.
\end{lemma}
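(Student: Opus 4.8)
The plan is to prove the two inclusions $\rN_{\bG}(G) \subseteq \Lang^{-1}(\rZ(\bG))$ and $\Lang^{-1}(\rZ(\bG)) \subseteq \rN_{\bG}(G)$ separately, using only the facts that $\Lang(g) = g^{-1}F(g)$, that $F$ fixes $G = \bG^F$ pointwise, and that $\rC_{\bG}(G) = \rZ(\bG)$ (recalled just above from \cite[Lem.~6.1]{Bon00}).

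For the inclusion $\Lang^{-1}(\rZ(\bG)) \subseteq \rN_{\bG}(G)$, suppose $g \in \bG$ with $\Lang(g) = g^{-1}F(g) = z \in \rZ(\bG)$, so $F(g) = gz$. For any $x \in G$ we have $F(\,{}^g x\,) = F(g)F(x)F(g)^{-1} = (gz)x(gz)^{-1} = {}^gx$ since $z$ is central and $F(x) = x$; hence ${}^gx \in \bG^F = G$. Thus ${}^gG \subseteq G$, and applying the same argument to $g^{-1}$ (noting $\Lang(g^{-1}) = gF(g^{-1}) = gF(g)^{-1} = g(gz)^{-1} = z^{-1} \in \rZ(\bG)$) gives ${}^{g^{-1}}G \subseteq G$, so ${}^gG = G$ and $g \in \rN_{\bG}(G)$.

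For the reverse inclusion $\rN_{\bG}(G) \subseteq \Lang^{-1}(\rZ(\bG))$, suppose $g \in \rN_{\bG}(G)$. Set $z := \Lang(g) = g^{-1}F(g)$; the goal is to show $z \in \rZ(\bG)$, and by $\rC_{\bG}(G) = \rZ(\bG)$ it suffices to show $z$ centralizes $G$. For any $x \in G$, since $g$ normalizes $G$ we have ${}^{g^{-1}}x \in G$, hence $F({}^{g^{-1}}x) = {}^{g^{-1}}x$. Unwinding, $F(g^{-1}xg) = g^{-1}xg$ means $F(g)^{-1}x F(g) = g^{-1} x g$ (using $F(x) = x$), i.e. $g F(g)^{-1} x F(g) g^{-1} = x$, i.e. $(g^{-1}F(g))^{-1}$ — more cleanly, rearranging gives $x \cdot gF(g)^{-1} = gF(g)^{-1} \cdot x$, so $gF(g)^{-1} = (F(g)g^{-1})^{-1}$ commutes with every $x \in G$; but $F(g)g^{-1} = g\,(g^{-1}F(g))\,g^{-1} = {}^g z$, so ${}^g z \in \rC_{\bG}(G) = \rZ(\bG)$, whence $z \in \rZ(\bG)$ as well since $\rZ(\bG)$ is normal (indeed central). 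Therefore $g \in \Lang^{-1}(\rZ(\bG))$.

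There is no serious obstacle here; the only point requiring care is keeping track of the conjugation and the application of $F$ correctly, and in the backward direction making sure one extracts that $z$ itself — not merely a conjugate of $z$ — lies in $\rZ(\bG)$, which is immediate since $\rZ(\bG)$ is stable under inner automorphisms. One should also note that $\Lang^{-1}(\rZ(\bG))$ is a well-defined subset since $\rZ(\bG) \subseteq \bG$, and that both sides visibly contain $G$ (for $g \in G$, $\Lang(g) = 1 \in \rZ(\bG)$), which is a useful sanity check.
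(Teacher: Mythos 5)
Your proof is correct and takes essentially the same approach as the paper: both inclusions follow by unwinding $\Lang(g) = g^{-1}F(g)$ together with the fact that $\rC_{\bG}(G) = \rZ(\bG)$. The paper's version is terser (in the forward direction it immediately passes from ${}^gx \in G$ to ${}^{F(g)}x = {}^gx$, and in the reverse direction it simply invokes $F$-stability of $\rZ(\bG)$), but the underlying computation is the same as yours.
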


\begin{proof}
If $g \in \rN_{\bG}(G)$, then for any $x \in G$ we have ${}^gx \in G$. In
partiular, ${}^{F(g)}x = {}^gx$ for any $x \in G$ so $\Lang(g) \in \rC_{\bG}(G)
= \rZ(\bG)$, hence $\rN_{\bG}(G) \leqslant \Lang^{-1}(\rZ(\bG))$. Since
$\rZ(\bG)$ is $F$-stable, $\Lang^{-1}(\rZ(\bG)) \leqslant \rN_{\bG}(G)$.
\end{proof}

Let us draw some conclusions from this equality. Firstly, the natural map
$\rN_{\bG}(G)/\rC_{\bG}(G) \to (\bG/\rZ(\bG))^F$ is an isomorphism, hence the
automizer $\rN_{\bG}(G)/\rC_{\bG}(G)$ is a finite group. The image of the
natural map $\rN_{\bG}(G) \to \Aut(G)$ is the group of inner diagonal
automorphisms, as defined in \cite[\S11.5]{dmbook2}.

Recall that $\bG = \bG_{\der}\cdot \rZ^{\circ}(\bG)$ , where $\bG_{\der}
\leqslant \bG$ is the derived subgroup of $\bG$ \cite[\S1.8]{Ca85}. If $\rN_{\bG_{\der}}(G) :=
\bG_{\der} \cap \rN_{\bG}(G)$ then we have a natural map $\rN_{\bG_{\der}}(G)
\to \rN_{\bG}(G)/\rC_{\bG}(G)$ whose kernel $\rC_{\bG_{\der}}(G) := \bG_{\der}
\cap \rC_{\bG}(G) = \rZ(\bG_{\der})$ is finite.

Finally, by the Lang--Steinberg Theorem, the Lang map defines an isomorphism of
abstract groups $\rN_{\bG}(G)/G \to \rZ(\bG)$. The image of the subgroup
$(G\cdot \rZ(\bG))/G$ is the image $\Lang(\rZ(\bG))$ of the Lang map. Hence, we
have an isomorphism
\begin{equation*}
\rN_{\bG}(G)/(G\cdot \rZ(\bG)) \cong \rZ(\bG)/\Lang(\rZ(\bG)).
\end{equation*}
As $G$ is finite, there is a bijection $A \mapsto \Lang^{-1}(A)$ between the
finite subgroups of $\rZ(\bG)$ and the finite overgroups $X \leqslant
\rN_{\bG}(G)$ of $G$.

\begin{lemma}\label{lem:overgroups-of-G}
If $G_{\der} = (\bG_{\der})^F$ then the following hold:
\begin{enumerate}[label={\normalfont(\roman*)},topsep=5pt,itemsep=5pt]
    \item $\rN_{\bG_{\der}}(G) = \rN_{\bG_{\der}}(G_{\der})$ is finite and
    $\rN_{\bG}(G) = \rN_{\bG_{\der}}(G) \cdot \rC_{\bG}(G)$,
    \item $\Lang^{-1}(\rZ(\bG_{\der})) = G\cdot \rN_{\bG_{\der}}(G)$ is a finite
    overgroup of $G$,
    \item if $X \leqslant \rN_{\bG}(G)$ is a finite overgroup of $G$, then $X
    \leqslant \rN_{\bG_{\der}}(G) \cdot Z$ for some finite subgroup $Z \leqslant
    \rC_{\bG}(G)$.
\end{enumerate}
\end{lemma}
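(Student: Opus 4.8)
The plan is to derive all three statements from the description $\rN_{\bG}(G) = \Lang_{\bG}^{-1}(\rZ(\bG))$ of Lemma~\ref{lem:normalizer-is-lang-preimage} --- applied both to $(\bG,F)$ and to the connected reductive group $(\bG_{\der},F)$ --- combined with the decomposition $\bG = \bG_{\der}\cdot\rZ^{\circ}(\bG)$ and the fact that $\bG_{\der}\cap\rZ(\bG) = \rZ(\bG_{\der})$ is finite. The basic mechanism throughout is that $\Lang_{\bG_{\der}}$ is simply the restriction of $\Lang_{\bG}$ to $\bG_{\der}$, so that for $g\in\bG_{\der}$ one has $\Lang_{\bG}(g)\in\bG_{\der}$ and hence $\Lang_{\bG}(g)\in\rZ(\bG)$ if and only if $\Lang_{\bG}(g)\in\rZ(\bG_{\der})$; in particular $\bG_{\der}\cap\Lang_{\bG}^{-1}(\rZ(\bG)) = \Lang_{\bG_{\der}}^{-1}(\rZ(\bG_{\der}))$.

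For (i), this last identity reads $\rN_{\bG_{\der}}(G) = \bG_{\der}\cap\rN_{\bG}(G) = \Lang_{\bG_{\der}}^{-1}(\rZ(\bG_{\der}))$, and applying Lemma~\ref{lem:normalizer-is-lang-preimage} to $(\bG_{\der},F)$ identifies the right-hand side with $\rN_{\bG_{\der}}(G_{\der})$; finiteness is immediate since $\rZ(\bG_{\der})$ is finite and the fibres of $\Lang_{\bG_{\der}}$ are cosets of the finite group $G_{\der}$. For the factorization $\rN_{\bG}(G) = \rN_{\bG_{\der}}(G)\cdot\rC_{\bG}(G)$ the inclusion $\supseteq$ is clear; for $\subseteq$ I write $g = g_{1}z$ with $g_{1}\in\bG_{\der}$ and $z\in\rZ^{\circ}(\bG)\leqslant\rZ(\bG) = \rC_{\bG}(G)$, use centrality of $z$ to get $\Lang_{\bG}(g) = \Lang_{\bG}(g_{1})\Lang_{\bG}(z)$, and conclude $\Lang_{\bG}(g_{1})\in\rZ(\bG)\cap\bG_{\der} = \rZ(\bG_{\der})$, so that $g_{1}\in\rN_{\bG_{\der}}(G)$.

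For (ii), since $\rZ(\bG_{\der})$ is a finite subgroup of $\rZ(\bG)$, the discussion preceding the lemma shows $\Lang_{\bG}^{-1}(\rZ(\bG_{\der}))$ is a finite overgroup of $G$. That $G\cdot\rN_{\bG_{\der}}(G)$ is a subgroup follows since $G$ normalizes $\rN_{\bG}(G)$, hence $\bG_{\der}\cap\rN_{\bG}(G) = \rN_{\bG_{\der}}(G)$. One inclusion is the computation $\Lang_{\bG}(gh) = h^{-1}F(h) = \Lang_{\bG_{\der}}(h)\in\rZ(\bG_{\der})$ for $g\in G$, $h\in\rN_{\bG_{\der}}(G)$; for the reverse, given $g$ with $\Lang_{\bG}(g)\in\rZ(\bG_{\der})$ I use Lang--Steinberg inside the connected group $\bG_{\der}$ to produce $h\in\bG_{\der}$ with $\Lang_{\bG_{\der}}(h) = \Lang_{\bG}(g)$, note $h\in\rN_{\bG_{\der}}(G)$, and check that $F(gh^{-1}) = gh^{-1}$, so $g = (gh^{-1})h\in G\cdot\rN_{\bG_{\der}}(G)$.

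For (iii) I would pass to the torus quotient $\pi\colon\bG\to\bG/\bG_{\der}$. Since $\bG = \bG_{\der}\cdot\rZ^{\circ}(\bG)$, the restriction $\pi|_{\rZ^{\circ}(\bG)}$ is surjective with finite kernel $\rZ^{\circ}(\bG)\cap\bG_{\der}$, and as $X$ is finite its image $\pi(X)$ is a finite subgroup of $\bG/\bG_{\der}$; I take $Z$ to be the preimage of $\pi(X)$ under $\pi|_{\rZ^{\circ}(\bG)}$, which is then a finite subgroup of $\rZ^{\circ}(\bG)\leqslant\rZ(\bG) = \rC_{\bG}(G)$. Given $x\in X$, choosing $z\in\rZ^{\circ}(\bG)$ with $\pi(z) = \pi(x)$ (so $z\in Z$) gives $xz^{-1}\in\ker\pi = \bG_{\der}$, while also $xz^{-1}\in\rN_{\bG}(G)$ because $z\in\rC_{\bG}(G)\leqslant\rN_{\bG}(G)$; hence $xz^{-1}\in\bG_{\der}\cap\rN_{\bG}(G) = \rN_{\bG_{\der}}(G)$ and $x\in\rN_{\bG_{\der}}(G)\cdot Z$. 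I expect this last point --- arranging for the central correction term $Z$ to be \emph{finite} rather than all of $\rZ^{\circ}(\bG)$ --- to be the only genuine (and still mild) obstacle; the quotient $\bG/\bG_{\der}$ supplies it precisely because the finite group $X$ has finite image there. Everything else is bookkeeping with Lemma~\ref{lem:normalizer-is-lang-preimage} and keeping track of which Lang map is in play.
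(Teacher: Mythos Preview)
Your proof is correct and follows essentially the same approach as the paper: both derive everything from Lemma~\ref{lem:normalizer-is-lang-preimage} together with $\bG = \bG_{\der}\cdot\rZ^{\circ}(\bG)$ and finiteness of $\rZ(\bG_{\der})$. The only cosmetic difference is in (iii), where the paper pulls back the finite group $(X\cdot\rN_{\bG_{\der}}(G))/\rN_{\bG_{\der}}(G)$ along the surjection $\rC_{\bG}(G)\twoheadrightarrow\rN_{\bG}(G)/\rN_{\bG_{\der}}(G)$ (whose kernel is $\rZ(\bG_{\der})$), whereas you pull back $\pi(X)$ along $\pi|_{\rZ^{\circ}(\bG)}\colon\rZ^{\circ}(\bG)\twoheadrightarrow\bG/\bG_{\der}$; both are the same ``finite image through a map with finite kernel'' trick.
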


\begin{proof}
(i). As $\rZ(\bG_{\der}) \leqslant \rZ(\bG)$ we have by
Lemma~\ref{lem:normalizer-is-lang-preimage} that 
\begin{equation*}
\rN_{\bG_{\der}}(G) = \bG_{\der} \cap \Lang^{-1}(\rZ(\bG)) = \bG_{\der} \cap
\Lang^{-1}(\rZ(\bG_{\der})) = \rN_{\bG_{\der}}(G_{\der})
\end{equation*}
which is finite since $\rZ(\bG_{\der})$ is. That $\rN_{\bG}(G) = \rN_{\bG_{\der}}(G) \cdot
\rC_{\bG}(G)$ follows from the fact that $\bG = \bG_{\der} \cdot
\rZ^{\circ}(\bG)$.

(ii). By the Lang--Steinberg Theorem $\Lang(\rN_{\bG_{\der}}(G)) =
\rZ(\bG_{\der})$ and the fiber of the Lang map over any point is a coset of the
finite group $G$ in $\bG$. This gives the equality.

(iii). We may take $Z$ to be the inverse image of the finite group
$(X\cdot \rN_{\bG_{\der}}(G))/\rN_{\bG_{\der}}(G)$ under the natural surjective
map $\rC_{\bG}(G) \twoheadrightarrow \rN_{\bG}(G)/\rN_{\bG_{\der}}(G)$.
\end{proof}

Recall that a morphism of algebraic groups $\phi : \bG \to \bG'$ is an
\emph{isotypy} if $\bG_{\der}' \leqslant \phi(\bG)$ and $\ker(\phi) \leqslant
\rZ(\bG)$. An isotypy $\phi : (\bG,F) \to (\bG',F')$ is an isotypy $\phi : \bG
\to \bG'$ satisfying $\phi\circ F = F'\circ\phi$. When using isotypies, we will
always assume that both $\bG$ and $\bG'$ are connected reductive algebraic
groups. This has the consequence that $\bG' = \phi(\bG)\cdot\rZ^{\circ}(\bG')$
for any isotypy $\phi : \bG \to \bG'$.

It will be useful at several points to have some properties regarding the
normalizer $\rN_{\bG}(G)$ with respect to isotypies.

\begin{lemma}\label{lem:isotypy-der-diag}
If $\phi : (\bG,F) \to (\bG',F')$ is an isotypy and $G = \bG^F$ and $G' =
\bG'^{F'}$ then the following hold:
\begin{enumerate}[label={\normalfont(\roman*)},topsep=5pt,itemsep=5pt]
    \item $\phi(\bG_{\der}) = \bG_{\der}'$,
    \item $\phi^{-1}(\rZ(\bG')) = \rZ(\bG)$,
    \item $\phi(\rN_{\bG}(G)) = \rN_{\phi(\bG)}(G')$,
    \item $\phi(\Lang_{\bG,F}^{-1}(\rZ(\bG_{\der}))) =
    \Lang_{\phi(\bG),F'}^{-1}(\rZ(\bG_{\der}'))$.
\end{enumerate}
\end{lemma}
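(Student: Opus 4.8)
The plan is to prove the four assertions of Lemma~\ref{lem:isotypy-der-diag} in order, leveraging the structural description $\bG' = \phi(\bG)\cdot\rZ^{\circ}(\bG')$ that was recorded immediately before the statement, together with Lemma~\ref{lem:normalizer-is-lang-preimage}. For (i): an isotypy restricts to a surjection $\bG_{\der} \to \bG'_{\der}$ because $\phi$ is a morphism of algebraic groups, so $\phi(\bG_{\der}) = \phi(\bG)_{\der}$, and the hypothesis $\bG'_{\der} \leqslant \phi(\bG)$ together with $\bG' = \phi(\bG)\cdot\rZ^{\circ}(\bG')$ forces $\phi(\bG)_{\der} = \bG'_{\der}$ (the derived subgroup of $\phi(\bG)$ contains $\bG'_{\der}$ since $\bG'_{\der} \leqslant \phi(\bG)$ and $\bG'_{\der}$ is perfect, and it is contained in $\bG'_{\der}$ trivially).

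For (ii): the inclusion $\ker\phi \leqslant \rZ(\bG)$ and $\phi(\bG) \supseteq \bG'_{\der}$ give, on the one hand, $\phi(\rZ(\bG)) \leqslant \rZ(\phi(\bG))$; and since $\bG' = \phi(\bG)\cdot\rZ^{\circ}(\bG')$, an element centralizing $\phi(\bG)$ and lying in $\bG'$ (which always centralizes $\rZ^{\circ}(\bG')$) centralizes $\bG'$, so $\phi(\rZ(\bG)) \leqslant \rZ(\bG')$, i.e.\ $\rZ(\bG) \leqslant \phi^{-1}(\rZ(\bG'))$. Conversely, if $\phi(g) \in \rZ(\bG')$ then $g$ centralizes $\bG_{\der}$ (as $\phi$ is injective modulo the central $\ker\phi$, which already centralizes everything in sight, and $\phi(g)$ commutes with $\phi(\bG_{\der}) = \bG'_{\der}$); since $g$ automatically centralizes $\rZ^{\circ}(\bG)$ and $\bG = \bG_{\der}\cdot\rZ^{\circ}(\bG)$, we get $g \in \rZ(\bG)$. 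This needs the small observation that $[g,\bG_{\der}] \leqslant \ker\phi \cap \bG_{\der} \leqslant \rZ(\bG_{\der})$ is both central in $\bG_{\der}$ and (being a set of commutators) contained in $\bG_{\der}$; a standard argument (e.g.\ via the fact that $\bG_{\der}$ is perfect and a central extension commutator computation, or just that conjugation by $g$ is an algebraic automorphism of $\bG_{\der}$ trivial modulo the center, hence inner-diagonal, hence trivial on $\bG_{\der}$ itself since $\rZ(\bG_{\der})$ is finite and $\bG_{\der}$ connected) shows $g$ actually centralizes $\bG_{\der}$.

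For (iii): compatibility of $\phi$ with $F$ and $F'$ together with Lemma~\ref{lem:normalizer-is-lang-preimage} applied to both $(\bG,F)$ and $(\phi(\bG),F')$ — noting $G' = \bG'^{F'}$ and $(\phi(\bG))^{F'}$ may differ, so one should check $\rN_{\phi(\bG)}(G') = \phi(\bG) \cap \Lang_{\bG',F'}^{-1}(\rZ(\bG'))$ directly from the definition of the normalizer of $G'$ inside $\phi(\bG)$ — gives $\phi(\rN_{\bG}(G)) = \phi(\Lang_{\bG,F}^{-1}(\rZ(\bG))) = \Lang_{\phi(\bG),F'}^{-1}(\phi(\rZ(\bG)))$ using that $\phi$ intertwines the Lang maps and is surjective onto $\phi(\bG)$ with central (hence $F$-stable, $\Lang$-compatible) kernel; then (ii) identifies $\phi(\rZ(\bG))$ with $\rZ(\bG') \cap \phi(\bG) = \rZ(\phi(\bG))$, giving the claim. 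For (iv): argue identically but with $\rZ(\bG_{\der})$ in place of $\rZ(\bG)$, using (i) to see $\phi(\rZ(\bG_{\der})) = \rZ(\bG'_{\der})$ (same reasoning as (ii), restricted to derived subgroups, where $\phi|_{\bG_{\der}}$ is an isogeny so has finite central kernel), and the $F$-equivariance to push the Lang-map preimage through $\phi$. The main obstacle I expect is the careful bookkeeping in (ii)–(iv): one must track that $\phi$ need not be surjective onto $\bG'$, only onto $\phi(\bG)$, so all Lang-map and centralizer statements on the target side must be phrased inside $\phi(\bG)$ (or with $G'$ replaced by $(\phi(\bG))^{F'}$ where appropriate) before invoking $\bG' = \phi(\bG)\cdot\rZ^{\circ}(\bG')$ to transfer back; and the commutator argument showing $\phi^{-1}(\rZ(\bG')) \subseteq \rZ(\bG)$ requires knowing that an algebraic automorphism of a connected semisimple group trivial on the (finite) center is the identity, which is standard but worth citing.
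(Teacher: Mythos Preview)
Your proposal is correct and follows essentially the same route as the paper. The paper's argument for (ii) is slightly cleaner: rather than restricting attention to $\bG_{\der}$ and passing through automorphisms, it observes directly that if $\phi(g) \in \rZ(\bG')$ then the commutator $[g,-] : \bG \to A := \ker(\phi)\cap\bG_{\der}$ is a morphism of varieties with connected source and finite target, hence trivial, so $g \in \rZ(\bG)$. This bypasses the detour through ``inner-diagonal'' automorphisms (which, incidentally, is not the right notion here; what you actually use at the end is precisely the connected-to-finite argument). For (iii) and (iv) the paper argues in the same spirit you do, via $\phi\circ\Lang_{\bG,F} = \Lang_{\bG',F'}\circ\phi$ together with (ii) and (i), though it only spells out the harder inclusion and leaves the easy one implicit.
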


\begin{proof}
(i). As $\bG' = \phi(\bG)\cdot \rZ^{\circ}(\bG')$ we have $\bG'_{\der} =
\phi(\bG)_{\der} = \phi(\bG_{\der})$.

(ii). Let $A = \ker(\phi) \cap \bG_{\der} \leqslant \rZ(\bG_{\der}) \leqslant
\rZ(\bG)$. If $g \in \phi^{-1}(\rZ(\bG'))$ then the commutator $[g,-] : \bG \to
A$ defines a morphism of algebraic groups. Now $[g,\bG] \leqslant A$ is
connected, because $\bG$ is, and $A$ is finite so $g \in \rZ(\bG)$. This shows
that $\phi^{-1}(\rZ(\bG')) \leqslant \rZ(\bG)$ and equality holds because $\bG'
= \phi(\bG)\cdot \rZ^{\circ}(\bG')$.

(iii). Assume $g \in \bG$ is such that $\phi(g) \in \rN_{\bG'}(G')$. Then
$\phi(\Lang_{\bG,F}(g)) = \Lang_{\bG',F'}(\phi(g)) \in \rZ(\bG')$ and so
$\Lang_{\bG,F}(g) \in \rZ(\bG)$ by (ii), which shows that $g \in \rN_{\bG}(G)$.

(iv). Using (i) we may argue exactly as in (iii).
\end{proof}

We next consider extendability in the case of semisimple groups.

\begin{proposition}\label{thm:ext-stab-Op'-der}
If $\bG = \bG_{\der}$ is semisimple, then any character $\chi \in
\irr(\rO^{p'}(G))$ extends to its stabilizer $\rN_{\bG}(G)_{\chi}$ in the finite
group $\rN_{\bG}(G)$.
\end{proposition}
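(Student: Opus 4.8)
The plan is to reduce to the simply connected case, where the statement becomes one about Schur multipliers of finite simple groups, and to apply Lusztig's results on the (non-)triviality of the relevant cohomological obstruction. Write $\bG_{\simc} \to \bG$ for a simply connected covering of $\bG = \bG_{\der}$, let $H = \bG_{\simc}^F$, and recall that the image of $H$ in $G$ is precisely $\rO^{p'}(G)$. Thus a character $\chi \in \irr(\rO^{p'}(G))$ may be inflated to $H$ (or rather pulled back along the isogeny), and our task is to extend $\chi$ from $H$ to the finite overgroup $X = \rN_{\bG}(G)_\chi$ sitting inside $\rN_{\bG_{\simc}}(H)$ — here I use Lemma~\ref{lem:normalizer-is-lang-preimage} and Lemma~\ref{lem:overgroups-of-G} to see that $\rN_{\bG}(G) = \rN_{\bG_{\der}}(G)\cdot \rC_{\bG}(G)$ and that, modulo the central factor which acts by scalars and so causes no obstruction to extension, the relevant normalizer is $\Lang^{-1}(\rZ(\bG_{\der}))$, a finite group. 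So the real content is extendability from $H = \bG_{\simc}^F$ to finite overgroups inside $\rN_{\bG_{\simc}}(H) = \Lang_{\bG_{\simc},F}^{-1}(\rZ(\bG_{\simc}))$.

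Next I would decompose $\bG_{\simc}$ into its $F$-simple factors using property-(7)-style multiplicativity: both $H$ and its normalizer-overgroups factor as direct products over the $F$-orbits of simple factors, and a character extends to a product of groups iff each tensor factor extends, so we reduce to $(\bG,F)$ $F$-simple and $\bG$ simply connected. Restriction of scalars (the Shintani/Steinberg reduction for $F$-simple groups) then lets us assume $\bG$ is simple and simply connected, with $H = \bG^F$ (quasi)simple. At this point $\rN_{\bG}(H)/H \cong \rZ(\bG)$ is abelian, and the obstruction to extending $\chi \in \irr(H)$ to its stabilizer $X$ in this abelian extension is a class in $H^2$ of a cyclic-by-... quotient; since the quotient $X/H$ is a subgroup of $\rZ(\bG)$, which is the Schur multiplier (or a quotient of it) of the simple group $H/\rZ(H)$ in the relevant cases, the extendability is controlled by the known structure of these Schur multipliers together with Lusztig's computations. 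Concretely, I expect to invoke the difficult result of Lusztig on spin groups alluded to in the section introduction — this is the case where $H$ is a spin group of type $D$ and $\rZ(\bG)$ has order $4$, the one genuinely delicate situation — and to dispatch all other types by the fact that $\rZ(\bG)$ is cyclic, for which extension along an abelian (indeed cyclic) overgroup with abelian stabilizer-quotient is automatic by the standard criterion (a character extends to a cyclic-over extension of its stabilizer precisely when... it always does, by Gallagher/clifford theory for cyclic quotients, as the obstruction cocycle in $H^2(\text{cyclic},\CC^\times) = 0$).

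The main obstacle is exactly the type $D$ spin group case, where $\rZ(\bG) \cong \ZZ/4$ or $\ZZ/2 \times \ZZ/2$ and the overgroup $X/H$ can be a Klein four group, so the vanishing-$H^2$ shortcut for cyclic quotients does not directly apply; one must know that the specific $2$-cocycle arising from the $H$-action on the line spanned by a fixed vector in the representation affording $\chi$ is a coboundary. This is precisely the content of the unpublished/hard result of Lusztig on spin groups that the authors cite, and I would structure the proof so that this is the single external input, with everything else — the reduction to $F$-simple, to simple simply connected, the handling of the central factor $\rC_{\bG}(G)$, and the cyclic cases — being elementary Clifford theory combined with Lemmas~\ref{lem:normalizer-is-lang-preimage}, \ref{lem:overgroups-of-G}, and \ref{lem:isotypy-der-diag}.
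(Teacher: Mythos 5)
Your proposal follows the same chain of reductions as the paper's proof: pull back along the simply connected covering, split into $F$-simple factors, pass to the quasisimple simply connected case via the projection/restriction-of-scalars argument, settle every case where the quotient is cyclic by the vanishing of the $H^2$-obstruction, and invoke Lusztig's result for the one remaining case (split $F$, $\bG = \mathrm{Spin}_{4n}$, $q$ odd, where the quotient is a Klein four-group). One small slip worth noting: the relevant quotient is $\rN_{\bG}(G)/G \cong \rZ(\bG)/\Lang(\rZ(\bG))$, not $\rZ(\bG)$ itself, but since the former is a quotient of the latter this does not affect the cyclicity argument.
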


\begin{proof}
Let $\phi : \bG_{\simc} \twoheadrightarrow \bG$ be a simply connected covering
map and write $G_{\simc}:=\bG_{\simc}^F$. Here we have $F$ extends uniquely to a
Frobenius endomorphism $F : \bG_{\simc} \to \bG_{\simc}$, see \cite[9.16]{St68}
and the remarks in \cite[9.17]{St68}. As $\phi$ is a bijection on unipotent
elements $\phi(\rO^{p'}(G_{\simc})) = \rO^{p'}(G)$. By
Lemma~\ref{lem:isotypy-der-diag} $\phi(\rN_{\bG_{\simc}}(G_{\simc})) =
\rN_{\bG}(G)$ so if $\psi := \tw{\top}\phi(\chi) \in \irr(\rO^{p'}(G_{\simc}))$
extends to its stabilizer $\rN_{\bG_{\simc}}(G_{\simc})_{\psi}$ then deflating
this extension gives an extension of $\chi$ to $\rN_{\bG}(G)_{\chi}$.

Therefore, we can assume that $\bG = \bG_{\simc}$ is simply connected, which
means $G = \rO^{p'}(G)$ by a theorem of Steinberg \cite[Thm.~12.4]{St68}. Suppose
$\bG = \bG^{(1)} \times \cdots \times \bG^{(r)}$ is an $F$-stable decomposition,
where $\bG^{(i)}$ is a product of quasisimple groups. As $\rN_{\bG}(G) =
\rN_{\bG^{(1)}}(G^{(1)}) \times \cdots \times \rN_{\bG^{(r)}}(G^{(r)})$, we may
assume $\bG = \bG_1 \times \cdots \times \bG_n$ is a product of quasisimple
groups permuted transitively by $F$.

Let $\pi : \bG \twoheadrightarrow \bG_1$ be the natural projection map. If $g
\in \rN_{\bG}(G)$, then
\begin{equation*}
    g\rZ(\bG) \in (\bG/\rZ(\bG))^F 
    \leqslant 
    (\bG/\rZ(\bG))^{F^n} 
    \cong
    (\bG_1/\rZ(\bG_1))^{F^n} \times \cdots \times (\bG_n/\rZ(\bG_n))^{F^n}
\end{equation*}
so $\pi(g) \in \rN_{\bG_1}(G_1)$ where $G_1 = \bG_1^{F^n}$. Moreover, if $h \in
\rN_{\bG_1}(G_1)$ then $hF(h)\cdots F^{n-1}(h) \in \rN_{\bG}(G)$ so $\pi$
restricts to a surjective homomorphism $\rN_{\bG}(G) \twoheadrightarrow
\rN_{\bG_1}(G_1)$ which further restricts to an isomorphism $G
\overset{\sim}{\to} G_1$. Identifying $\chi$ with a character of $G_1$, we see
that if $\chi$ extends to $\rN_{\bG_1}(G_1)_{\chi}$ then inflating we get an
extension of $\chi$ to $\rN_{\bG}(G)$. Hence, we can assume that $\bG$ is
quasisimple and simply connected.

Now if the quotient $\rN_{\bG}(G)/G \cong \rZ(\bG)/\Lang(\rZ(\bG))$ is cyclic,
then $\chi$ will extend to its stabilizer. This is the case unless $F$ is split,
$\bG = \mathrm{Spin}_{4n}(\FF)$ is a spin group, and $q$ is odd. But this very
tricky case has been dealt with by a counting argument due to Lusztig. A
detailed proof of this statement appears in \cite[Thm.~5.11]{CaEn04} and
\cite{Lu08}.
\end{proof}

With this, we can now establish the desired extendibility statement for any
finite reductive group.

\begin{theorem}\label{thm:ext-stab-Op'}
If $X \leqslant \rN_{\bG}(G)$ is a finite overgroup of $\rO^{p'}(G)$, then any
character $\chi \in \irr(\rO^{p'}(G))$ extends to its stabiliser $X_{\chi}$.
\end{theorem}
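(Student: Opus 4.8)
The plan is to reduce Theorem~\ref{thm:ext-stab-Op'} to Proposition~\ref{thm:ext-stab-Op'-der}, which handles the semisimple case, by peeling off the central part of the overgroup $X$. Write $N = \rN_{\bG}(G)$ and $N_{\der} = \rN_{\bG_{\der}}(G)$. By Lemma~\ref{lem:overgroups-of-G}(iii), any finite overgroup $X \leqslant N$ of $\rO^{p'}(G)$ is contained in $N_{\der}\cdot Z$ for some finite subgroup $Z \leqslant \rC_{\bG}(G) = \rZ(\bG)$; enlarging $X$ only makes extendibility harder, so I would first replace $X$ by $N_{\der}\cdot Z$ and it suffices to extend $\chi$ to $(N_{\der}\cdot Z)_\chi$.

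The key point is then to reduce the ambient group from $\bG$ to $\bG_{\der}$. Since $\bG_{\der}$ is semisimple and $\rO^{p'}(G) = \rO^{p'}(G_{\der}) \leqslant G_{\der} = (\bG_{\der})^F$ (because unipotent elements and commutators live in the derived subgroup, and $\rO^{p'}$ is generated by $p$-elements), Proposition~\ref{thm:ext-stab-Op'-der} applied to $\bG_{\der}$ gives that $\chi$ extends to its stabilizer $(N_{\der})_\chi$ in the finite group $N_{\der} = \rN_{\bG_{\der}}(G)$. It remains to extend this character further across the central factor $Z$. Here I would use that $Z \leqslant \rZ(\bG)$ is central in $\bG$, hence commutes with everything in $N$; in particular $Z$ normalizes $\rO^{p'}(G)$ and acts trivially on it by conjugation, so $Z \leqslant N_\chi$ automatically and $\chi$ is $Z$-invariant. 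The subgroup $N_{\der}\cdot Z$ is a central product of $N_{\der}$ and $Z$ over their (finite, central) intersection $N_{\der}\cap Z \leqslant \rZ(\bG_{\der})$, so once $\chi$ is extended to $(N_{\der})_\chi$ one extends across $Z$ by choosing any linear character of $Z$ agreeing on the overlap with the already-chosen extension — this is possible because $Z/(N_{\der}\cap Z)$ embeds in the abelian group $\rZ(\bG)/\rZ(\bG_{\der})$, so the restriction map on linear characters is surjective.

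The main obstacle I expect is making the last gluing step precise: one must check that the extension produced by Proposition~\ref{thm:ext-stab-Op'-der} on $(N_{\der})_\chi$ and a chosen linear character of $Z$ are genuinely compatible on the intersection, and that $(N_{\der}\cdot Z)_\chi = (N_{\der})_\chi \cdot Z$ so that the central product structure descends to the stabilizer. Both follow from the fact that $Z$ is central in $\bG$ — so conjugation by $Z$ is trivial on $G$ and on $N_{\der}$-cosets modulo the center — but the bookkeeping with the finite central intersection $N_{\der}\cap Z$ requires care. A clean way to package it is: a character of a central product $A\ast_C B$ restricting to a given $\alpha \in \irr(A)$ exists provided $\alpha|_C$ extends to $B$, which here reduces to extending a linear character of the finite abelian group $N_{\der}\cap Z$ to the finite abelian group $Z$, always possible. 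With that lemma in hand the theorem follows immediately from Proposition~\ref{thm:ext-stab-Op'-der} and Lemma~\ref{lem:overgroups-of-G}.
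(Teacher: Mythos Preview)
Your proposal is correct and follows essentially the same route as the paper: reduce to $\hat G = \rN_{\bG_{\der}}(G)\cdot Z$ via Lemma~\ref{lem:overgroups-of-G}(iii), extend $\chi$ to $(\rN_{\bG_{\der}}(G))_\chi$ using Proposition~\ref{thm:ext-stab-Op'-der} (noting $\rN_{\bG_{\der}}(G)=\rN_{\bG_{\der}}(G_{\der})$), and then glue across the central factor $Z$ by choosing a compatible linear character. The paper packages the gluing step via the surjective product map $H_\chi\times Z\to \hat G_\chi$ and deflation of $\hat\chi\boxtimes\eta$, which is exactly your central-product argument written out explicitly; your observation that $(\rN_{\bG_{\der}}(G)\cdot Z)_\chi = (\rN_{\bG_{\der}}(G))_\chi\cdot Z$ (because $Z$ acts trivially on $\rO^{p'}(G)$) is precisely what makes that product map land in $\hat G_\chi$.
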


\begin{proof}
Let $H = \rN_{\bG_{\der}}(G)$. Then by Lemma~\ref{lem:overgroups-of-G}, we have
$X \leqslant \hat{G} := HZ$ for some finite subgroup $Z \leqslant \rC_{\bG}(G) =
\rZ(\bG)$. It  suffices to show that $\chi$ extends to its stabilizer
$\hat{G}_{\chi}$. As $Z$ centralizes $H$, we have the product map $\pi :
H_{\chi} \times Z \twoheadrightarrow \hat{G}_{\chi}$ is a surjective group
homomorphism. By Proposition~\ref{thm:ext-stab-Op'-der}, $\chi$ has an extension
$\hat{\chi} \in \irr(H_{\chi})$ because $H = \rN_{\bG_{\der}}(G_{\der})$ by
Lemma~\ref{lem:overgroups-of-G}.

Now $H_{\chi} \cap Z \leqslant \rZ(\hat{G})$ so $\res_{H_{\chi} \cap
Z}^{H_{\chi}}(\hat{\chi}) = \hat{\chi}(1)\lambda$ for a unique $\lambda \in
\irr(H_{\chi} \cap Z)$. If $\eta \in \irr(Z)$ is an extension of $\lambda^{-1}$,
which exists because $Z$ is abelian, then we obtain an irreducible character
$\psi = \hat{\chi} \boxtimes \eta \in \irr(H_{\chi}\times Z)$ with $\ker(\pi)
\leqslant \ker(\psi)$. Deflating $\psi$ gives an extension of $\chi$ to
$\hat{G}_{\chi}$.
\end{proof}

\begin{theorem}\label{thm:multfree}
If $X \leqslant Y \leqslant \rN_{\bG}(G)$ are finite overgroups of
$\rO^{p'}(G)$, then restriction from $Y$ to $X$ is multiplicity free and every
character $\chi \in \irr(X)$ extends to its stabiliser $Y_{\chi}$.
\end{theorem}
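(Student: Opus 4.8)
\textbf{Proof plan.}
Write $N := \rO^{p'}(G)$. Since $N$ is characteristic in $G$ and $G \trianglelefteq \rN_{\bG}(G)$, it is normal in $Y$ (and in $X$). The plan is to first establish a structural claim — that $Y/N$ is abelian for \emph{every} finite overgroup $Y$ of $N$ inside $\rN_{\bG}(G)$ — and then to deduce the theorem by a routine Clifford-theoretic argument built on Theorem~\ref{thm:ext-stab-Op'}.

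For the structural claim I would argue that on finite subgroups the commutator subgroup of $\rN_{\bG}(G)$ lands in $N$. Since $\rN_{\bG}(G) = \rN_{\bG_{\der}}(G)\cdot\rC_{\bG}(G)$ by Lemma~\ref{lem:overgroups-of-G}(i), and $\rC_{\bG}(G) = \rZ(\bG)$ is central in $\bG \geqslant \rN_{\bG}(G)$, writing each element of $Y$ as a product of an element of $\rN_{\bG_{\der}}(G)$ and a central one shows $[Y,Y] \leqslant [\rN_{\bG_{\der}}(G),\rN_{\bG_{\der}}(G)] = [\rN_{\bG_{\der}}(G_{\der}),\rN_{\bG_{\der}}(G_{\der})]$. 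Now let $\pi : \bG_{\simc} \twoheadrightarrow \bG_{\der}$ be a simply connected cover with $F$ lifted; this is an isotypy, so $\pi(\rN_{\bG_{\simc}}(G_{\simc})) = \rN_{\bG_{\der}}(G_{\der})$ by Lemma~\ref{lem:isotypy-der-diag}(iii), and $\pi$ carries $\rO^{p'}(G_{\simc})$ onto $\rO^{p'}(G_{\der})$ because $\pi$ is bijective on unipotent elements. Hence $\rN_{\bG_{\der}}(G_{\der})/\rO^{p'}(G_{\der})$ is a quotient of $\rN_{\bG_{\simc}}(G_{\simc})/\rO^{p'}(G_{\simc})$; but $G_{\simc} = \rO^{p'}(G_{\simc})$ by Steinberg's theorem and $\rN_{\bG_{\simc}}(G_{\simc})/G_{\simc} \cong \rZ(\bG_{\simc})$ is abelian, so this quotient is abelian. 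As $\rO^{p'}(G_{\der}) \leqslant \rO^{p'}(G) = N$, we get $[Y,Y] \leqslant N$. In particular $X \trianglelefteq Y$ with $Y/X$ abelian, and for every $N \leqslant M \leqslant Y$ the group $Y_\chi/N$ acts trivially by conjugation on $\irr(M/N)$.

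For the Clifford-theoretic half, recall that by Theorem~\ref{thm:ext-stab-Op'} every $\chi \in \irr(N)$ extends to its stabiliser in $Y$ (and, compatibly, in $X$). Given $\rho \in \irr(Y)$, choose $\chi$ under $\rho$ and an extension $\tilde\chi \in \irr(Y_\chi)$ — automatically $Y_\chi$-invariant — so Gallagher gives $\rho = \ind_{Y_\chi}^Y(\tilde\chi\lambda)$ with $\lambda \in \irr(Y_\chi/N)$ \emph{linear} since $Y_\chi/N$ is abelian. Expanding $\res_X^Y\rho$ by Mackey, each double-coset term $\ind_{X\cap Y_\chi^y}^X\res((\tilde\chi\lambda)^y)$ is the induction from the $X$-stabiliser of $\chi^y$ of an extension of $\chi^y$ twisted by a linear character, hence irreducible; and distinct double cosets give $\chi^y$ in distinct $X$-orbits, so these irreducibles are pairwise distinct. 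Thus $\res_X^Y\rho$ is multiplicity free. For the extension statement, given $\tau \in \irr(X)$ choose $\chi$ under $\tau$, write $\tau = \ind_{X_\chi}^X(\hat\chi\gamma)$ with $\hat\chi := \tilde\chi|_{X_\chi}$ and $\gamma \in \irr(X_\chi/N)$ linear, extend $\gamma$ to a linear character $\tilde\gamma$ of $Y_\chi/N$ (possible as $Y_\chi/N$ is abelian), and set $\theta := \tilde\chi\tilde\gamma \in \irr(Y_\chi)$. A short check gives $Y_\tau = XY_\chi$, and Mackey over the single double coset gives $\res_X^{XY_\chi}\ind_{Y_\chi}^{XY_\chi}\theta = \ind_{X_\chi}^X(\hat\chi\gamma) = \tau$; since $\tau$ is irreducible, $\ind_{Y_\chi}^{XY_\chi}\theta$ is the desired extension of $\tau$ to $Y_\tau$.

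The substantive input is Theorem~\ref{thm:ext-stab-Op'}, which is already in hand, so the present theorem is essentially a formal consequence once one knows $Y/N$ is abelian; everything after the structural claim is bookkeeping with Mackey and Gallagher. The one place that genuinely requires an argument — and hence the expected (modest) obstacle — is the structural claim, specifically pushing the commutator estimate down through the simply connected cover and invoking $G_{\simc} = \rO^{p'}(G_{\simc})$.
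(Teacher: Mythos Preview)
Your proof is correct and rests on the same two inputs as the paper's: Theorem~\ref{thm:ext-stab-Op'} and the fact that $Y/N$ is abelian. The paper simply asserts the latter, whereas you supply a proof via the simply connected cover; your argument there is fine (and essentially mirrors the reduction in the proof of Proposition~\ref{thm:ext-stab-Op'-der}).

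Where you and the paper diverge is in the Clifford-theoretic bookkeeping. You compute $\res_X^Y\rho$ directly by Mackey and then build an explicit extension of $\tau$ to $Y_\tau = XY_\chi$. The paper instead argues more economically: from Theorem~\ref{thm:ext-stab-Op'} and Gallagher it gets that $\ind_N^Y(\psi)$ is multiplicity free for every $\psi\in\irr(N)$, hence by Frobenius reciprocity restriction $Y\to N$ is multiplicity free, and a fortiori so is restriction $Y\to X$. For the extension statement the paper just reverses this: multiplicity-freeness of $\res_X^Y$ gives multiplicity-freeness of $\ind_X^Y(\chi)$, hence of $\ind_X^{Y_\chi}(\chi)$, and since $\chi$ is $Y_\chi$-stable this forces every constituent of $\ind_X^{Y_\chi}(\chi)$ to be an extension. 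Your route is more constructive (you actually exhibit the extension), but the paper's Frobenius-reciprocity trick avoids the Mackey bookkeeping and the verification that $Y_\tau = XY_\chi$.
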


\begin{proof}
Let $N = \rO^{p'}(G)$. Any character $\psi \in \irr(N)$ extends to its
stabilizer $Y_{\psi}$ by Theorem~\ref{thm:ext-stab-Op'}. As $Y/N$ is abelian, we
have by Gallagher's Theorem that $\ind_N^{Y_{\psi}}(\psi)$ is multiplicity free,
hence so is $\ind_N^Y(\psi)$ by Clifford's correspondence. Frobenius reciprocity
now implies that restriction from $Y$ to $N$ is multiplicity free and thus
restriction from $Y$ to $X$ must also be multiplicity free. For the last
statement we reverse the argument using Frobenius reciprocity and Clifford's
correspondence to conclude that $\ind_X^Y(\chi)$ and hence
$\ind_X^{Y_{\chi}}(\chi)$ are multiplicity free. Frobenius reciprocity now shows
that $\chi$ extends to $Y_{\chi}$.
\end{proof}

As in \cite{Lu88}, we define a \emph{regular embedding} to be an injective
homomorphism $\iota \colon \bG\hookrightarrow\wt{\bG}$, where $(\wt{\bG},
\wt{F})$ is a finite reductive group such that $\rZ(\wt{\bG})$ is connected,
$\iota$ commutes with the Frobenius morphisms in the sense that $\iota\circ
F=\wt{F}\circ \iota$, the map $\iota$ induces an isomorphism of $\bG$ onto the
closed subgroup $\iota(\bG)$ of $\wt{\bG}$, and $\iota(\bG)_{\der} =
\wt\bG_{\der}$. Given such a map, there is a corresponding dual surjection $
\iota^\ast\colon \wt{\bG}^\ast \rightarrow \bG^\ast$, such that
$\ker(\iota^\ast)$ is a central torus, see Definition~\ref{def:dual-isotypy} for
further details.

If $\bG \to \wt\bG$ is a regular embedding then, identifying $\bG$ with its
image and writing $\wt G:=\wt\bG^{\wt{F}}$, we have $\rO^{p'}(\wt G) \leqslant G
\leqslant \wt{G} \leqslant \rN_{\wt\bG}(\wt G)$ so from this we obtain Lusztig's
result \cite[Prop.~10]{Lu88}. The following shows that the usual information one
obtains from a regular embedding can be read off from the finite overgroup
$G\cdot\rN_{\bG_{\der}}(G) \leqslant \rN_{\bG}(G)$ of $G$.

\begin{lemma}\label{prop:stabhatG}
Assume $\iota : (\bG,F) \to (\wt{\bG},F)$ is an injective isotypy and
$\rZ(\wt{\bG})$ is connected. Let $\hat{G} = G\cdot \rN_{\bG_{\der}}(G)$. For
any $\chi \in \irr(G)$, we have an isomorphism $\wt{G}/\wt{G}_{\chi} \cong
\hat{G}/\hat{G}_{\chi}$ where $\wt{G} = \wt{\bG}^F$.
\end{lemma}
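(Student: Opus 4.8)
The plan is to compare the two quotients by producing a common overgroup and tracking character stabilizers through it. Write $\bG_{\der} = \wt\bG_{\der}$ (using that $\iota$ is an injective isotypy, so by Lemma~\ref{lem:isotypy-der-diag}(i) $\iota(\bG_{\der}) = \wt\bG_{\der}$, and identifying $\bG$ with its image). Set $N = \rO^{p'}(G) = \rO^{p'}(\wt G)$ — equal because $\rO^{p'}$ depends only on the derived group, or more directly because $\iota$ is a bijection on unipotent elements. Both $\hat G = G\cdot\rN_{\bG_{\der}}(G)$ and $\wt G$ are finite overgroups of $N$ inside $\rN_{\bG}(G) = \rN_{\wt\bG}(\wt G)$ (the latter equality by Lemma~\ref{lem:isotypy-der-diag}(iii), and the inclusions $\rO^{p'}(\wt G)\leqslant\hat G, \wt G\leqslant\rN_{\wt\bG}(\wt G)$ as noted in the text preceding the lemma).

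First I would reduce the statement about $\chi\in\irr(G)$ to one about $N$. Pick any constituent $\psi\in\irr(N\mid\res^G_N\chi)$. By Theorem~\ref{thm:multfree}, restriction among finite overgroups of $N$ inside $\rN_{\bG}(G)$ is multiplicity free, so $\chi$ is determined by $\psi$ together with which constituent of $\ind^G_N\psi$ it is; Clifford theory then identifies $\irr(G\mid\psi)$ with $\irr(G_\psi/N)$, and similarly $\irr(\hat G\mid\psi)$ with $\irr(\hat G_\psi/N)$ and $\irr(\wt G\mid\psi)$ with $\irr(\wt G_\psi/N)$. The key point will be that the relevant quotient groups are abelian (as $\hat G/N$, $\wt G/N$ are), so $\chi$ corresponds to a linear character $\alpha$ of $G_\psi/N$, and the stabilizer $\hat G_\chi$ (resp. $\wt G_\chi$) is the preimage in $\hat G_\psi$ (resp. $\wt G_\psi$) of the stabilizer of $\alpha$ under the conjugation action of $\hat G_\psi/N$ (resp. $\wt G_\psi/N$) on $\irr(G_\psi/N)$. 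Thus $\hat G/\hat G_\chi \cong (\hat G/N)/(\hat G_\chi/N)$ and likewise for $\wt G$, and everything is now a statement about the finite abelian groups $\hat G/N$ and $\wt G/N$ acting on characters of $G_\psi/N$.

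Next I would establish the crucial compatibility: the natural maps $\hat G/N \to \rN_{\bG}(G)/G \cong \rZ(\bG)/\Lang(\rZ(\bG))$ and $\wt G/N \to \wt G/G$ fit together so that the conjugation actions on $\irr(G_\psi/N)$ factor through a common quotient. Concretely, conjugation of $N$-characters by an element of $\rN_{\bG}(G)$ depends only on its image in $\rN_{\bG}(G)/N$, and one checks — using $\bG = \bG_{\der}\cdot\rZ^\circ(\bG)$, $\wt\bG = \iota(\bG)\cdot\rZ^\circ(\wt\bG)$, and that central elements act trivially by conjugation — that both $\hat G_\psi/G_\psi$ and $\wt G_\psi/G_\psi$ map isomorphically onto the same subgroup of the automizer $\rN_{\bG_{\der}}(G_{\der})$-image acting on $\irr(G_\psi/N)$; equivalently, $\hat G\cdot\rZ(\wt\bG)^{F} = \wt G\cdot \rN_{\bG_{\der}}(G)$-style identities let one replace $\wt G$ by $\hat G$ modulo the part acting trivially. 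Granting this, the stabilizer of $\alpha$ has the same index in $\hat G/N$ as in $\wt G/N$ — indeed the two index computations are literally the same orbit-size computation inside a common acting group — whence $|\hat G/\hat G_\chi| = |\wt G/\wt G_\chi|$; and since both are quotients of abelian groups by the kernel of the same action, they are isomorphic (a quotient of a finite abelian group by a subgroup is determined, as an abstract group, only up to the subgroup, so I would instead exhibit the isomorphism directly as $\hat G/\hat G_\chi \xrightarrow{\sim} (\text{orbit of }\alpha) \xleftarrow{\sim} \wt G/\wt G_\chi$ via the action maps).

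The main obstacle I anticipate is the second paragraph above: verifying that conjugation by $\wt G$ on $\irr(N)$ (or on the linear characters $\alpha$ of $G_\psi/N$) genuinely factors through the $\hat G$-action with the same image, i.e. that nothing in $\wt G\setminus G$ acts "more" than what $\rN_{\bG_{\der}}(G)$ already provides. This is morally the statement that the inner-diagonal automorphisms of $G$ induced by $\wt G$ coincide with those induced by $\rN_{\bG}(G)$ — which holds because $\wt\bG/\rZ(\wt\bG) \cong \bG/\rZ(\bG)$ under $\iota$ — combined with the fact that the genuinely central part $\rZ(\wt\bG)^F$ acts trivially by conjugation. Making this precise, and being careful that $\psi$ may have a nontrivial stabilizer (so that the Clifford correspondent $\alpha$ lives over $G_\psi$, not $G$), is where the bookkeeping concentrates; once it is in place the isomorphism of quotients is immediate.
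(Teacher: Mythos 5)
Your approach is genuinely different from the paper's, and it is considerably more involved while still leaving the decisive step unproved. The paper's proof is four lines: set $Z = \{z \in \rZ(\wt\bG) \mid \Lang(z) \in \rZ(\bG_{\der})\}$, a finite \emph{central} subgroup of $\wt\bG$, and observe that $\Gamma := \hat G Z = \wt G Z$. The equality $\hat GZ = \wt GZ$ follows from $\wt\bG = \bG_{\der}\cdot\rZ(\wt\bG)$ together with Lang--Steinberg applied inside the connected group $\rZ(\wt\bG)$ (which is exactly where the connectedness hypothesis earns its keep). Since $Z$ is central, it stabilizes every $\chi\in\irr(G)$, so $\Gamma_\chi = \wt G_\chi Z = \hat G_\chi Z$, and the second isomorphism theorem gives $\wt G/\wt G_\chi \cong \Gamma/\Gamma_\chi \cong \hat G/\hat G_\chi$ at once. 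No Clifford theory, no descent to $\rO^{p'}(G)$, no multiplicity-freeness.

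The gap in your proposal is precisely the step you flag as ``the main obstacle'': the claim that conjugation by $\wt G$ and by $\hat G$ on the relevant characters ``factor through a common quotient with the same image.'' As written, this is only asserted; the heuristic via inner-diagonal automorphisms and the display ``$\hat G\cdot\rZ(\wt\bG)^{F} = \wt G\cdot\rN_{\bG_{\der}}(G)$-style identities'' is not a proof (and that particular identity is not obviously correct as stated — the left-hand side contains $G$ where the right-hand side contains $\wt G$, and neither is a priori contained in the other). Pinning this down requires constructing a genuine common overgroup in which both $\hat G$ and $\wt G$ sit with the same ``effective'' image, and once you do that you will have rediscovered the group $\Gamma = \hat GZ = \wt GZ$. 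Moreover, even granting the common action, your closing step produces a bijection $\hat G/\hat G_\chi \to \text{(orbit of }\alpha) \leftarrow \wt G/\wt G_\chi$ of \emph{sets}, not an isomorphism of groups; an isomorphism requires knowing both quotients arise from the same ambient group modulo the same subgroup, which is again the content of $\Gamma_\chi = \hat G_\chi Z = \wt G_\chi Z$. Finally, note that your write-up never isolates where $\rZ(\wt\bG)$ being connected is used; in the paper's argument it is essential (it is what lets Lang--Steinberg put $\rN_{\bG_{\der}}(G)$ inside $\wt G Z$), and an argument that does not visibly use it should be treated with suspicion.
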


\begin{proof}
We identify $\bG$, as an abstract group, with its image in $\wt\bG$. Let $\Gamma
= \hat{G}Z$ where $Z = \{z \in \rZ(\wt{\bG}) \mid \Lang(z) \in
\rZ(\bG_{\der})\}$. It follows from the Lang--Steinberg Theorem and the
decomposition $\wt\bG = \bG_{\der}\cdot \rZ(\wt\bG)$ that $\Gamma = \wt{G}Z$.
This implies $\Gamma_{\chi} = \wt{G}_{\chi}Z = \hat{G}_{\chi}Z$, which yields
the statement.
\end{proof}

\section{Isotypies and Deligne--Lusztig Induction}\label{sec:DLandisotypies}

In this section, we develop further results on isotypies. For this, we will need
the following result on Deligne--Lusztig induction, which generalizes a standard
result found in \cite[Prop.~11.3.10]{dmbook2}. Related statements on 2-variable
Green functions and bounded derived categories are found in
\cite[Prop.~2.2.2]{Bon00} and \cite[Prop.~1.1]{BR06}.

Before stating the result, let us introduce the following notation. If $\bX$ is
a variety and $g \in \Aut(\bX)$ is an element of finite order, then we denote by
\begin{equation*}
\mathcal{L}(g \mid \bX) 
= \sum_{i \in \ZZ} 
    (-1)^i\Tr(g \mid H_c^i(\bX,\overline{\QQ}_{\ell}))
\end{equation*}
the Lefschetz trace of $g$ acting on the cohomology of $\bX$.

\begin{proposition}\label{prop:surj-isotyp-DL}
Suppose $\phi : (\bG,F) \to (\bG',F')$ is an isotypy with kernel $\bK =
\ker(\phi) \leqslant \rZ(\bG)$ and let $\bL' \leqslant \bG'$ be an $F'$-stable
Levi complement of a parabolic subgroup $\bP' \leqslant \bG'$. If $(\bL,\bP) =
(\phi^{-1}(\bL'),\phi^{-1}(\bP'))$ then
\begin{equation*}
{}^{\top}\phi \circ R_{\bL' \subseteq \bP'}^{\bG'} 
=
\frac{1}{\abs{\bK/\Lang(\bK)}}\sum_{z\Lang(\bK) \in
\bK/\Lang(\bK)}R_{\bL\subseteq\bP}^{\bG} \circ {}^{\top}\Ad_{l_z} \circ
{}^{\top}\phi
\end{equation*}
where $l_z \in \bL$ is an element satisfying $\Lang(l_z) = z$.
\end{proposition}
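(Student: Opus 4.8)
The plan is to compute both sides by expanding Deligne--Lusztig induction via its definition as a virtual character built from the Lefschetz trace on the relevant Deligne--Lusztig variety, and then to compare the two sides class-function-value by class-function-value, i.e.\ evaluate everything at a fixed element. Concretely, recall that for a class function $f$ on $L' = \bL'^{F'}$ one has $R_{\bL'\subseteq\bP'}^{\bG'}(f)(g') = \frac{1}{\abs{L'}}\sum_{l' \in L'} \mathcal{L}\big((g',l') \mid \bX_{\bP'}\big) f(l'^{-1})$, where $\bX_{\bP'} = \{x\bU_{\bP'} \in \bG'/\bU_{\bP'} : x^{-1}F'(x) \in \bU_{\bP'}\}$ is the usual variety. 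First I would pull this formula back through $\phi$: for $g \in G = \bG^F$, the value $\big({}^{\top}\phi \circ R_{\bL'\subseteq\bP'}^{\bG'}(f)\big)(g)$ equals the above sum with $g' = \phi(g)$. The core of the argument is then to relate the variety $\bX_{\bP'}$ for $(\bG',\bP')$ to the variety $\bX_{\bP}$ for $(\bG,\bP)$, using that $\phi$ induces a morphism $\bG/\bU_{\bP} \to \bG'/\bU_{\bP'}$ (since $\bK \leqslant \rZ(\bG)$ lies in every Levi, hence $\bU_{\bP} = \phi^{-1}(\bU_{\bP'})$ maps isomorphically onto $\bU_{\bP'}$, and $\bL = \phi^{-1}(\bL')$, $\bP = \phi^{-1}(\bP')$ by hypothesis).

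The key geometric step is to understand the fibers of the induced map $\bX_{\bP} \to \bX_{\bP'}$. Since $\phi$ has central kernel $\bK$ and $\bG' = \phi(\bG)\cdot\rZ^{\circ}(\bG')$, the image $\phi(\bG)$ has finite index in $\bG'$ only after fixed points are taken; the relevant subtlety is that $\phi(\bG)^{F'}$ need not equal $G' = \bG'^{F'}$, and more importantly $\phi$ restricted to fixed-point varieties is governed by the finite group $\bK/\Lang(\bK)$ via the Lang--Steinberg exact sequence $1 \to \bK^F \to \bG^F \to \phi(\bG)^{F'} \to \bK/\Lang(\bK) \to 1$ (here $\Lang = \Lang_{\bG,F}$). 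I would show that $\bX_{\bP} \to \bX_{\bP'}$, restricted appropriately, is a covering with deck group parametrized by $\bK/\Lang(\bK)$, where the coset $z\Lang(\bK)$ acts by translating $x\bU_{\bP}$ to $l_z x \bU_{\bP}$ for $l_z \in \bL$ with $\Lang(l_z) = z$ — this is where the twist ${}^{\top}\Ad_{l_z}$ on the $\bL$-side enters, since the $L$-action on the fiber coordinate gets conjugated by $l_z$. Then the Lefschetz-trace bookkeeping: $\mathcal{L}\big((\phi(g),\phi(l)) \mid \bX_{\bP'}\big) = \sum_{z\Lang(\bK)} \mathcal{L}\big((g, l_z^{-1} l l_z) \mid \bX_{\bP}\big)$, or an equivalent reindexing, and summing over $l \in L$ versus $l \in L'$ introduces the normalization factor $1/\abs{\bK/\Lang(\bK)}$ together with the comparison of $\abs{L}$ and $\abs{L'}$.

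I expect the main obstacle to be precisely this fiber/covering analysis and the correct identification of how the finite group $\bK/\Lang(\bK)$ acts on $\bX_{\bP}$ compatibly with the $G \times L$-action — in particular keeping track of whether the translations land on the $\bG$-side or the $\bL$-side of the variety, and verifying that the cited related results \cite[Prop.~2.2.2]{Bon00} and \cite[Prop.~1.1]{BR06} (which handle the surjective-with-central-kernel case, or the 2-variable Green function version) can be invoked or adapted rather than redone from scratch. A clean way to organize this is to first treat the two extreme cases separately: (a) $\phi$ an isomorphism onto its image composed with nothing (trivial), (b) $\phi$ surjective with central-torus kernel — for which $\abs{\bK/\Lang(\bK)} = 1$ since a connected group has surjective Lang map, recovering the standard \cite[Prop.~11.3.10]{dmbook2} — and (c) $\phi$ an isomorphism onto a subgroup $\bG' \supseteq \phi(\bG) \supseteq \bG'_{\der}$ with $\phi$ injective, i.e.\ $\bK = 1$, which is again essentially functoriality of $R$ under such embeddings. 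The general isotypy factors (up to these harmless pieces) through a surjection with finite central kernel, and for that piece the Lang--Steinberg sequence above gives exactly the sum over $\bK/\Lang(\bK)$. Finally I would assemble the pieces, checking that the formula is independent of the choice of $l_z$ (changing $l_z$ by an element of $L$ permutes the summand, using that $R_{\bL\subseteq\bP}^{\bG}\circ{}^{\top}\Ad_{l}\circ{}^{\top}\phi$ only depends on $l$ modulo $L$ — indeed modulo conjugacy — because Deligne--Lusztig induction is insensitive to such twists), which justifies the well-posedness of the right-hand side as written.
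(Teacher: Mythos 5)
Your overall strategy (expand $R$ via the Lefschetz trace formula, pull back through $\phi$, compare the two varieties, and account for the defect by $\bK/\Lang(\bK)$) is the right one and matches the spirit of the paper's proof. But the central geometric claim in your plan is wrong as stated, and it is precisely the point where the real work lives.

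You assert that $\bX_{\bP} \to \bX_{\bP'}$ is ``a covering with deck group $\bK/\Lang(\bK)$, where $z\Lang(\bK)$ acts by translating $x\bU_{\bP}$ to $l_zx\bU_{\bP}$.'' Neither half of this is correct. First, translation by $l_z$ does not preserve the Deligne--Lusztig variety: if $\Lang(x)\in\bU$, then $\Lang(xl_z)=l_z^{-1}\Lang(x)F(l_z)\in\bU\Lang(l_z)=\bU z$, so $xl_z$ lands in the different closed subset $\bY_z=\{g:\Lang(g)\in\bU z\}$, which is disjoint from $\bY_1$ whenever $z\neq 1$. So the purported ``deck action'' moves you off the variety rather than permuting its sheets. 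Second, the map $\bY_1\to\bY'$ (equivalently $\bX_\bP\to\bX_{\bP'}$) is \emph{not surjective} whenever $\bK/\Lang(\bK)\neq 1$: given $g'\in\bY'$, any preimage $g$ has $\Lang(g)\in\bU z_0$ for some $z_0\in\bK$ whose class in $\bK/\Lang(\bK)$ is an obstruction to adjusting $g$ by an element of $\bK$ to land in $\bY_1$. So there is no single covering here. The correct picture, which the paper uses, is to pick a finite $A\leqslant\bK$ with $\bK=A\cdot\Lang(\bK)$, form the disjoint union $\bY=\bigsqcup_{a\in A}\bY_a$, and observe that $\phi$ induces a \emph{bijective} morphism $\bY/\hat{K}\to\bY'$ for the finite group $\hat{K}=\bK\cap\Lang_{\bG,F}^{-1}(A)$. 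The twist ${}^{\top}\Ad_{l_a}$ then appears because right translation by $l_a$ identifies $\bY_1$ with $\bY_a$ while conjugating the $L$-action on the right factor. Your sketch never confronts this non-surjectivity, and the Lefschetz-trace bookkeeping you describe (``summing over $l\in L$ versus $l\in L'$ introduces the normalization factor'') is exactly the nontrivial computation that remains; in the paper it is carried out over the finite group $\hat{L}=\Lang_{\bL,F}^{-1}(A)$ using the quotient formulas \cite[Prop. 8.1.10(ii), Prop. 8.1.7(iii)]{dmbook2}.

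The fallback plan of splitting into cases (a)/(b)/(c) and composing is also not as harmless as you suggest: because of the $\Ad_{l_z}$ twists, the claimed formula does not compose in an obvious way under composition of isotypies (you would get a double sum over two $\bK_i/\Lang(\bK_i)$'s and would need a separate argument to collapse it to a single sum over $\bK/\Lang(\bK)$ for the composite kernel). So you cannot simply reduce to the three ``extreme'' cases without first proving a composition lemma of comparable difficulty to the statement itself. The paper avoids this entirely by proving the formula directly for an arbitrary isotypy with the disjoint-union device above.
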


\begin{proof}
Let $\bU \leqslant \bP$ be the unipotent radical of $\bP$, so  $\bU' =
\phi(\bU)$ is the unipotent radical of $\bP'$. We define $\bY' = \{g \in \bG'
\mid \Lang(g) \in \bU'\}$. We have
\cite[Lem. 9.1.5, Prop. 9.1.6]{dmbook2} that
\begin{equation*}
R_{\bL'\subseteq\bP'}^{\bG'}(\chi)(\phi(g)) 
= 
\frac{1}{\abs{L'}}\sum_{l' \in L'}\mathcal{L}((\phi(g),l')\mid \bY')\chi(l'^{-1}).
\end{equation*}
If $z \in \rZ(\bG)$ then we let $\bY_z = \{g \in \bG \mid \Lang(g) \in \bU z\}$,
which is a closed subset of $\bG$. If $l \in \Lang_{\bL,F}^{-1}(\rZ(\bG))$ then
$\bY_zl = \bY_{z\Lang(l)}$ for any $z \in \rZ(\bG)$.

We choose a finite subgroup $A \leqslant \bK$ such that $\bK = A \cdot
\Lang(\bK)$. This exists because $\bK$ is abelian and every element has finite
order. As $A$ is finite $\bY = \bigsqcup_{a \in A} \bY_a$ is a closed subset of
$\bG$ and $\phi$ factors through a bijective morphism $\bY/\hat{K} \to \bY'$,
where $\hat{K} = \bK \cap \Lang_{\bG,F}^{-1}(A)$ is a finite group.

The group $\hat{L} = \Lang_{\bL,F}^{-1}(A)$ is finite and satisfies
$\phi(\hat{L}) = L' := \bL'^F$. As $\bY_z \cap \bY_{z'} = \varnothing$ if $z \neq
z'$ we see that $\bY_al = \bY_a$ if and only if $l \in L$. We then have \cite[Prop. 8.1.10(ii), Prop. 8.1.7(iii)]{dmbook2} 
\begin{equation*}
\mathcal{L}((g,l) \mid \bY/\hat{K}) 
= 
\frac{1}{\abs{\hat{K}}}\sum_{\substack{z \in \hat{K}\\ lz \in L}}\sum_{a \in A}
\mathcal{L}((g,l_a^{-1}lzl_a) \mid \bY_1)
\end{equation*}
for any $(g,l) \in G \times \hat{L}^{\mathrm{opp}}$.

If $\pi : \hat{L} \times \hat{K} \to \bL$ is the natural product map then the
fiber $\pi^{-1}(g)$ over $g \in L$ has cardinality $\abs{\hat{K}}$. Summing over
$\hat{L}$ we get
\begin{align*}
\frac{1}{\abs{\hat{L}}}\sum_{l \in \hat{L}} \mathcal{L}((g,l) \mid \bY/\hat{K}) 
= 
\frac{1}{\abs{A}}\sum_{a \in A} \frac{1}{\abs{L}}\sum_{l \in
L}\mathcal{L}((g,l_a^{-1}ll_a) \mid \bY_1),
\end{align*}
and from this we conclude that
\begin{equation*}
R_{\bL'\subseteq\bP'}^{\bG'}(\chi)(\phi(g)) 
= \frac{1}{\abs{A}}\sum_{a \in A}R_{\bL}^{\bG}(\chi \circ \phi \circ
\Ad_{l_a})(g).
\end{equation*}
If $a \in A$ is contained in the kernel of the map $A \to \bK/\Lang(K)$ then
$\Ad_{l_a}$ restricts to an inner automorphism of $L$. Hence, we may take the
sum over $A/(A \cap \Lang(\bK)) \cong \bK/\Lang(\bK)$.
\end{proof}

\begin{corollary}\label{cor:R_T^G}
If $\bK \leqslant \Lang(\rZ(\bL))$ in the setting of
Proposition~\ref{prop:surj-isotyp-DL}, then ${}^{\top}\phi \circ R_{\bL'}^{\bG'}
= R_{\bL}^{\bG} \circ {}^{\top}\phi$. This condition is satisfied if either
$\bK$ is connected or $\rZ(\bL)$ is connected.
\end{corollary}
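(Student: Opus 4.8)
The plan is to deduce Corollary~\ref{cor:R_T^G} directly from Proposition~\ref{prop:surj-isotyp-DL} by showing that, under the hypothesis $\bK \leqslant \Lang(\rZ(\bL))$, every summand on the right-hand side of the displayed identity coincides with the single term $R_{\bL\subseteq\bP}^{\bG} \circ {}^{\top}\phi$, so that the average over $\bK/\Lang(\bK)$ collapses. First I would fix, for each class $z\Lang(\bK) \in \bK/\Lang(\bK)$, an element $l_z \in \bL$ with $\Lang(l_z) = z$ as in the Proposition; the content of the hypothesis is that $z \in \bK \leqslant \Lang(\rZ(\bL))$, so we may in fact choose $l_z \in \rZ(\bL)$. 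Then $\Ad_{l_z}$ is the identity automorphism of $\bL$ — and in particular of $L = \bL^F$ — since $l_z$ is central in $\bL$. Hence ${}^{\top}\Ad_{l_z}$ is the identity on $\cf(L)$, every summand equals $R_{\bL\subseteq\bP}^{\bG} \circ {}^{\top}\phi$, and the prefactor $1/\abs{\bK/\Lang(\bK)}$ cancels the number of terms, giving ${}^{\top}\phi \circ R_{\bL'\subseteq\bP'}^{\bG'} = R_{\bL\subseteq\bP}^{\bG} \circ {}^{\top}\phi$. Dropping the parabolic from the notation is then justified by the independence of Deligne--Lusztig induction from the choice of parabolic.

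The only genuinely delicate point — and the step I expect to require a line of care rather than a one-liner — is the passage from ``$z \in \Lang(\rZ(\bL))$'' to ``$l_z$ may be taken in $\rZ(\bL)$''. Here I would note that $\Lang = \Lang_{\bL,F}$ restricted to the connected group $\rZ^{\circ}(\bL)$ is surjective onto $\rZ^{\circ}(\bL)$ by Lang--Steinberg, and more to the point that $\Lang(\rZ(\bL))$ is by definition the set of values $x^{-1}F(x)$ with $x \in \rZ(\bL)$; so writing $z = x^{-1}F(x)$ with $x \in \rZ(\bL)$ we simply set $l_z = x$. Since the Proposition allows \emph{any} choice of $l_z$ with $\Lang(l_z) = z$ — the averaged right-hand side is independent of these choices — this substitution is legitimate. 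A small bookkeeping remark: one should observe $\rZ(\bL) \leqslant \Lang_{\bL,F}^{-1}(\rZ(\bG))$ when $\bK \leqslant \rZ(\bG)$, so that the element $l_z$ indeed lies in the set over which $\Ad_{l_z}$ was considered in the proof of the Proposition, but this is automatic.

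Finally, for the last sentence of the statement I would verify the two sufficient conditions. If $\bK$ is connected, then $\Lang_{\bG,F}$ restricts to a surjection $\bK \twoheadrightarrow \bK$, so $\Lang(\bK) = \bK$ and $\bK/\Lang(\bK) = 1$; in this case the averaging in Proposition~\ref{prop:surj-isotyp-DL} is trivially over a single class and one does not even need the reduction above. If instead $\rZ(\bL)$ is connected, then $\bK \leqslant \rZ(\bG) \leqslant \rZ(\bL)$ (using that a central torus of $\bG$ is central in any Levi, and $\bK$ connected reductive central means $\bK \leqslant \rZ^{\circ}(\bG) \leqslant \rZ^{\circ}(\bL) = \rZ(\bL)$; more carefully, $\bK$ as the kernel of an isotypy is a subgroup of $\rZ(\bG)$, and since $\rZ(\bL)$ is assumed connected it equals $\rZ^{\circ}(\bL)$, onto which $\Lang_{\bL,F}$ is surjective), whence $\bK \leqslant \Lang_{\bL,F}(\rZ^{\circ}(\bL)) = \Lang(\rZ(\bL))$ and the main hypothesis holds. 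I do not anticipate any obstacle here beyond assembling these standard facts about Lang maps and centers of Levi subgroups in the right order.
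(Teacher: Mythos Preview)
Your proposal is correct and follows essentially the same argument as the paper: choose $l_z \in \rZ(\bL)$ so that $\Ad_{l_z}|_{\bL}$ is trivial, collapsing the sum in Proposition~\ref{prop:surj-isotyp-DL}, and verify the two sufficient conditions via $\bK = \Lang(\bK)$ (when $\bK$ is connected) and $\Lang(\rZ(\bL)) = \rZ(\bL)$ (when $\rZ(\bL)$ is connected), using $\bK \leqslant \rZ(\bG) \leqslant \rZ(\bL)$. Your exposition is more verbose---the paper dispatches this in three sentences---and the parenthetical where you momentarily speak of ``$\bK$ connected reductive central'' in the $\rZ(\bL)$-connected case is a stray thought you should simply delete, but the substance is identical.
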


\begin{proof}
Under our assumption, we may choose $l_z \in \rZ(\bL)$ such that $\Lang(l_z) =
z$. In this case $\Ad_{l_z}|_{\bL}$ is trivial and the statement follows. Note
that $\bK \leqslant \rZ(\bG) \leqslant \rZ(\bL)$ and if $\bK$ is connected then
$\bK = \Lang(\bK) \leqslant \Lang(\rZ(\bL))$ and if $\rZ(\bL)$ is connected then
$\Lang(\rZ(\bL)) = \rZ(\bL)$.
\end{proof}

Note that Corollary~\ref{cor:R_T^G} applies in particular to the case where $\bL
= \rZ(\bL)$ is a torus. We give one further consequence of this formula applied
in the case of Harish-Chandra induction, see \cite[Prop.~12.1]{Bon06} for a
special case. This will be used in the next section.

\begin{proposition}\label{prop:cuspidal-pullback}
If $\phi : (\bG,F) \to (\bG',F')$ is an isotypy, then for any $\chi' \in
\irr(G')$ and $\chi \in \irr(G \mid \tw{\top}\phi(\chi'))$ we have $\chi$ is a
cuspidal character if and only if $\chi'$ is a cuspidal character.
\end{proposition}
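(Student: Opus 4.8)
The plan is to reduce the statement to the characterization of cuspidality in terms of vanishing of Harish-Chandra restrictions $\tw{\top}R_{\bL}^{\bG}$ for proper $F$-stable Levi subgroups $\bL$, and then to use Corollary~\ref{cor:R_T^G} to relate these restrictions across the isotypy $\phi$. Recall that $\chi \in \irr(G)$ is cuspidal precisely when $\langle \tw{\top}R_{\bL\subseteq\bP}^{\bG}(\chi), \psi \rangle = 0$ for every proper $F$-stable Levi $\bL$ of a parabolic $\bP$ and every $\psi \in \irr(L)$; equivalently, by adjunction, when $\langle \chi, R_{\bL\subseteq\bP}^{\bG}(\psi)\rangle = 0$ for all such $\bL,\bP,\psi$. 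The key structural input is that the isotypy $\phi$ sets up an inclusion-preserving bijection $\bL' \mapsto \bL = \phi^{-1}(\bL')$ between $F'$-stable Levi subgroups of $\bG'$ and $F$-stable Levi subgroups of $\bG$ (with parabolics likewise matched, $\bP = \phi^{-1}(\bP')$), under which proper Levis correspond to proper Levis, since $\ker(\phi) \leqslant \rZ(\bG)$ lies in every Levi.

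First I would fix a proper $F$-stable Levi $\bL' \leqslant \bG'$ with parabolic $\bP'$ and set $(\bL,\bP) = (\phi^{-1}(\bL'),\phi^{-1}(\bP'))$. Since $\bK = \ker(\phi) \leqslant \rZ(\bG) \leqslant \rZ(\bL)$, Corollary~\ref{cor:R_T^G} gives $\tw{\top}\phi \circ R_{\bL'\subseteq\bP'}^{\bG'} = R_{\bL\subseteq\bP}^{\bG} \circ \tw{\top}\phi$ as maps $\cf(L') \to \cf(G)$. Taking adjoints (Harish-Chandra restriction is adjoint to Harish-Chandra induction with respect to the standard inner products, and $\tw{\top}\phi$ on class functions has a known adjoint given by averaging/induction through the surjection $\phi|_G \colon G \to G'$ — note $G' = \phi(G)$ since $\bG' = \phi(\bG)\cdot\rZ^{\circ}(\bG')$ forces $\bG'^{F'} = \phi(\bG)^{F'}$), I would deduce a companion identity $\tw{\top}R_{\bL\subseteq\bP}^{\bG} \circ (\tw{\top}\phi)^{\vee} = (\tw{\top}\phi)^{\vee} \circ \tw{\top}R_{\bL'\subseteq\bP'}^{\bG'}$, or more directly argue with inner products. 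Concretely: for $\chi \in \irr(G \mid \tw{\top}\phi(\chi'))$ and any $\psi \in \irr(L')$,
\begin{equation*}
\langle \chi, R_{\bL\subseteq\bP}^{\bG}(\tw{\top}\phi(\psi)) \rangle
= \langle \chi, \tw{\top}\phi(R_{\bL'\subseteq\bP'}^{\bG'}(\psi)) \rangle
= \langle \chi, R_{\bL'\subseteq\bP'}^{\bG'}(\psi)\circ\phi \rangle,
\end{equation*}
and since $\chi$ is a constituent of $\chi'\circ\phi$, the last quantity is a nonnegative multiple of $\langle \chi', R_{\bL'\subseteq\bP'}^{\bG'}(\psi)\rangle$ (precisely, it equals $\langle \chi, \chi'\circ\phi\rangle^{-1}\langle\chi,\chi'\circ\phi\rangle$-weighted... — rather, one uses that every constituent of $\chi'\circ\phi$ restricts the pairing appropriately). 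This shows that if $\chi'$ is cuspidal then $\chi$ is cuspidal.

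For the converse, suppose $\chi$ is cuspidal; I want every $\langle\chi', R_{\bL'\subseteq\bP'}^{\bG'}(\psi)\rangle = 0$ for proper $\bL'$. By Frobenius reciprocity $\langle\chi', R_{\bL'\subseteq\bP'}^{\bG'}(\psi)\rangle = \langle \tw{\top}R_{\bL'\subseteq\bP'}^{\bG'}(\chi'),\psi\rangle$, so it suffices to show $\tw{\top}R_{\bL'\subseteq\bP'}^{\bG'}(\chi') = 0$. Pulling back along $\phi$ and using that $\tw{\top}\phi$ is injective on $\cf(G')$ (being essentially inflation through the surjection $G \to G'$), it is enough to show $\tw{\top}\phi\bigl(\tw{\top}R_{\bL'}^{\bG'}(\chi')\bigr) = 0$; but by the adjoint form of Corollary~\ref{cor:R_T^G} this class function, evaluated against any $\theta \in \cf(L)$, pairs with $\theta$ as $\langle \chi'\circ\phi, R_{\bL\subseteq\bP}^{\bG}(\theta)\rangle$ up to the standard normalizing constant $[G':\phi(G)]=1$, and since $\chi$ is a constituent of $\chi'\circ\phi$ appearing in every constituent's worth... — here I would instead run the cleanest version: write $\chi'\circ\phi = \sum_i \chi_i$ over the (distinct, by Theorem~\ref{thm:multfree} applied with $X = G \leqslant Y$... or simply over constituents with multiplicity), observe each $\chi_i$ is cuspidal by the forward direction applied in reverse would be circular, so instead: $\langle \chi'\circ\phi, R_{\bL\subseteq\bP}^{\bG}(\theta)\rangle = \sum_i \langle\chi_i, R_{\bL\subseteq\bP}^{\bG}(\theta)\rangle$, and the term $\langle\chi, R_{\bL\subseteq\bP}^{\bG}(\theta)\rangle$ vanishes since $\chi$ is cuspidal — but the other terms need not.

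\textbf{The main obstacle} is exactly this last point: a single cuspidal constituent $\chi$ of $\chi'\circ\phi$ does not obviously force all of $\chi'\circ\phi$ to be cuspidal. I expect to resolve it by noting that $\phi$ restricts to a central-by-finite situation on $G$: the constituents $\chi_i$ of $\chi'\circ\phi$ are $G$-conjugate (Clifford theory, as $\phi(G) = G'$ is normal in nothing larger here — rather, they form a single orbit under $\Hom(G/\phi^{-1}(\rZ(\bG')), \CC^\times)$-twisting, i.e.\ they differ by linear characters of $G$ trivial on $[G,G]$). Since cuspidality is preserved by tensoring with a linear character (Harish-Chandra induction satisfies $R_{\bL}^{\bG}(\psi)\otimes\hat\lambda = R_{\bL}^{\bG}(\psi\otimes\tw{\top}(\,)\hat\lambda)$ for $\hat\lambda$ linear), all the $\chi_i$ are simultaneously cuspidal or not. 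Hence $\chi$ cuspidal $\Rightarrow$ all $\chi_i$ cuspidal $\Rightarrow \chi'\circ\phi$ cuspidal (a sum of class functions whose Harish-Chandra restrictions vanish) $\Rightarrow \tw{\top}\phi(\tw{\top}R_{\bL'}^{\bG'}(\chi')) = 0 \Rightarrow \tw{\top}R_{\bL'}^{\bG'}(\chi') = 0$ by injectivity of $\tw{\top}\phi$, as needed. I would double-check the edge cases where $\bG$ is itself a torus (trivially cuspidal on both sides) and where $\phi$ is an isomorphism (statement is vacuous), and make sure the parabolic $\bP$ in Corollary~\ref{cor:R_T^G} can be chosen $F$-stable whenever $\bP'$ is — which holds because $\phi$ is defined over $\FF_q$ and $\phi^{-1}$ of an $F'$-stable parabolic is $F$-stable.
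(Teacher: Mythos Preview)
Your proposal has two genuine gaps.

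\textbf{First}, you invoke Corollary~\ref{cor:R_T^G} for an arbitrary proper Levi $\bL = \phi^{-1}(\bL')$, justifying this by ``$\bK \leqslant \rZ(\bG) \leqslant \rZ(\bL)$''. But the hypothesis of Corollary~\ref{cor:R_T^G} is $\bK \leqslant \Lang(\rZ(\bL))$, which is strictly stronger; it holds when $\bK$ is connected or $\rZ(\bL)$ is connected, and neither is guaranteed for a general isotypy and a general Levi. This is precisely why the paper works with the full formula of Proposition~\ref{prop:surj-isotyp-DL}, in which the extra terms $\tw{\top}\Ad_{l_z}$ do not disappear. Those terms are harmless for the forward direction (the right-hand side of \eqref{eq:RLG-applied} is still a genuine character, so if $\chi'$ is a constituent of $R_{\bL'}^{\bG'}(\psi')$ then $\chi$ is a constituent of some $R_{\bL}^{\bG}(-)$), but you cannot simply erase them.

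\textbf{Second}, you assert $G' = \phi(G)$, arguing that $\bG' = \phi(\bG)\cdot \rZ^{\circ}(\bG')$ forces this on $F'$-fixed points. It does not: the decomposition is only at the level of algebraic groups, and $\phi(G) \leqslant G'$ can be proper (the paper notes this explicitly in the proof of Lemma~\ref{lem:transposephi}). This breaks your ``injectivity of $\tw{\top}\phi$'' step and your Clifford-theoretic claim that the constituents of $\chi'\circ\phi$ form a single orbit under linear twists of $G$. For the converse direction the paper instead argues on the $G'$ side: given $\chi$ a constituent of $R_{\bL}^{\bG}(\psi)$, one shows via a central-character computation that $\psi$ has $K = \bK^F$ in its kernel, so $\psi$ lifts to some $\psi'' \in \irr(L')$; then \eqref{eq:RLG-applied} produces a non-cuspidal $\chi'' \in \irr(G')$ with $\chi \in \irr(G\mid \tw{\top}\phi(\chi''))$. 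The multiplicity-freeness Theorem~\ref{thm:multfree} (applied to $\phi(G) \leqslant G'$) and Gallagher then give $\chi' = \chi''\lambda$ for some linear $\lambda \in \irr(G'/\phi(G))$, and the linear-twist argument you had in mind is applied to $\chi''$ in $G'$, not to the constituents of $\chi'\circ\phi$ in $G$.
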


\begin{proof}
The map $(\bL',\bP') \mapsto (\bL,\bP) = (\phi^{-1}(\bL'),\phi^{-1}(\bP'))$
gives a bijection between the pairs consisting of an $F'$-stable parabolic
subgroup $\bP' \leqslant \bG'$ with $F'$-stable Levi complement $\bL' \leqslant
\bP'$ and the corresponding set of pairs in $\bG$. If $\bK = \ker(\phi)$, then
given such pairs and $\psi' \in \irr(L')$, it follows from
Proposition~\ref{prop:surj-isotyp-DL} that
\begin{equation}\label{eq:RLG-applied}
\abs{\bK/\Lang(\bK)}\tw{\top}\phi(R_{\bL' \subseteq \bP'}^{\bG'}(\psi'))
=
\sum_{z\Lang(\bK) \in
\bK/\Lang(\bK)}R_{\bL\subseteq\bP}^{\bG}(\tw{\top}\Ad_{l_z}(
{}^{\top}\phi(\psi'))).
\end{equation}

The right hand side of \eqref{eq:RLG-applied} is a sum of characters. Therefore,
if $\chi' \in \irr(G' \mid R_{\bL'\subseteq\bP'}^{\bG'}(\psi'))$ then $\chi \in
\irr(G \mid R_{\bL \subseteq \bP}^{\bG}(\psi))$ for some $\psi \in \irr(L)$. So
if $\chi'$ is not cuspidal then neither is $\chi$. Conversely, suppose $\chi \in
\irr(G \mid R_{\bL\subseteq\bP}^{\bG}(\psi))$ for some $\psi \in \irr(L)$. If
$\hat\psi \in \irr(P)$ is the inflation of $\psi$ to $P = \bP^F$ then
$R_{\bL\subseteq \bP}^{\bG}(\psi) = \ind_P^G(\hat\psi)$. As $Z(G) \leqslant L
\leqslant P$ it follows from the induction formula that
\begin{equation*}
\res_{Z(G)}^G(R_{\bL \subseteq \bP}^{\bG}(\psi))
=
[G:P]\psi(1)\omega_{\psi}
\end{equation*}
where $\omega_{\psi} = \psi/\psi(1) \in \irr(\rZ(G))$. As $\res_{\rZ(G)}^G(\chi)
= \chi(1)\omega_{\chi}$ must occur in the left hand side, with $\omega_{\chi} = \chi/\chi(1) \in \irr(\rZ(G))$,
 we conclude that $\omega_{\psi} =
\omega_{\chi}$. In particular, $\psi$ has $K = \bK^F$ in its kernel because
$\chi$ does, so there exists a $\psi'' \in \irr(L')$ such that $\psi \in
\irr(L \mid \tw{\top}\phi(\psi''))$.

As $\chi$ must occur on the right hand side of \eqref{eq:RLG-applied}, with
$\psi'$ replaced by $\psi''$, and this is a sum of characters, it must also
occur in the left hand side. Therefore, there must exist a $\chi'' \in \irr(G'
\mid R_{\bL' \subseteq \bP'}^{\bG'}(\psi''))$ such that $\chi \in \irr(G \mid
\tw{\top}\phi(\chi''))$. By Theorem~\ref{thm:multfree}, restriction from $G'$ to
$\phi(G)$ is multiplicity free. It is then a consequence of Gallagher's
Theorem, and the fact that $G'/\phi(G)$ is abelian, that $\chi' = \chi''\lambda$
for some $\lambda \in \irr(G'/\phi(G))$.

Every $p$-element is in the kernel of $\lambda$ so $R_{\bL' \subseteq
\bP'}^{\bG'}(\psi'')\lambda = R_{\bL'\subseteq\bP'}^{\bG'}(\psi')$, where $\psi'
:= \psi''\res_{L'}^{G'}(\lambda)$, by \cite[Cor.~7.3.5]{dmbook2}. Therefore
$\chi' \in \irr(G' \mid R_{\bL' \subseteq \bP'}^{\bG'}(\psi'))$ so if $\chi$ is
not cuspidal then neither is $\chi'$.
\end{proof}

We investigate the implications Corollary~\ref{cor:R_T^G} has for Lusztig
series, following the arguments presented in \cite[Prop.~7.2]{Tay16}. First we
need to extend the discussion of dual isogenies to isotypies. For this, we
follow \cite[Def.~2.11]{Ruh22}.

\begin{definition}\label{def:dual-isotypy}
Assume $(\bG,F)$ and $(\bG',F')$ are finite reductive groups with dual groups
$(\bG^{\ast},F^{\ast})$ and $(\bG'^{\ast},F'^{\ast})$ with the dualities
witnessed by $(\bT_0,\bT_0^{\ast},\delta)$ and $(\bT_0',\bT_0'^{\ast},\delta')$
respectively. Two isotypies $\phi : (\bG,F) \to (\bG',F')$ and $\phi^{\ast} :
(\bG'^{\ast},F'^{\ast}) \to (\bG^{\ast},F^{\ast})$ are said to be \emph{dual} if
\begin{equation}\label{eq:dual-isotypies}
\wc{X}(\phi^{\ast}\circ \Ad_{g^*}) \circ \delta' = \delta \circ X(\phi\circ \Ad_g)
\end{equation}
for some $(g,g^*) \in \bG\times\bG'^{\ast}$ satisfying $\phi({}^g\bT_0)
\leqslant \bT_0'$ and $\phi^{\ast}({}^{g^*}\bT_0'^{\ast}) \leqslant
\bT_0^{\ast}$.
\end{definition}

Note the condition in \eqref{eq:dual-isotypies} generalises the condition in
\eqref{eq:commute-frob}. We need the following.

\begin{lemma}\label{lem:dual-isotypies}
Any isotypy $\phi : (\bG,F) \to (\bG',F')$ admits a dual $\phi^{\ast} :
(\bG'^{\ast},F'^{\ast}) \to (\bG^{\ast},F^{\ast})$, which is unique up to
composing with some $\Ad_h$ with $h \in \rN_{\bG'^*}(G'^*)$.
\end{lemma}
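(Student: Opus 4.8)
The plan is to reduce the statement about isotypies to the corresponding (standard) statement for isogenies. Recall that any isotypy $\phi : (\bG,F) \to (\bG',F')$ factors through a central torus quotient and a central isogeny: writing $\bK = \ker(\phi) \leqslant \rZ(\bG)$, the map $\phi$ identifies $\bG'$ with $(\bG/\bK)\cdot \rZ^{\circ}(\bG')$, so we may split $\phi$ as the composition of the surjection $\bG \twoheadrightarrow \bG/\bK^{\circ}$, the central isogeny $\bG/\bK^{\circ} \to \bG/\bK$, and a regular-type embedding $\bG/\bK \hookrightarrow \bG'$ with image of finite index times a central torus. However, I would prefer a more uniform approach: use that on root data, $\phi$ induces a map of based root data (an isogeny of root data in the sense of \cite[Def.~11.1.1]{dmbook2} up to a central piece), and this is precisely the kind of combinatorial datum that dualizes. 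Concretely, first fix the witnesses $(\bT_0,\bT_0^{\ast},\delta)$ and $(\bT_0',\bT_0'^{\ast},\delta')$. After composing $\phi$ with an inner automorphism $\Ad_g$ of $\bG$ — which is harmless since $\Ad_g$ has a trivial effect on the finite group up to the overall ambiguity we are allowed — we may assume $\phi(\bT_0) \leqslant \bT_0'$. Then $X(\phi) : X(\bT_0') \to X(\bT_0)$ and $\wc X(\phi) : \wc X(\bT_0) \to \wc X(\bT_0')$ are defined.

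The key step is to \emph{define} $\phi^{\ast}$ on the level of tori and root data and then invoke the isogeny theorem to lift it to an actual morphism $\bG'^{\ast} \to \bG^{\ast}$. Set $f := \delta \circ X(\phi) \circ \delta'^{-1} : \wc X(\bT_0'^{\ast}) \to \wc X(\bT_0^{\ast})$; equivalently, on character lattices we get a map $X(\bT_0^{\ast}) \to X(\bT_0'^{\ast})$. I would check, using that $\phi$ is an isotypy (so $\bG'_{\der} \leqslant \phi(\bG)$ and $\ker\phi \leqslant \rZ(\bG)$, equivalently $\phi$ restricts to an isogeny $\bG_{\der} \to \bG'_{\der}$, by Lemma~\ref{lem:isotypy-der-diag}(i)), that this linear map carries roots to roots and coroots to coroots in the way required to be a morphism of root data — on the semisimple part it is dual to the isogeny $\phi|_{\bG_{\der}}$, and on the central/cocentral part it is just a lattice map, which imposes no positivity constraint. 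The isogeny theorem for reductive groups (in the form allowing central-torus kernels, i.e.\ the structure theory of \cite[\S11.1]{dmbook2}) then produces a morphism $\phi^{\ast} : \bG'^{\ast} \to \bG^{\ast}$ with $\bT_0'^{\ast} \mapsto \bT_0^{\ast}$ realizing $f$ on cocharacter lattices, unique up to an inner automorphism by $\rN_{\bG^{\ast}}(\bT_0^{\ast})$, equivalently up to composing with $\Ad_h$, $h \in \rN_{\bG^{\ast}}(\bT_0^{\ast})$. Shrinking this to $\rN_{\bG'^*}(G'^*)$-ambiguity (the inner-diagonal automorphism group in the sense of \cite[\S11.5]{dmbook2}) is exactly the statement that the only genuine freedom is in the $\bT_0^{\ast}$-coset representative and its Galois-twist, which one packages via Lemma~\ref{lem:normalizer-is-lang-preimage}.

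Next I would verify the Frobenius-compatibility: $\phi^{\ast}$ is an isotypy $(\bG'^{\ast},F'^{\ast}) \to (\bG^{\ast},F^{\ast})$, i.e.\ $\phi^{\ast}\circ F'^{\ast} = F^{\ast}\circ\phi^{\ast}$. This follows by transporting the identity $\phi\circ F = F'\circ\phi$ through $X(-)$ and $\delta,\delta'$: since $\delta$ and $\delta'$ satisfy \eqref{eq:commute-frob}, and $f$ was built from $\delta\circ X(\phi)\circ\delta'^{-1}$, one gets $\wc X(F^{\ast})\circ f = f\circ \wc X(F'^{\ast})$, which is the cocharacter-lattice incarnation of the desired commutation; the isogeny theorem's uniqueness clause then upgrades this to equality of morphisms after possibly adjusting the representative of $\phi^{\ast}$ by an element of $\rN_{\bG^{\ast}}(\bT_0^{\ast})^{F^{\ast}}$, which is permitted. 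Finally, I would check that $\phi^{\ast}$ as constructed satisfies \eqref{eq:dual-isotypies} for the pair $(g,g^\ast) = (g, 1)$ coming from the reduction above — this is essentially the definition of $f$ unwound — and that any other dual $\psi^{\ast}$ differs from $\phi^{\ast}$ by precomposition with $\Ad_{g^\ast}$ for some $g^\ast$ with $\psi^{\ast}({}^{g^\ast}\bT_0'^{\ast}) \leqslant \bT_0^{\ast}$ satisfying the compatibility, forcing $g^\ast \in \rN_{\bG'^*}(\bT_0'^*)$ and, after the Frobenius constraint, $g^\ast \in \rN_{\bG'^*}(G'^*)$ by Lemma~\ref{lem:normalizer-is-lang-preimage}.

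The main obstacle I anticipate is bookkeeping the \emph{central} (non-semisimple) directions carefully: an isotypy need be neither injective nor surjective, so $X(\phi)$ is neither injective nor surjective in general, and one must be sure the dualized lattice map still satisfies exactly the axioms needed to apply the isogeny/existence theorem — in particular that the image contains the coroot lattice of $\bG^{\ast}$ up to finite index and that the constraints involving $p$-morphisms (Frobenius twists on individual root subgroups) are automatically satisfied because both sides share the same characteristic $p$ and $\phi$, being defined over $\FF$, introduces no extra $q$-power twists beyond those already recorded in $\delta,\delta'$. Pinning down the precise ambiguity — that it is exactly $\Ad_h$ for $h \in \rN_{\bG'^*}(G'^*)$ and not something larger — is where Lemma~\ref{lem:normalizer-is-lang-preimage} and the finiteness of $\rN_{\bG'^*}(G'^*)/\rC_{\bG'^*}(G'^*)$ do the essential work, and I would be careful to invoke \cite[Lem.~4.3.3]{Ca85} (as the surrounding text does) to rephrase \eqref{eq:dual-isotypies} in Weyl-group terms when comparing two candidate duals.
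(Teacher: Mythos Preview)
Your existence argument is essentially the paper's: conjugate so that $\phi(\bT_0)\leqslant\bT_0'$, transport the restriction $\phi|_{\bT_0}$ through $\delta,\delta'$ to a $p$-morphism of root data, and invoke the extended isogeny theorem (the paper cites \cite[Thm.~3.8]{Tay19}) to produce $\phi^{\ast}$. Your remarks about the central directions are exactly the point handled by the notion of $p$-morphism in \cite[3.2]{Tay19}, so that part is fine.

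The uniqueness argument, however, has a genuine gap. First, the ambiguity coming out of the isogeny theorem is precomposition with $\Ad_t$ for $t$ in the \emph{source torus} $\bT_0'^{\ast}$, not post-composition by $\rN_{\bG^{\ast}}(\bT_0^{\ast})$; you have the side and the group wrong. Second, your claim that comparing two duals forces $g^{\ast}\in\rN_{\bG'^{\ast}}(\bT_0'^{\ast})$ is unjustified and in general false: the two duals $\phi^{\ast}$ and $\psi$ may satisfy \eqref{eq:dual-isotypies} with \emph{different} elements $g^{\ast}$ and $y$, and after aligning the tori one only obtains $\psi=\phi^{\ast}\circ\Ad_h$ for some $h\in\bG'^{\ast}$, with no torus-normalizer constraint at all. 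This is what the paper does, using conjugacy of maximal tori and the torus-level uniqueness of \cite[Thm.~3.8]{Tay19}.

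The step you wave away as ``after the Frobenius constraint'' is the real content. From $F^{\ast}\circ\phi^{\ast}=\phi^{\ast}\circ F'^{\ast}$ and $F^{\ast}\circ\psi=\psi\circ F'^{\ast}$ one gets $\phi^{\ast}\circ\Ad_a=\phi^{\ast}$ where $a=F'^{\ast}(h)h^{-1}$. It does \emph{not} immediately follow that $a\in\rZ(\bG'^{\ast})$: one must argue that the map $\pi(x)=x^{-1}\cdot{}^a x$ is a homomorphism $\bG'^{\ast}\to\ker(\phi^{\ast})\leqslant\rZ(\bG'^{\ast})$, that $\bG'^{\ast}_{\der}\leqslant\ker(\pi)$ because the target is abelian, and that $\rZ(\bG'^{\ast})\leqslant\ker(\pi)$ trivially, so $\pi$ is identically $1$ and $a\in\rZ(\bG'^{\ast})$. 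Only then does Lemma~\ref{lem:normalizer-is-lang-preimage} give $h\in\rN_{\bG'^{\ast}}(G'^{\ast})$. (Equivalently one can note $\phi^{\ast}(a)$ centralises $\phi^{\ast}(\bG'^{\ast})\supseteq\bG^{\ast}_{\der}$, hence lies in $\rZ(\bG^{\ast})$, and apply Lemma~\ref{lem:isotypy-der-diag}(ii).) Your sketch does not supply this argument, and the intermediate claim $g^{\ast}\in\rN_{\bG'^{\ast}}(\bT_0'^{\ast})$ cannot be used as a substitute.
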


\begin{proof}
Note that $\phi(\bT_0)$ is a torus so is contained in a maximal torus of $\bG'$.
By the conjugacy of maximal tori there exists an element $g' \in \bG'$ such that
${}^{g'}\phi(\bT_0) \leqslant \bT_0'$. As $\bG' = \bG_{\der}'\cdot
\rZ^{\circ}(\bG')$ we can assume that $g' \in \bG_{\der}'$. Using (i) of
Lemma~\ref{lem:isotypy-der-diag} there is an element $g \in \bG_{\der}$ such
that $\phi(g) = g'$ and so $\phi({}^g\bT_0) \leqslant \bT_0'$.
The composition $\tilde\phi = \phi\circ \Ad_g$ is an isotypy $(\bG,\tilde F) \to
(\bG',F')$, where $\tilde F = \Ad_{\Lang(g)}\circ F$, which satisfies
$\tilde\phi(\bT_0) \leqslant \bT_0'$. Using the bijections stated in
Section~\ref{sec:DMmap} we see that we have a bijection
\begin{equation*}
    {}^* : \Hom(\bT_0,\bT_0')
    \overset{\sim}{\longrightarrow}
    \Hom(\bT_0'^{\ast},\bT_0^{\ast}),
\end{equation*}
which is defined by requiring that $X(f) =
\delta^{-1}\circ\wc{X}(f^*)\circ\delta'$.

If $\tilde f = \tilde\phi|_{\bT_0}$ then as $\tilde\phi$ is an isotypy $X(\tilde
f)$ defines a $p$-morphism of root data as defined in \cite[3.2]{Tay19}. It
follows that $\wc{X}(\tilde f^{\ast})$ will be a $p$-morphism of root data,
because $X(\tilde f)$ is, and so $X(\tilde f^{\ast})$ will be as well. By an
extension of the isogeny theorem, see \cite[Thm~3.8]{Tay19} and the references
therein, there exists an isotypy $\phi^{\ast} : \bG'^{\ast} \to \bG^{\ast}$ such
that $\phi^{\ast}(\bT_0'^{\ast}) \leqslant \bT_0^{\ast}$ and
$\phi^{\ast}|_{\bT_0'^{\ast}} = \tilde f^{\ast}$, which is then dual to $\phi$.

We now consider the unicity of $\phi^*$. If $h \in \rN_{\bG'^*}(G'^*)$ and $\psi
= \phi^* \circ \Ad_h$ then certainly $F^*\circ \psi = \psi \circ F'^*$ so $\psi$
is an isogeny $(\bG'^*,F'^*) \to (\bG^*,F^*)$. As $\psi \circ \Ad_{h^{-1}g^*} =
\phi^* \circ \Ad_{g^*}$ it follows that $\phi$ and $\psi$ are
dual.

Conversely, suppose $\phi$ and $\psi$ are dual and let $y \in \bG'^*$ be an
element such that $\psi({}^y\bT_0'^*) \leqslant \bT_0^*$. By the conjugacy of
maximal tori there exists an element $x \in \bG'^*$ such that ${}^{xy}\bT_0'^* =
{}^{g^*}\bT_0'^*$. Hence $\psi' = \psi\circ \Ad_{x^{-1}}$ satisfies
$\psi'({}^{g^*}\bT_0'^*) \leqslant \bT_0^*$. Because $\psi$ and $\phi^*$ both
satisfy \eqref{eq:dual-isotypies} we must have
\begin{equation*}
\wc{X}(\psi' \circ \Ad_{xy}) 
= 
\wc{X}(\psi \circ \Ad_y) 
= 
\wc{X}(\phi^* \circ \Ad_{g^*}).
\end{equation*}
This implies $\psi'\circ \Ad_{xyt} = \phi^* \circ \Ad_{g^*}$ for some $t \in
\bT_0'^*$, see \cite[Thm~3.8]{Tay19}. In particular, $\psi = \phi^*\circ\Ad_h$
for some $h \in \bG'^*$.

As $\psi$ and $\phi^*$ both commute with $F'^*$ and $F^*$, which are bijective,
we must have $\phi^*\Ad_{F'^*(h)h^{-1}} = \phi^*$. Hence, there exists a
homomorphism $\pi : \bG'^* \to \ker(\phi^*) \leqslant \rZ(\bG'^*)$ such that
$\Ad_{F'^*(h)h^{-1}}(x) = x\pi(x)$ for all $x \in \bG'^*$. However
$\bG_{\der}'^* \leqslant \ker(\pi)$, because $\ker(\phi^*)$ is abelian, and
$\rZ(\bG'^*)$ must also be in $\ker(\pi)$ by definition. Therefore $\pi$ is
trivial so $F'^*(h)h^{-1} \in \rZ(\bG'^*)$ and we may apply
Lemma~\ref{lem:normalizer-is-lang-preimage}.
\end{proof}

We can now give the analogue of \cite[Prop.~7.2]{Tay16} for arbitrary isotypies.

\begin{proposition}\label{prop:isotypyseries}
Assume $\phi : (\bG,F) \to (\bG',F')$ and $\phi^{\ast} : (\bG'^{\ast},F'^{\ast})
\to (\bG^{\ast},F^{\ast})$ are dual isotypies. If $\chi' \in
\mathcal{E}(G',s')$, for some semisimple $s' \in G'^{\ast}$, then $\irr(G \mid
{}^{\top}\phi(\chi')) \subseteq \mathcal{E}(G,\phi^{\ast}(s'))$.
\end{proposition}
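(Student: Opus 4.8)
The plan is to reduce the statement to the case of a \emph{surjective} isotypy together with the case of a \emph{regular embedding}, for both of which the corresponding result on Lusztig series is already available, and then to glue these using the fact that every isotypy factors as a surjection followed by (the inverse image under) a regular-type embedding. First I would recall the standard factorization: an isotypy $\phi : (\bG,F) \to (\bG',F')$ can be written as $\phi = \psi \circ \iota$ where $\iota : (\bG,F) \hookrightarrow (\wt\bG,\wt F)$ is a regular embedding and $\psi : (\wt\bG,\wt F) \to (\bG',F')$ is a \emph{surjective} isotypy (with central torus kernel on the $\wt\bG$-side, plus the finite central kernel of the original $\phi$ transported appropriately). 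On the dual side this corresponds to $\phi^* = \iota^* \circ \psi^*$, with $\psi^*$ an injective isotypy onto a subgroup whose derived group is all of $\wt\bG^*_{\der}$, i.e.\ $\psi^*$ is of ``regular-embedding type'', and $\iota^*$ a surjection with central-torus kernel. Since $\tw{\top}\phi = \tw{\top}\iota \circ \tw{\top}\psi$, it suffices to prove the containment separately for $\iota$ and for $\psi$.

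Next I would handle the surjective case. Suppose $\phi$ is a surjective isotypy with kernel $\bK \leqslant \rZ(\bG)$. Here the key input is Proposition~\ref{prop:surj-isotyp-DL} (or more directly Corollary~\ref{cor:R_T^G} applied to tori, where $\bL = \bT = \rZ(\bT)$ is connected, so that $\tw{\top}\phi \circ R_{\bT'}^{\bG'} = R_{\bT}^{\bG}\circ\tw{\top}\phi$ with no averaging). Take $\bT'^* \leqslant \bG'^*$ an $F'^*$-stable maximal torus and $\bT' \leqslant \bG'$ a dual $F'$-stable maximal torus; set $\bT = \phi^{-1}(\bT')$ and $\bT^* = \phi^*(\bT'^*)$, which are dual $F$- resp.\ $F^*$-stable maximal tori by Definition~\ref{def:dual-isotypy}. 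One checks from the duality-compatibility \eqref{eq:dual-isotypies} that the character $\wh{s'}$ of $\bT'^F$ pulls back along $\bT^F \to \bT'^F$ to $\wh{\phi^*(s')}$ (restricted to $\bT^F$); this is the standard computation matching the isomorphisms $\bT'^{*F'^*} \to \irr(\bT'^{F'})$ and $\bT^{*F^*} \to \irr(\bT^F)$ through $\phi^*$ and $\phi$. Then for $\chi' \in \cE(G',s')$ we have $\langle \chi', R_{\bT'}^{\bG'}(\wh{s'})\rangle \neq 0$ for a suitable such $\bT'$, and by the commutation $\tw{\top}\phi(R_{\bT'}^{\bG'}(\wh{s'})) = R_{\bT}^{\bG}(\tw{\top}\phi(\wh{s'})) = R_{\bT}^{\bG}(\wh{\phi^*(s')})$. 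Since $\tw{\top}\phi(\chi')$ is a sum of irreducible characters of $G$, each constituent $\chi$ must appear in some $R_{\bT}^{\bG}(\wh{\phi^*(s')})$, hence lies in $\cE(G,\phi^*(s'))$ by the definition of the rational Lusztig series.

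For the regular-embedding case $\iota : \bG \hookrightarrow \wt\bG$ with $\iota(\bG)_{\der} = \wt\bG_{\der}$ and $\iota^* : \wt\bG^* \twoheadrightarrow \bG^*$, this is precisely the classical statement that restriction from $\wt G$ to $G$ sends $\cE(\wt G,\tilde s)$ into $\cE(G,\iota^*(\tilde s))$ --- which is \cite[Prop.~7.2]{Tay16} (or the Bonnaf\'e--Michel version), and the present Proposition is introduced explicitly as the generalization of \cite[Prop.~7.2]{Tay16}, so I would cite it in that case; alternatively one runs the same $R_{\bT}$-argument as above, now using that $\tw{\top}\iota$ is literally restriction and $R_{\bT}^{\bG}\circ\res = \res\circ R_{\wt\bT}^{\wt\bG}$ for a compatible pair of tori. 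Finally I would reassemble: given $\chi' \in \cE(G',s')$, write $s' = \iota^*(\psi^*(s'))$ using the factorization $\phi^* = \iota^*\circ\psi^*$... actually I factor on the other side, so let me instead run $\phi = \psi\circ\iota$: apply the surjective case to $\psi$ to land in $\cE(\wt G, \psi^*(s'))$, then apply the regular-embedding case to $\iota$ to land in $\cE(G, \iota^*(\psi^*(s'))) = \cE(G,\phi^*(s'))$, using that $\irr(G\mid \tw{\top}\iota(-))$ of something in $\irr(\wt G\mid\tw{\top}\psi(\chi'))$ is a subset of $\irr(G\mid\tw{\top}\phi(\chi'))$ by transitivity of constituents. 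The main obstacle I expect is the bookkeeping around dual isotypies: verifying that with $\bT = \phi^{-1}(\bT')$, $\bT^* = \phi^*(\bT'^*)$ the pulled-back linear character $\tw{\top}\phi(\wh{s'})$ is exactly $\wh{\phi^*(s')}|_{\bT^F}$ --- this requires carefully unwinding \eqref{eq:dual-isotypies}, the cocharacter/character functoriality, and the fixed choices of $\FF^\times \hookrightarrow \CC^\times$ and $\FF^\times \to (\QQ/\ZZ)_{p'}$ --- and making sure the witnesses to duality chosen for $\bT,\bT^*$ are of the inherited form so that the pairing identities in Theorem~\ref{UniqueJord}(1) and the definition of $\cE(G,s)$ apply verbatim.
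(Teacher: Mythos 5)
Your proposal has a genuine gap at the very start. The claimed factorization $\phi = \psi \circ \iota$, with $\iota : (\bG,F) \hookrightarrow (\wt\bG,\wt F)$ a regular embedding and $\psi : (\wt\bG,\wt F) \to (\bG',F')$ a \emph{surjective} isotypy, does not exist in general. For a counterexample take $\phi = \mathrm{id} : \mathrm{SL}_2 \to \mathrm{SL}_2$: if $\psi : \wt\bG \twoheadrightarrow \mathrm{SL}_2$ were a surjective isotypy with $\rZ(\wt\bG)$ connected, then $\rZ(\wt\bG)$ would have to map onto a connected subgroup of $\rZ(\mathrm{SL}_2) = \mu_2$, hence trivially, forcing $\ker\psi = \rZ(\wt\bG)$ and $\mathrm{SL}_2 \cong \wt\bG/\rZ(\wt\bG)$ to be adjoint, a contradiction. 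The factorization that \emph{does} exist is $\bG \twoheadrightarrow \phi(\bG) \hookrightarrow \bG'$, i.e.\ surjection first then inclusion, but that inclusion is not a regular embedding since $\rZ(\bG')$ may be disconnected, so \cite[Prop.~7.2]{Tay16} does not apply to it as you cite it.

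The good news is that your ``surjective case'' argument is essentially the paper's proof, and it applies \emph{directly} to an arbitrary isotypy $\phi$ with no factorization at all: as you yourself observe, when the Levi is a torus one has $\bL = \bT = \rZ(\bT)$ connected, so Corollary~\ref{cor:R_T^G} gives $\tw{\top}\phi \circ R_{\bT'}^{\bG'} = R_{\phi^{-1}(\bT')}^{\bG}\circ \tw{\top}\phi$ unconditionally, regardless of whether $\phi$ is surjective. You had all the ingredients; the detour through the (nonexistent) factorization is both wrong and unnecessary. There is a second, smaller gap in the surjective-case argument as written: from $\langle\chi', R_{\bT'}^{\bG'}(\wh{s'})\rangle \neq 0$ you cannot conclude that the constituents of $\tw{\top}\phi(\chi')$ occur in $\tw{\top}\phi\bigl(R_{\bT'}^{\bG'}(\wh{s'})\bigr)$, because $R_{\bT'}^{\bG'}(\wh{s'})$ is a virtual character and cancellation may occur after applying $\tw{\top}\phi$. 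The paper avoids this by invoking the uniformity of the regular character \cite[Cor.~10.2.6]{dmbook2}: the $s'$-part of $\mathrm{reg}_{G'}$ is both a nonnegative combination of the characters in $\cE(G',s')$ and a uniform function, which after applying $\tw{\top}\phi$ simultaneously rules out cancellation and places the result in the $\QQ$-span of the $R_{\bT}^{\bG}(\wh{\phi^*(s')})$. With that adjustment and the factorization removed, your central computation matching $(\bT',\wh{s'})$ with $(\phi^{-1}(\bT'),\wh{\phi^*(s')})$ via \eqref{eq:dual-isotypies} is exactly what the paper cites from \cite[Prop.~7.2]{Tay16}.
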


\begin{proof}
Arguing as in the proof of \cite[Prop.~11.7(a)]{Bon06}, using the uniformity of
the regular character \cite[Cor.~10.2.6]{dmbook2}, we see that if $\chi \in
\irr(\bG^F)$ is an irreducible constituent of ${}^{\top}\phi(\chi')$ then $\chi$
occurs with non-zero multiplicity in some
$R_{\phi^{-1}(\bT')}^{\bG}({}^{\top}\phi(\theta'))$, where $(\bT',\theta')$ is
dual to some $(\bT'^{\ast},s')$.

We now just need to show that if $(\bT',\theta')$ corresponds to
$(\bT'^{\ast},s')$ then $(\bT,\theta)$ corresponds to
$(\bT^{\ast},\phi^{\ast}(s'))$. The argument here is exactly the same as that
given in the proof of \cite[Prop.~7.2]{Tay16}, which we note relies only on the
property in \eqref{eq:dual-isotypies}. 
\end{proof}

The $G'^*$-conjugacy class of $s'$ and ${}^hs'$ is the same for any $h \in
\rN_{\bG'^*}(G'^*)$. This is because $\rC_{\bG'^*}(s')$ contains a maximal torus
of $\bG'^*$, so $\rZ(\bG'^*)$ is contained in the connected component of
$\rC_{\bG'^*}(s')$. Hence, the series $\mathcal{E}(\bG^F,\phi^*(s'))$ is the
same regardless of which dual isotypy we pick by Lemma~\ref{lem:dual-isotypies}.

We now consider some consequences of these statements in our setting where
$\rC_{G^{\ast}}(s) \leqslant \rC_{\bG^{\star}}^{\circ}(s)$. Firstly, this
assumption is preserved by arbitrary isotypies.

\begin{lemma}\label{lem:Jaylemma}
Let $\phi : (\bG, F) \rightarrow (\bG', F')$ be an isotypy with dual isotypy
$\phi^{\ast} : (\bG'^{\ast},F'^{\ast}) \to (\bG^{\ast},F^{\ast})$. If $s' \in
G'^\ast$ is a semisimple element and $s = \phi^\ast(s')$ satisfies
$\rC_{G^{\ast}}(s) \leqslant \rC_{\bG^{\star}}^{\circ}(s)$, then
$\rC_{G'^{\ast}}(s') \leqslant \rC_{\bG'^{\star}}^{\circ}(s')$.
\end{lemma}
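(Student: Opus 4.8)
The plan is to translate the containment of finite centralizers into a containment of fixed points that is visibly transported by $\phi^\ast$. First I would recall the general fact that a semisimple element $t$ in a connected reductive group $\bH$ satisfies $\rC_{\bH}(t)^{F} \leqslant \rC_{\bH}^\circ(t)$ if and only if the finite group $A_{\bH}(t)^{F} := \bigl(\rC_{\bH}(t)/\rC_{\bH}^\circ(t)\bigr)^{F}$ is trivial (this is exactly the reformulation used in the statement of Theorem~\ref{thm:main}, coming from the fact that $\rC_{\bH}^\circ(t)$ is connected so $\Lang$ is surjective on it). Thus the hypothesis says $A_{\bG^\ast}(s)^{F^\ast} = 1$ and the goal is to show $A_{\bG'^\ast}(s')^{F'^\ast} = 1$, where $s = \phi^\ast(s')$.

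Next I would exploit the structure of the dual isotypy $\phi^\ast : (\bG'^\ast,F'^\ast) \to (\bG^\ast,F^\ast)$: its kernel is central, $\ker(\phi^\ast) \leqslant \rZ(\bG'^\ast)$, and $\bG^\ast = \phi^\ast(\bG'^\ast)\cdot\rZ^\circ(\bG^\ast)$. The key algebraic point is that $\phi^\ast$ induces an isomorphism of component groups $A_{\bG'^\ast}(s') \xrightarrow{\sim} A_{\bG^\ast}(s)$: indeed, $\phi^\ast\bigl(\rC_{\bG'^\ast}(s')\bigr) = \rC_{\bG^\ast}(s) \cap \phi^\ast(\bG'^\ast)$ (using that the preimage of $\rC_{\bG^\ast}(s)$ under $\phi^\ast$ is $\rC_{\bG'^\ast}(s')\cdot\ker(\phi^\ast)$, which equals $\rC_{\bG'^\ast}(s')$ since the kernel is central hence in every centralizer), and since $\ker(\phi^\ast) \leqslant \rZ(\bG'^\ast) \leqslant \rC_{\bG'^\ast}^\circ(s')$ the map on component groups is injective; surjectivity follows because $\rZ^\circ(\bG^\ast)$ is connected, so it does not contribute to the component group of $\rC_{\bG^\ast}(s) = \bigl(\rC_{\bG^\ast}(s)\cap\phi^\ast(\bG'^\ast)\bigr)\cdot\rZ^\circ(\bG^\ast)$. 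This isomorphism is $F$-equivariant: $\phi^\ast$ intertwines $F'^\ast$ and $F^\ast$ by hypothesis. Therefore $A_{\bG'^\ast}(s')^{F'^\ast} \cong A_{\bG^\ast}(s)^{F^\ast} = 1$, which is precisely the desired conclusion.

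The step I expect to be the main obstacle is justifying that $\phi^\ast$ carries $\rC_{\bG'^\ast}(s')$ onto $\rC_{\bG^\ast}(s)\cap\phi^\ast(\bG'^\ast)$ and induces an \emph{isomorphism} — not merely a surjection — on component groups; this requires a careful bookkeeping of where the central kernel sits and the use of connectedness of $\rZ^\circ(\bG^\ast)$ to control the "extra" part of $\rC_{\bG^\ast}(s)$ coming from the central torus. The analogous statement for isogenies is in the literature (e.g.\ in the spirit of \cite[Prop.~7.2]{Tay16} and the discussion around dual isogenies), so I would cite or adapt that, being careful that $\ker(\phi^\ast)$ is only a central torus rather than trivial. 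Everything else is formal: unwinding the equivalence between the containment of $F$-fixed centralizers and triviality of the $F$-fixed component group, and transporting through the $F$-equivariant isomorphism of component groups. I would keep the proof to a few lines, pointing to Lemma~\ref{lem:dual-isotypies} for the existence and essential uniqueness of $\phi^\ast$ (so that the statement is well-posed independent of the choice of dual) and to the standard fact $\rC_{\bG^\ast}^\circ(s) \supseteq \rZ^\circ(\bG^\ast)$.
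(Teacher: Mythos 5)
The structural plan — reformulate the hypothesis as triviality of $A_{\bG^{\ast}}(s)^{F^{\ast}}$ and transport the component group through $\phi^{\ast}$ — is the right idea and matches the spirit of the paper's (very terse) proof, but the central algebraic claim in your argument is false, and the injectivity step, which is what you actually need, is not justified.

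The claim that $\phi^{\ast}$ induces an \emph{isomorphism} $A_{\bG'^{\ast}}(s') \to A_{\bG^{\ast}}(s)$ is not correct, and in particular the preimage identity $(\phi^{\ast})^{-1}(\rC_{\bG^{\ast}}(s)) = \rC_{\bG'^{\ast}}(s')\cdot\ker(\phi^{\ast})$ is wrong. The problem is quasi-isolated behaviour: an element $h \in \bG'^{\ast}$ with $hs'h^{-1} = s'z$ for a nontrivial $z \in \ker(\phi^{\ast})$ lies in $(\phi^{\ast})^{-1}(\rC_{\bG^{\ast}}(s))$ but not in $\rC_{\bG'^{\ast}}(s')$. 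For a concrete failure of your isomorphism, take $\phi^{\ast}\colon \mathrm{SL}_2 \to \mathrm{PGL}_2$ and $s' = \mathrm{diag}(i,-i)$ with $i^2=-1$; then $\rC_{\mathrm{SL}_2}(s')$ is a (connected) maximal torus, so $A_{\bG'^{\ast}}(s') = 1$, while $\rC_{\mathrm{PGL}_2}(s)$ is the full normaliser of a maximal torus, so $A_{\bG^{\ast}}(s) \cong \ZZ/2\ZZ$. Here the Weyl representative $h$ satisfies $hs'h^{-1} = -s'$, exhibiting the failure of your preimage claim. Your injectivity argument (``since $\ker(\phi^{\ast}) \leqslant \rC^{\circ}_{\bG'^{\ast}}(s')$'') is also incomplete: it only rules out kernel elements coming from $\ker(\phi^{\ast})$, not the possibility that a nonidentity component of $\rC_{\bG'^{\ast}}(s')$ lands in $\rC^{\circ}_{\bG^{\ast}}(s)$.

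What is true, and what suffices, is that the induced map $A_{\bG'^{\ast}}(s') \to A_{\bG^{\ast}}(s)$ is \emph{injective}, and equivariantly so. To see this you need Bonnaf\'e's structure identity
\begin{equation*}
\phi^{\ast}\bigl(\rC^{\circ}_{\bG'^{\ast}}(s')\bigr)\cdot\rZ^{\circ}(\bG^{\ast}) = \rC^{\circ}_{\bG^{\ast}}(s),
\end{equation*}
which is \cite[Eq.~(2.2)]{bonnafequasiisol} and is exactly what the paper invokes. Then for $x' \in \rC_{G'^{\ast}}(s')$ the image $\phi^{\ast}(x')$ lies in $\rC_{G^{\ast}}(s) \leqslant \rC^{\circ}_{\bG^{\ast}}(s)$, hence $\phi^{\ast}(x') = \phi^{\ast}(y)z$ with $y \in \rC^{\circ}_{\bG'^{\ast}}(s')$ and $z \in \rZ^{\circ}(\bG^{\ast})$; by Lemma~\ref{lem:isotypy-der-diag}(ii) we get $x'y^{-1} \in (\phi^{\ast})^{-1}(\rZ(\bG^{\ast})) = \rZ(\bG'^{\ast}) \leqslant \rC^{\circ}_{\bG'^{\ast}}(s')$, and therefore $x' \in \rC^{\circ}_{\bG'^{\ast}}(s')$. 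So the repair is to drop the surjectivity claim entirely, quote \cite[Eq.~(2.2)]{bonnafequasiisol} for the connected-centraliser decomposition, and carry out the elementary lifting argument above; this is precisely what the paper does.
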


\begin{proof}
By \cite[Eq.~(2.2)]{bonnafequasiisol} we have
$\varphi^{\ast}(\rC_{\bG_1^{\ast}}^{\circ}(s_1))\cdot \rZ^{\circ}(\bG^{\ast}) =
\rC_{\bG^{\ast}}^{\circ}(s)$ and, arguing as above,
$\rC_{\bG_1^{\ast}}^{\circ}(s_1)$ contains $\ker(\varphi^{\ast})$.
\end{proof}

In the setting of Proposition~\ref{prop:isotypyseries} it is not necessarily the
case that ${}^{\top}\phi(\chi')$ is irreducible. We will show that this is the
case under our assumption that $\rC_{G^{\ast}}(s) \leqslant
\rC_{\bG^{\star}}^{\circ}(s)$. First we recall the case where $\phi$ is a
regular embedding \cite{Lu88}.

\begin{lemma}\label{restrictionbij}
Assume $\iota : (\bG,F) \to (\wt{\bG},F)$ is a regular embedding and $\tilde{s}\in
\wt{G}^{\ast}$ is a semisimple element. If $s = \iota^{\ast}(\tilde{s})$ satisfies
$\rC_{G^{\star}}(s) \leqslant \rC_{\bG^{\star}}^{\circ}(s)$ then
$\tw{\top}\iota$ induces a bijective map
$\cE(\wt{G},\tilde{s})\rightarrow\cE(G,s)$.
\end{lemma}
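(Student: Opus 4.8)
The plan is to show that ${}^{\top}\iota$ sends $\cE(\wt G,\tilde s)$ into $\cE(G,s)$, that every character in this image is irreducible, and that distinct characters of $\cE(\wt G,\tilde s)$ have disjoint restrictions; together with a count of the two series this gives the bijection. The inclusion ${}^{\top}\iota(\cE(\wt G,\tilde s)) \subseteq \irr(G\mid \text{something in }\cE(G,s))$ is immediate from Proposition~\ref{prop:isotypyseries}, since a regular embedding is an injective isotypy and $\iota^{\ast}$ is its dual. Multiplicity-freeness of $\res^{\wt G}_G$ is exactly the consequence of Theorem~\ref{thm:multfree} discussed after Lemma~\ref{prop:stabhatG} (we have $\rO^{p'}(\wt G)\leqslant G\leqslant \wt G\leqslant \rN_{\wt\bG}(\wt G)$), so each ${}^{\top}\iota(\tilde\chi)$ is a sum of distinct irreducible characters. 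The crux is therefore to show each such restriction is a \emph{single} irreducible character, equivalently that $\wt G_{\tilde\chi} = \wt G$ for every $\tilde\chi\in\cE(\wt G,\tilde s)$; this is where the hypothesis $\rC_{G^{\ast}}(s)\leqslant\rC^{\circ}_{\bG^{\ast}}(s)$ must enter.

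For that step I would use the standard description of the stabilizer of a character in a Lusztig series under the twisting action of $\wt G/G \cong \irr(\wt G^F/(\wt G^F\cap\ldots))$: tensoring by a linear character $\hat z$ with $z\in\ker(\iota^{\ast})^{F^\ast}$-type data permutes $\cE(\wt G,\tilde s)$, and by Lusztig's work the relevant stabilizer of $\tilde\chi$ corresponds (via $\iota^\ast$) to the component group $(\rC_{\bG^\ast}(s)/\rC^\circ_{\bG^\ast}(s))^{F^\ast}$ — see the analogous arguments in \cite{Lu88} and the treatment via \cite[Prop.~11.7]{Bon06}. Concretely, the characters of $\wt G/G$ fixing $\tilde\chi$ are parametrized by (a subquotient of) $\rC_{\wt G^\ast}(\tilde s)/\rC^\circ_{\wt G^\ast}(\tilde s)\cdot\ker\iota^\ast$, which maps onto $(\rC_{\bG^\ast}(s)/\rC^\circ_{\bG^\ast}(s))^{F^\ast}$; our hypothesis forces this to be trivial, hence no nontrivial linear character of $\wt G/G$ fixes $\tilde\chi$, hence (as $\wt G/G$ is abelian and acts on $\cE(\wt G,\tilde s)$ with the restrictions to $G$ as orbit sums) $\res^{\wt G}_G\tilde\chi$ is irreducible. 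Dually, this same triviality shows that the ${}^{\top}\iota(\tilde\chi)$ for $\tilde\chi$ ranging over $\cE(\wt G,\tilde s)$ are pairwise distinct: if two had a common constituent they would lie in a common $\wt G/G$-orbit, forcing a nontrivial stabilizer.

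It remains to match cardinalities. Using $\cE(\wt G,\tilde s) = \cE(\wt G,1)\otimes\hat{\tilde s}$ and similarly on $G$, together with the fact that $\iota^\ast$ restricts to an isomorphism $\rC^\circ_{\wt\bG^\ast}(\tilde s)\to\rC^\circ_{\bG^\ast}(s)$ on the relevant connected centralizers (by \cite[Eq.~(2.2)]{bonnafequasiisol} as in Lemma~\ref{lem:Jaylemma}, since $\ker\iota^\ast$ is central), the existing Jordan decomposition of Lusztig gives $\abs{\cE(\wt G,\tilde s)} = \abs{\cE(\rC_{\wt G^\ast}(\tilde s),1)}$ and $\abs{\cE(G,s)} = \abs{\cE(\rC_{G^\ast}(s),1)}$, and under the hypothesis $\rC_{G^\ast}(s)=\rC^\circ_{\wt\bG^\ast}(\tilde s)^{F^\ast}$-type identifications these two unipotent-series counts agree. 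So the injective map ${}^{\top}\iota : \cE(\wt G,\tilde s)\to\cE(G,s)$ is a bijection. The main obstacle I anticipate is pinning down precisely, with references, the identification of the stabilizer of $\tilde\chi$ in $\wt G/G$ with the $F^\ast$-fixed component group — the representation-theoretic bookkeeping there (and ensuring it is insensitive to the choice of preimage $\tilde s$) is the delicate part; everything else is formal once that is in hand.
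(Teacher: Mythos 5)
Your plan follows essentially the same route as the paper's proof, and the ``delicate part'' you flag at the end is precisely what the paper sidesteps by quoting Lusztig directly. The paper's proof is three lines: irreducibility of each ${}^{\top}\iota(\tilde\chi)$ and the equality $\abs{\cE(\wt G,\tilde s)} = \abs{\cE(G,s)}$ are both read off from \cite[Prop.~5.1]{Lu88} (see also \cite[Prop.~11.5.2]{dmbook2}), and the surjectivity of $\tilde\chi \mapsto {}^{\top}\iota(\tilde\chi)$ onto $\cE(G,s)$ comes from $\cE(G,s) = \bigcup_{\tilde\chi} \irr(G \mid {}^{\top}\iota(\tilde\chi))$, which is \cite[Prop.~11.7]{Bon06}. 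Lusztig's Proposition~5.1 is exactly the statement that $\res^{\wt G}_G(\tilde\chi)$ has $\bigl|(\rC_{\bG^{\ast}}(s)/\rC^{\circ}_{\bG^{\ast}}(s))^{F^{\ast}}\bigr|$ irreducible constituents and that the $\cE(\wt G,\tilde s)$ and $\cE(G,s)$ have the expected cardinalities, so your hypothesis makes the restriction irreducible without any further bookkeeping. You don't need to re-derive the Clifford-theory dictionary between the size of the $\wt G$-orbit of a constituent and the stabilizer of $\tilde\chi$ under twisting by $\irr(\wt G/G)$, nor the Jordan-decomposition count at the end -- those are already packaged in the same citation.

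One small warning: the expression $\rC_{\wt G^{\ast}}(\tilde s)/\rC^{\circ}_{\wt G^{\ast}}(\tilde s)\cdot\ker\iota^{\ast}$ in your sketch is not well-formed -- $\wt G^{\ast}$ is a finite group, so ``$\rC^{\circ}$'' is meaningless there, and in any case $\rC_{\wt\bG^{\ast}}(\tilde s)$ is already connected because $\rZ(\wt\bG)$ is, so whatever component group you intend on the $\wt\bG^{\ast}$ side is trivial. The group that actually controls the stabilizer lives downstairs: it is $(\rC_{\bG^{\ast}}(s)/\rC^{\circ}_{\bG^{\ast}}(s))^{F^{\ast}}$, obtained via $\iota^{\ast}$; the identification is the content of \cite[Prop.~5.1]{Lu88}. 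Citing that result directly, as the paper does, closes your gap cleanly.
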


\begin{proof}
It follows from \cite[Prop.~5.1]{Lu88}, see also \cite[Prop.~11.5.2]{dmbook2},
that ${}^{\top}\iota(\tilde\chi)$ is irreducible for any $\tilde\chi \in
\cE(\wt{G},\tilde{s})$. Now $\cE(G,s) = \bigcup_{\tilde\chi \in \cE(\wt{G},\tilde{s})}
\irr(G\mid {}^{\top}\iota(\tilde\chi))$ by \cite[Prop.~11.7]{Bon06} and both
$\cE(G,s)$ and $\cE(\wt G,\tilde s)$ have the same cardinality by
\cite[Prop.~5.1]{Lu88}.
\end{proof}

\begin{lemma}\label{lem:transposephi}
Assume $\phi : (\bG,F) \to (\bG',F')$ and $\phi^{\ast} : (\bG'^{\ast},F'^{\ast})
\to (\bG^{\ast},F^{\ast})$ are dual isotypies and $s' \in G'^{\ast}$ is a
semisimple element. If $s = \phi^{\ast}(s')$ satisfies $\rC_{G^{\ast}}(s)
\leqslant \rC_{\bG^{\star}}^{\circ}(s)$ then $\tw{\top}\phi$ induces a bijective
map $\cE(G',s')\to\cE(G,s)$.
\end{lemma}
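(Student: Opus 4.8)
The plan is to factor an arbitrary isotypy through a regular embedding, so as to reduce to the case already treated in Lemma~\ref{restrictionbij}. Concretely, I would choose a regular embedding $\iota : (\bG,F) \to (\wt\bG,\wt F)$ together with a regular embedding $\iota' : (\bG',F') \to (\wt\bG',\wt F')$, and extend $\phi$ to an isotypy $\wt\phi : (\wt\bG,\wt F) \to (\wt\bG',\wt F')$ with $\wt\phi\circ\iota = \iota'\circ\phi$. Such a simultaneous extension can be arranged: build $\wt\bG'$ so that its cocharacter lattice accommodates both $\rZ^\circ(\wt\bG)$ pushed along $\phi$ and any extra central torus needed for $\rZ(\wt\bG')$ to be connected (this is the standard construction of regular embeddings compatible with a morphism; one can also take $\wt\bG' = (\wt\bG \times \bG')/\bG$ with $\bG$ embedded anti-diagonally). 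Dualizing, and using Lemma~\ref{lem:dual-isotypies} to pin down the dual isotypies, we get a commutative square of dual surjections relating $\wt\phi^*$, $\iota^*$, $\iota'^*$ and $\phi^*$.

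Next I would lift the semisimple element: pick $\wt s' \in \wt G'^*$ with $\iota'^*(\wt s') = s'$, and set $\wt s = \wt\phi^*(\wt s') \in \wt G^*$, so that $\iota^*(\wt s) = \phi^*(\iota'^*\cdot\text{(correction)})$ — more precisely, by commutativity of the dual square, $\iota^*(\wt s) = \phi^*(s') = s$. Since $\rC_{G^*}(s) \leqslant \rC^\circ_{\bG^*}(s)$ by hypothesis, Lemma~\ref{restrictionbij} applies to $\iota$ and $\wt s$, giving a bijection $\tw{\top}\iota : \cE(\wt G,\wt s) \to \cE(G,s)$; by Lemma~\ref{lem:Jaylemma} applied to $\iota'$ (or rather, first to $\phi$ to deduce $\rC_{G'^*}(s') \leqslant \rC^\circ_{\bG'^*}(s')$, then noting this persists under the central-torus quotient $\iota'^*$) we likewise get that $\tw{\top}\iota' : \cE(\wt G',\wt s') \to \cE(G',s')$ is a bijection. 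For the top map $\tw{\top}\wt\phi$, since $\wt\bG$ has connected center the condition $\bK \leqslant \Lang(\rZ(\bL))$ of Corollary~\ref{cor:R_T^G} is automatic (indeed $\rZ(\wt\bG)$ connected forces the kernel of $\wt\phi$ to be connected, as it is contained in $\rZ(\wt\bG)$ and... here one must be a little careful — the kernel need not be connected, but one argues instead that $\rZ(\wt\bG)$ connected implies $\Lang(\rZ(\wt\bG)) = \rZ(\wt\bG)$, so $\bK \leqslant \rZ(\wt\bG) = \Lang(\rZ(\wt\bG)) \leqslant \Lang(\rZ(\wt{\bL}))$ whenever $\rZ(\wt\bG) \leqslant \rZ(\wt{\bL})$, which holds for Levi subgroups). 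Thus $\tw{\top}\wt\phi$ commutes with Deligne--Lusztig induction, and by the uniformity argument of Proposition~\ref{prop:isotypyseries} it maps $\cE(\wt G',\wt s')$ into $\cE(\wt G,\wt s)$; that it is actually a bijection between these series follows because in the connected-center case $\tw{\top}\wt\phi(\wt\chi')$ is a single irreducible character and the series have equal cardinality (both being in bijection with the unipotent series of the respective centralizers, which match up under $\wt\phi^*$).

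Finally I would run a diagram chase on the commutative square
\begin{equation*}
\begin{tikzcd}[sep=1cm]
\cE(\wt G',\wt s') \arrow[r,"\tw{\top}\wt\phi"] \arrow[d,"\tw{\top}\iota'"'] & \cE(\wt G,\wt s) \arrow[d,"\tw{\top}\iota"] \\
\cE(G',s') \arrow[r,"\tw{\top}\phi"'] & \cE(G,s)
\end{tikzcd}
\end{equation*}
which commutes because the underlying group homomorphisms satisfy $\wt\phi\circ\iota = \iota'\circ\phi$ (hence $\iota\circ\wt\phi|_{\bG} = \phi\circ\iota'|_{\bG'}$ after identifying $\bG,\bG'$ with their images, so the corresponding restriction maps on characters agree). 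Three of the four maps are bijections, and the square commutes up to the fact that $\tw{\top}\phi$ might a priori send a character to a reducible class function; but the diagram forces $\tw{\top}\phi = \tw{\top}\iota \circ \tw{\top}\wt\phi \circ (\tw{\top}\iota')^{-1}$ as maps on $\cE(G',s')$, exhibiting $\tw{\top}\phi|_{\cE(G',s')}$ as a composition of bijections landing in $\cE(G,s)$, hence itself a bijection $\cE(G',s') \to \cE(G,s)$. The main obstacle I anticipate is the compatible simultaneous choice of regular embeddings $\iota$, $\iota'$ together with the extension $\wt\phi$ and the verification — via Lemma~\ref{lem:dual-isotypies} — that the dual diagram commutes on the nose (rather than only up to an $\Ad_h$), and keeping track of which lift $\wt s$ of $s$ to use so that all three of Lemma~\ref{restrictionbij}'s hypotheses are genuinely met along $\iota$, $\iota'$ and (implicitly) $\wt\phi$; this bookkeeping is where the real work lies, the rest being formal.
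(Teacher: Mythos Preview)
Your strategy is genuinely different from the paper's and can be made to work, but two concrete gaps keep it from going through as written.

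First, the anti-diagonal construction $(\wt\bG\times\bG')/\bG$ does not give a group: the image of $g\mapsto(\iota(g),\phi(g)^{-1})$ is not normal in $\wt\bG\times\bG'$. Conjugating $(\iota(g),\phi(g)^{-1})$ by an arbitrary $(a,b)$ and writing $a=\iota(a_0)z$, $b=\phi(b_0)z'$ forces $a_0^{-1}b_0\in\rZ(\bG)$, which fails for independent choices of $a$ and $b$. A correct lift does exist (see Lemma~\ref{lem:epilifts}, which uses $\bG\times_{\rZ(\bG)}\bT$), so this is repairable, but not by the construction you name.

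Second, and more seriously, the assertion that ``in the connected-center case $\tw{\top}\wt\phi(\wt\chi')$ is a single irreducible character'' is the heart of the matter and you give no argument for it. This is exactly the special case of the lemma with $\rZ(\bG)$ connected; commutation with Deligne--Lusztig induction (your Corollary~\ref{cor:R_T^G} observation) does not by itself yield irreducibility of the pullback. The statement is true, and the missing idea is this: for $g'\in\wt G'$, write $g'=\wt\phi(h)z'$ with $z'\in\rZ^{\circ}(\wt\bG')$; then $\Lang(g')\in\rZ(\wt\bG')$ forces $\Lang(h)\in\wt\phi^{-1}(\rZ(\wt\bG'))=\rZ(\wt\bG)$ by Lemma~\ref{lem:isotypy-der-diag}(ii), so $h\in\rN_{\wt\bG}(\wt G)$. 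But when $\rZ(\wt\bG)$ is connected, $\rN_{\wt\bG}(\wt G)=\wt G\cdot\rZ(\wt\bG)$ (since $\rZ(\wt\bG)/\Lang(\rZ(\wt\bG))=1$), so conjugation by $g'$ on $\wt\phi(\wt G)$ is inner and fixes every constituent. Combined with multiplicity-freeness this gives irreducibility.

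By contrast, the paper does not pass to a lift $\wt\phi$ at all: it proves irreducibility directly for the given $\phi$ by showing any constituent $\chi_0$ of $\res^{G'}_{\phi(G)}(\chi')$ has stabilizer all of $G'$. The argument uses a single regular embedding $\iota$ of $\bG$, Lemma~\ref{restrictionbij} to see $\chi=\tw{\top}\phi(\chi_0)$ extends to $\wt G$, then Lemma~\ref{prop:stabhatG} and Theorem~\ref{thm:multfree} to transport this to $\hat G=G\cdot\rN_{\bG_{\der}}(G)$, and finally Lemma~\ref{lem:isotypy-der-diag}(iv) to see $G'\leqslant\phi(\hat G)$. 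Equal cardinality is then read off from $\tw{\top}\phi^*$ on unipotent series of the centralizers. This is more economical: no compatible pair of embeddings, no lifted isotypy, and no dual-square bookkeeping.
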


\begin{proof}
Note that $\phi(G) \leqslant G'$ may be a proper subgroup. Given
$\chi'\in\mathcal{E}(G',s')$, we have $\tw{\top}\phi(\chi')$ is the inflation to
$G$ of $\res^{G'}_{\phi(G)}(\chi')$. We claim that this restriction is
irreducible. Let $\chi_0$ be a constituent of $\res_{\phi(G)}^{G'}(\chi')$.
Since $\phi(G)$ contains
$\mathrm{O}^{p'}(G')=\mathrm{O}^{p'}(\phi(G))=\phi(\mathrm{O}^{p'}(G))$,
restrictions from $G'$ to $\phi(G)$ are multiplicity free by
Theorem~\ref{thm:multfree}. As $G'/\phi(G)$ is abelian, it suffices to show that
$(G')_{\chi_0}=G'$. 

Let $\chi = \tw{\top}\phi(\chi_0) \in\cE(G,s)$ be the inflation of $\chi_0$ and
fix a regular embedding $\iota : (\bG,F) \to (\wt{\bG},F)$. Since
$\rC_{G^{\ast}}(s) \leqslant \rC_{\bG^{\star}}^{\circ}(s)$, we have $\chi$
extends to $\wt{G}$ by Lemma~\ref{restrictionbij}, so $\wt{G}_\chi=\wt{G}$. By
Theorem~\ref{thm:multfree} and Lemma~\ref{prop:stabhatG}, we have $\chi$ also
extends to $\hat{G} = G\cdot \rN_{\bG_{\der}}(G)$ so $\chi_0$ extends to
$\phi(\hat G)$. Now, $G' \leqslant \phi(\hat{G})$, by (ii) of
Lemma~\ref{lem:overgroups-of-G} and (iv) Lemma~\ref{lem:isotypy-der-diag}, so
$(G')_{\chi_0} = G'$.

By Proposition~\ref{prop:isotypyseries} $\tw{\top}\phi(\chi')$ lies in
$\mathcal{E}(G,s)$. Moreover, $\rC_{G^{\star}}(s) =
\rC_{\bG^{\star}}^{\circ}(s)^{F^{\ast}}$ and by Lemma~\ref{lem:Jaylemma}
$\rC_{G'^{\star}}(s) = \rC_{\bG'^{\star}}^{\circ}(s')^{F^{\ast}}$ so
\cite[Prop.~11.3.8]{dmbook2} shows that $\tw{\top}\phi^*$ defines a bijection
$\cE(\rC_{G^{\ast}}(s),1) \to \cE(\rC_{G'^{\ast}}(s'),1)$ because
$\tw{\top}\phi^{\ast}$ restricts to an isotypy $\rC_{\bG^{\ast}}^{\circ}(s) \to
\rC_{\bG'^{\ast}}^{\circ}(s')$, see \cite[Eq.~(2.2)]{Bon06}. The Jordan
decomposition now shows that $\cE(G_1,s_1)$ and $\cE(G,s)$ have the same
cardinality.
\end{proof}

\section{The Jordan Decomposition and Isotypies}\label{sec:JD-and-isotypies}
In this section we investigate how the properties in Theorem~\ref{UniqueJord}
behave with respect to isotypies. With this in mind we fix, for the rest of this
section, an isotypy $\iota : (\bG,F) \to (\bG',F')$ and a dual isotypy
$\iota^{\ast} : (\bG'^{\ast},F'^{\ast}) \to (\bG^{\ast},F^{\ast})$ and we assume
that $s' \in G'^{\ast}$ is a semisimple element such that $s = \iota^{\ast}(s')
\in G^{\ast}$ satisfies $\rC_{G^{\ast}}(s) \leqslant
\rC_{\bG^{\ast}}^{\circ}(s)$. Note that the map
\begin{equation*}
f \mapsto f_{\iota}
:= 
\tw{\top}\iota^{\ast} \circ f \circ \tw{\top}\iota
\end{equation*}
identifies the set of bijections $\cE(G,s) \to \cE(\rC_{G^{\ast}}(s),1)$ with
the corresponding set $\cE(G',s') \to \cE(\rC_{G'^{\ast}}(s'),1)$ by
Lemma~\ref{lem:transposephi}. We warn the reader that whilst we omit
$\iota^{\ast}$ from the notation, the map $f_{\iota}$ does depend on it.

\begin{proposition}\label{Jordiff12}
We have 
$f : \cE(G,s) \to \cE(\rC_{G^{\ast}}(s),1)$ 
satisfies (1) of Theorem~\ref{UniqueJord} if and only if 
$f_{\iota} : \cE(G',s') \to \cE(\rC_{{G'}^{\ast}}(s'),1)$ 
does. Given $f$ satisfies (1) of Theorem~\ref{UniqueJord}, then $f$ satisfies (2) of Theorem~\ref{UniqueJord} if and only if $f_{\iota}$ does.  
\end{proposition}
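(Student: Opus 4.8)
The plan is to translate both statements (1) and (2) through the identification $f \leftrightarrow f_\iota$, using the Deligne--Lusztig machinery developed in Section~\ref{sec:DLandisotypies} to compare the quantities appearing on the two sides. For statement (1), the key observation is that property (1) of Theorem~\ref{UniqueJord} is phrased entirely in terms of inner products $\langle \chi, R_{\bT}^{\bG}(\wh s)\rangle$ and $\langle f(\chi), R_{\bT^*}^{\rC^\circ_{\bG^*}(s)}(\triv)\rangle$. First I would fix a witness to the duality between $(\bT,F)$ and $(\bT^*,F^*)$ with $\bT^* \leqslant \rC^\circ_{\bG^*}(s)$, and set $\bT' = \iota(\bT)$ (or the appropriate image torus), $\bT'^* = \iota^{*-1}(\bT^*)$ suitably chosen, so that these are dual $F'$- and $F'^*$-stable maximal tori with $\bT'^* \leqslant \rC^\circ_{\bG'^*}(s')$ by Lemma~\ref{lem:Jaylemma}. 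The relation $\wh{s'}\big|_{\bT^F}$ versus $\wh{s}$ must be tracked via \eqref{eq:dual-isotypies}; one checks that $\tw{\top}\iota(\wh{s'}) = \wh{s}$ up to the identifications. Then Corollary~\ref{cor:R_T^G} (applied to the torus case, which is explicitly noted after the corollary, and to its analogue on the centralizer side, using that $\tw{\top}\iota^*$ restricts to an isotypy $\rC^\circ_{\bG^*}(s) \to \rC^\circ_{\bG'^*}(s')$ as recorded in the proof of Lemma~\ref{lem:transposephi}) gives
\begin{equation*}
\tw{\top}\iota\bigl(R_{\bT'}^{\bG'}(\wh{s'})\bigr) = R_{\bT}^{\bG}(\wh{s}),
\qquad
\tw{\top}\iota^*\bigl(R_{\bT^*}^{\rC^\circ_{\bG^*}(s)}(\triv)\bigr) = R_{\bT'^*}^{\rC^\circ_{\bG'^*}(s')}(\triv),
\end{equation*}
where the first requires $\ker(\iota)$ connected or $\rZ(\bT) = \bT$ — which holds since $\bT$ is a torus — and similarly on the dual side (here $\ker(\tw{\top}\iota^*)$ is a torus). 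Since $\tw{\top}\iota$ and $\tw{\top}\iota^*$ are the bijections of Lemma~\ref{lem:transposephi} on the relevant series, taking inner products and using that $\tw{\top}\iota$ is an isometry onto its image (restriction composed with inflation through a central-torus kernel preserves inner products of characters lying in the respective series, by multiplicity-one, Theorem~\ref{thm:multfree}) converts the identity defining property~(1) for $f$ into the identity defining property~(1) for $f_\iota$, noting that the signs match because $\varepsilon_{\bG} = \varepsilon_{\bG'}$ and $\varepsilon_{\rC^\circ_{\bG^*}(s)} = \varepsilon_{\rC^\circ_{\bG'^*}(s')}$ (an isotypy does not change the relative root system, only central tori). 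This argument is manifestly symmetric, giving the ``if and only if''.

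For statement (2), we are in the case $s = s' = 1$, so $\iota : \bG \to \bG'$ restricts to an isotypy $\rC_{G^*}(1) = G^* $-side $\bG^* \to \bG'^*$ — more precisely $\iota^*$ itself plays the role. Both conditions (2a) and (2b) are about unipotent characters. For (2a): the smallest $d$ with $F^d$ split equals the smallest $d$ with $F'^d$ split, since an isotypy induces an isomorphism of the associated ``complete root data'' up to central tori and in particular does not change the order of the diagram automorphism; and the eigenvalue of $F^d$ attached to $\chi$ equals that attached to any constituent of $\tw{\top}\iota(\chi')$ — this is because Deligne--Lusztig induction/restriction is compatible with the Frobenius eigenvalues and $\tw{\top}\iota$ is, by Corollary~\ref{cor:R_T^G} applied to tori, compatible with the virtual characters $R_{\bT}^{\bG}(\triv)$, hence the eigenvalue of $F^d$ on a unipotent character is preserved under $\tw{\top}\iota$ (eigenvalues of Frobenius are spectral invariants read off from the cohomology of Deligne--Lusztig varieties, unchanged by the central quotient). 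The dual statement holds verbatim for $\tw{\top}\iota^*$ on the $\bG'^*$ side. Combining, the eigenvalue relation ``equal up to a power of $q^{d/2}$'' for $(f(\chi), \chi)$ transfers to $(f_\iota(\chi'), \chi')$, and conversely. For (2b): ``principal series'' is preserved by $\tw{\top}\iota$ because $\iota$ carries an $F$-stable Borel to one in $\bG'$ and Harish-Chandra induction commutes with $\tw{\top}\iota$ by Corollary~\ref{cor:R_T^G}; moreover by the remark following the statement of Theorem~\ref{UniqueJord}, the Hecke algebra $\mathrm{End}_{\CC G}(\ind_B^G(\triv))$ is, via Lusztig's isomorphism \cite{Lu81}, naturally identified for $G$ and $G'$ (same Weyl group, same parameters — central tori do not affect the Hecke algebra), so the ``same character of the Hecke algebra'' condition for $(f(\chi),\chi)$ and for $(f_\iota(\chi'),\chi')$ literally coincide once one checks that $\tw{\top}\iota$ respects the bijection between irreducible constituents of $\ind_B^G(\triv)$ and Hecke algebra characters.

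The main obstacle I anticipate is the careful bookkeeping of \emph{witnesses to duality} and the resulting identifications $\wh{s}$, $R_{\bT}^{\bG}(\wh s)$ under $\iota$: one must verify that a witness $(\bT,\bT^*,\delta)$ of the form $(g,g^*)\cdot\scT_0$ with $\bT^* \leqslant \rC^\circ_{\bG^*}(s)$ can be chosen compatibly with a witness $(\bT',\bT'^*,\delta')$ of the form $(\iota(g)\cdot h, \iota^*(\text{something}))\cdot\scT_0'$, and that under this choice $\tw{\top}\iota(\wh{s'}) = \wh{s}$ — this is exactly the kind of compatibility that \eqref{eq:dual-isotypies} is designed to guarantee, but pinning down the indices requires care, and is essentially the content of (the proof of) Proposition~\ref{prop:isotypyseries}, which I would invoke. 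A secondary subtlety is confirming that $\tw{\top}\iota$ really is an isometry between the relevant Lusztig series (not merely injective): this follows from Lemma~\ref{lem:transposephi} together with multiplicity-freeness (Theorem~\ref{thm:multfree}), since a multiplicity-free restriction composed with inflation preserves $\langle -,-\rangle$ on the subspace it identifies. Once these identifications are in place, everything else is a direct, symmetric transfer of the defining equalities.
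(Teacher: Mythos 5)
Your proposal is correct and takes essentially the same route as the paper: for property (1) you use Corollary~\ref{cor:R_T^G} to transport the Deligne--Lusztig virtual characters and the isometry of $\tw{\top}\iota$ on the relevant series (via Lemma~\ref{lem:transposephi} and Theorem~\ref{thm:multfree}) to transport the inner products; for (2a) you use that the bijections of unipotent characters coming from isotypies preserve Frobenius eigenvalues (the paper cites \cite[(1.18)]{Lu76} and \cite[Prop.~8.1.13]{dmbook2} for this, which you should too, as the claim is not quite immediate); and for (2b) you use the identification of the Hecke algebras through $\tw{\top}\iota$, $\tw{\top}\iota^*$, just as the paper does. The only small extra care you take that the paper leaves implicit is noting $\varepsilon_{\bG} = \varepsilon_{\bG'}$ and $\varepsilon_{\rC^\circ_{\bG^*}(s)} = \varepsilon_{\rC^\circ_{\bG'^*}(s')}$, which is a fine observation worth having on record.
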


\begin{proof}
Throughout we assume that $\chi' \in \cE(G', s')$ and $\chi \in \cE(G,s)$ are
characters such that $\chi = \tw{\top}\iota(\chi')$. Then $f_\iota(\chi')=\tw{\top}\iota^\ast( f(\chi))$. We consider (1) and (2)
separately.

(1). The map $\bT \mapsto \bT' := \iota(\bT)\cdot \rZ^{\circ}(\bG')$ is a
bijection, with inverse $\bT' \mapsto \iota^{-1}(\bT')$, between the $F$-stable
maximal tori of $\bG$ and those of $\bG'$. If $\theta \in \irr(T)$ and $\theta'
\in \irr(T')$ satisfy $\theta = {}^{\top}\iota(\theta')$ then $\langle
\chi,R_{\bT}^{\bG}(\theta)\rangle_G = \langle
\chi',R_{\bT'}^{\bG'}(\theta')\rangle_{G'}$ by Corollary~\ref{cor:R_T^G}, and similarly $
\langle f(\chi), \varepsilon_{\bG}\varepsilon_{\rC^{\circ}_{\bG^*}(s)}R_{\bT^\ast}^{\rC_{\bG^\ast}(s)}
(\triv) \rangle_{\rC_{G^\ast}^{\circ}(s)} = \langle f_{ \iota}(\chi'),
\varepsilon_{\bG'}\varepsilon_{\rC^{\circ}_{{\bG'}^*}(s)}R_{{\bT'}^\ast}^{\rC_{{\bG'}^\ast}(s')}
(\triv) \rangle_{\rC_{{G'}^\ast}^{\circ}(s')}$. The result follows. 

(2). We assume $s' = 1$ and $s=1$. Let $\psi = f(\chi) \in \cE(G^*, 1)$, and
$\psi' = \tw{\top}\iota^\ast(\psi) = f_\iota(\chi')$.  Consider Condition (2a).
The isotypies $\iota$ and $\iota^*$  naturally yield bijections of unipotent characters.  By
arguments in \cite[(1.18)]{Lu76}, together with \cite[Prop.~8.1.13]{dmbook2},
these bijections of unipotent characters from isotypies preserve the
corresponding eigenvalues of the Frobenius, so that the eigenvalues
corresponding to $\chi$ and $\chi'$ are equal, and the eigenvalues
corresponding to $\psi$ and $\psi'$ are equal.  The claim for Condition (2a)
follows.

For Property (2b), we now assume $\chi$ is in the principal series, 
from which it follows that so are $\chi'$, $\psi$, and $\psi'$, if we assume $f$, and thus also $f_{\iota}$, satisfy condition (1).

Let
${\mathbf B}$ be an $F$-stable Borel subgroup of $\bG$, and $B = {\mathbf B}^F$.
The Hecke algebra for $G$ 
may be
described as $e\CC Ge$, with $e = \frac{1}{|B|} \sum_{b \in B} b$, and the
bijection from characters in the principal series $\ind_B^G(\triv)$ to characters of
$e \CC G e$ is given by extending $\chi$ from $G$ to $\CC G$ linearly, and
restricting to the subalgebra $e \CC Ge$.  The case for $G'$ is analogous, and thus the natural bijection between
the characters of Hecke algebras corresponding to $G$ and $G'$ is via composition with $\tw\top\iota$. Similarly,  the natural bijection between the characters of Hecke
algebras corresponding to $G^\ast$ and ${G'}^\ast$ is through composition with
$\tw\top\iota^*$, and then extending linearly. 
That is, $\chi$ and $\chi'$
correspond to the same character of the identified Hecke algebras through this
bijection, as do $\psi$ and $\psi'$.  
These identifications commute with the
canonical bijection between the Hecke algebras corresponding to $G$ and
$G^\ast$, which depends only on the underlying Weyl groups, the duality between
Weyl groups $W$ and $W^\ast$, and the canonical map of Lusztig between the Weyl
group and the Hecke algebra, see \cite{Lu81}.  The claim follows.
\end{proof}

\begin{proposition} \label{Jordiff3}
Assume $z' \in \rZ(G'^{\ast})$ is a central element and $z = \iota^{\ast}(z')
\in \rZ(G^{\ast})$. Then for any two bijections $f_s : \cE(G,s) \to
\cE(\rC_{G^{\ast}}(s),1)$ and $f_{sz} : \cE(G,sz) \to \cE(\rC_{G^{\ast}}(s),1)$
the following are equivalent:
\begin{enumerate}
    \item $f_{sz}(\chi \otimes \hat{z}) = f_s(\chi)$ for all $\chi \in \cE(G,s)$
    \item $(f_{sz})_{\iota}(\chi' \otimes \hat{z'}) = (f_s)_{\iota}(\chi')$ for
    all $\chi' \in \cE(G',s')$.
\end{enumerate}
In particular, $f_s$ and $f_{sz}$ satisfy (3) of Theorem \ref{UniqueJord} if and only if $(f_s)_\iota$ and $(f_{sz})_\iota$ do.
\end{proposition}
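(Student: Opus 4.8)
The plan is to reduce the equivalence to a direct unwinding of the definition of $f \mapsto f_\iota$ together with the compatibility of the twisting operation $\chi \mapsto \chi \otimes \hat z$ with inflation along $\iota$. Concretely, recall that for a bijection $f$ we have $f_\iota = \tw{\top}\iota^\ast \circ f \circ \tw{\top}\iota$, so condition (2) reads $\tw{\top}\iota^\ast(f_{sz}(\tw{\top}\iota(\chi') \otimes \hat{z'})) = \tw{\top}\iota^\ast(f_s(\tw{\top}\iota(\chi')))$ for all $\chi' \in \cE(G',s')$. First I would establish the key identity
\begin{equation*}
\tw{\top}\iota(\chi' \otimes \hat{z'}) = \tw{\top}\iota(\chi') \otimes \hat{z},
\end{equation*}
i.e.\ that inflation along $\iota$ intertwines the twist by the linear character $\hat{z'}$ of $\wt{G'}$... more precisely of $G'$, with the twist by $\hat z$ of $G$. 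This should follow from the fact that $\hat z = \tw{\top}\iota(\hat{z'})$ as characters of $G$, which in turn is a consequence of the compatibility of the isomorphisms $s \mapsto \hat s$ with duality of tori (from the discussion in Section~\ref{sec:DMmap}) applied to the central elements $z' = \iota^\ast{}^{-1}$... rather, $z = \iota^\ast(z')$, and the fact that $\iota^\ast$ is dual to $\iota$ in the sense of Definition~\ref{def:dual-isotypy}. I would cite \cite[Prop.~11.4.12]{dmbook2} (or the analogue for isotypies) for the behaviour of linear characters attached to central elements under the dual map, noting that $\tw{\top}\iota$ on $\cE(G',z')$ sends $\hat{z'}$ to $\hat z$ and more generally sends $\cE(G',z') = \cE(G',1)\otimes\hat{z'}$ to $\cE(G,z) \cap \mathrm{im}(\tw{\top}\iota)$.

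With that identity in hand, the equivalence is almost formal. Assuming (1), for any $\chi' \in \cE(G',s')$ set $\chi = \tw{\top}\iota(\chi') \in \cE(G,s)$; then
\begin{equation*}
(f_{sz})_\iota(\chi' \otimes \hat{z'}) = \tw{\top}\iota^\ast\big(f_{sz}(\tw{\top}\iota(\chi' \otimes \hat{z'}))\big) = \tw{\top}\iota^\ast\big(f_{sz}(\chi \otimes \hat z)\big) = \tw{\top}\iota^\ast\big(f_s(\chi)\big) = (f_s)_\iota(\chi'),
\end{equation*}
where the second equality uses the key identity, the third uses (1), and the outer ones use the definition of $(-)_\iota$ together with the fact (Lemma~\ref{lem:transposephi}) that $\tw{\top}\iota$ is a bijection $\cE(G',s') \to \cE(G,s)$ and $\tw{\top}\iota^\ast$ is a bijection $\cE(\rC_{G^\ast}(s),1) \to \cE(\rC_{G'^\ast}(s'),1)$ (the latter also appearing in the proof of Lemma~\ref{lem:transposephi}). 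For the converse I would run the same computation backwards: since $\tw{\top}\iota$ is surjective onto $\cE(G,s)$, every $\chi \in \cE(G,s)$ is of the form $\tw{\top}\iota(\chi')$, and since $\tw{\top}\iota^\ast$ is injective on $\cE(\rC_{G^\ast}(s),1)$, the equality $(f_{sz})_\iota(\chi'\otimes\hat{z'}) = (f_s)_\iota(\chi')$ forces $f_{sz}(\chi\otimes\hat z) = f_s(\chi)$. One small bookkeeping point: one must check that $\chi \otimes \hat z$ indeed lies in $\cE(G,sz)$ and corresponds under $\tw{\top}\iota$ to $\chi' \otimes \hat{z'} \in \cE(G',s'z')$, so that both sides of the asserted identities are typed correctly; this is exactly the content of $\cE(G,sz) = \cE(G,s)\otimes\hat z$ recalled in Section~\ref{sec:DMmap}, combined with Proposition~\ref{prop:isotypyseries}.

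The "in particular" clause is then immediate: condition (3) of Theorem~\ref{UniqueJord}, applied to the pair $(\bG,F)$ with the central element $z$, is precisely statement (1) here with $f_s = J_s^{\bG}$ and $f_{sz} = J_{sz}^{\bG}$; applied to $(\bG',F')$ with $z'$ it is statement (2) with $f_s = J_s^{\bG'}$, $f_{sz} = J_{sz}^{\bG'}$ and the identification $(J_s^{\bG})_\iota \leftrightarrow J_s^{\bG'}$, etc. I expect the only genuine obstacle to be the verification of the key identity $\tw{\top}\iota(\chi' \otimes \hat{z'}) = \tw{\top}\iota(\chi')\otimes\hat z$ — that is, pinning down that the linear character $\hat{z'}$ pulls back along $\iota$ to $\hat z$. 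This hinges on tracing through the definition of $s\mapsto\hat s$ via the fixed embedding $\FF^\times\hookrightarrow\CC^\times$ and isomorphism $\FF^\times\to(\QQ/\ZZ)_{p'}$, the chosen witnesses to the dualities, and the dual-isotypy condition \eqref{eq:dual-isotypies}, which together guarantee that the $\delta$'s and $\delta'$'s are compatible with $\iota$ and $\iota^\ast$; everything else is the formal bijection-chasing sketched above.
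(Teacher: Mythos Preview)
Your proposal is correct and follows essentially the same approach as the paper: the crux is the identity $\hat z = \tw{\top}\iota(\hat{z'})$, from which $\tw{\top}\iota(\chi'\otimes\hat{z'}) = \tw{\top}\iota(\chi')\otimes\hat z$ is immediate, and the rest is the formal bijection-chasing you outline. The paper's proof is much terser---it only records that $\rC_{G^\ast}(sz)=\rC_{G^\ast}(s)\leqslant\rC_{\bG^\ast}^\circ(s)=\rC_{\bG^\ast}^\circ(sz)$ (so that $\tw{\top}\iota$ is a bijection on the $sz$-series as well) and then invokes $\hat z=\tw{\top}\iota(\hat{z'})$ directly from Proposition~\ref{prop:isotypyseries}, leaving the unwinding implicit---but the content is the same.
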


\begin{proof}
Note that
\begin{equation*}
\rC_{G^{\ast}}(sz) = \rC_{G^{\ast}}(s)
\leqslant 
\rC_{\bG^{\ast}}^{\circ}(s) = \rC_{\bG^{\ast}}^{\circ}(sz)
\end{equation*}
because $\rZ(G^{\ast}) \leqslant \rZ(\bG^{\ast})$, so ${}^{\top}\iota$ gives a
bijection $\cE(G',s'z') \to \cE(G,sz)$. Then we  use that  $\hat{z} =
\tw{\top}\iota(\hat{z'})$, by Proposition~\ref{prop:isotypyseries}.
\end{proof}

We now consider (4)--(5) of Theorem~\ref{UniqueJord}. The map $\bL \mapsto \bL'
= \iota(\bL)\cdot\rZ^{\circ}(\bG')$ gives a bijection between the Levi subgroups
of $\bG$ and those of $\bG'$ with inverse $\bL' \mapsto \iota^{-1}(\bL)$. We
assume $\bL$ is $F$-stable, which means $\bL'$ is $F'$-stable. Let $\bL^{\ast}
\leqslant \bG^{\ast}$ be an $F^{\ast}$-stable Levi subgroup dual to $\bL$ and
let $\bL'^{\ast} = (\iota^{\ast})^{-1}(\bL^{\ast})$.

If $\rC_{G^{\ast}}(s) \leqslant \rC_{\bG^\ast}^\circ(s) \leqslant \bL^\ast$ then
$\rC_{L^{\ast}}(s) \leqslant \bL^{\ast} \cap \rC_{\bG^\ast}^\circ(s) =
\rC_{\bL^\ast}^\circ(s)$, where the second equality follows from
\cite[1.4]{LS85}. Moreover, if $\rC_{\bG^\ast}^\circ(s) \leqslant \bL^\ast$ then
$\rC_{\bG'^\ast}^\circ(s') \leqslant \bL'^\ast$ by \cite[Prop.
2.3]{bonnafequasiisol}. As above the map
\begin{equation*}
f^{\bL} \mapsto f_{\iota}^{\bL} 
:= 
(f^{\bL})_{\iota}
=
\tw{\top}\iota^{\ast} \circ f^{\bL} \circ \tw{\top}\iota
\end{equation*}
identifies the set of bijections $\cE(L,s) \to \cE(\rC_{L^{\ast}}(s),1)$ with
the corresponding set $\cE(L',s') \to \cE(\rC_{L'^{\ast}}(s'),1)$.

\begin{proposition}\label{Jordiff45}
A pair of bijections $f^{\bG} : \cE(G,s) \to \cE(\rC_{G^{\ast}}(s),1)$ and
$f^{\bL} : \cE(L,s) \to \cE(\rC_{L^{\ast}}(s),1)$ satisfy (4), resp. (5), of
Theorem~\ref{UniqueJord} if and only if $f_{\iota}^{\bG}$ and $f_{\iota}^{\bL}$
do. 
\end{proposition}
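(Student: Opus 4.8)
The plan is to reduce properties (4) and (5) of Theorem~\ref{UniqueJord} to statements about the compatibility of Deligne--Lusztig induction (and its restriction to cuspidal parts) with the maps $\tw{\top}\iota$, $\tw{\top}\iota^{\ast}$, and their analogues on the centralizers. The key point is that both conditions are expressed via commuting squares built out of Lusztig twisted induction $R_{\bL}^{\bG}$, $R_{\bL'}^{\bG'}$, $R_{\bL^{\ast}}^{\rC_{\bG^{\ast}}(s)}$, $R_{\bL'^{\ast}}^{\rC_{\bG'^{\ast}}(s')}$, and the bijections $J_s^{\bullet}$; since $f_{\iota}^{\bullet}$ is by definition conjugation of $f^{\bullet}$ by the (bijective, by Lemma~\ref{lem:transposephi}) maps $\tw{\top}\iota$ and $\tw{\top}\iota^{\ast}$, the equivalence will follow once we know that these transpose maps intertwine the relevant induction functors on both the group side and the dual/centralizer side.

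First I would handle the group side: the Levi $\bL' = \iota(\bL)\cdot\rZ^{\circ}(\bG')$ is dual to $\bL'^{\ast} = (\iota^{\ast})^{-1}(\bL^{\ast})$, and $\iota$ restricts to an isotypy $(\bL,F)\to(\bL',F')$ whose kernel is $\ker(\iota)\leqslant\rZ(\bG)\leqslant\rZ(\bL)$; hence Corollary~\ref{cor:R_T^G} (applied with $\bG$ replaced by $\bL'$, $\bL$ by a dual maximal torus, or more precisely applied to the isotypy $\bL \to \bL'$ for which the kernel lies in $\Lang(\rZ(\bL))$ since $\ker(\iota)$ is a central torus, hence connected) gives $\tw{\top}\iota\circ R_{\bL'}^{\bG'} = R_{\bL}^{\bG}\circ\tw{\top}\iota$. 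Next I would do the dual side: $\iota^{\ast}$ restricts to an isotypy $\rC_{\bG'^{\ast}}^{\circ}(s') \to \rC_{\bG^{\ast}}^{\circ}(s)$ (this is exactly the content used already in the proof of Lemma~\ref{lem:transposephi}, via \cite[Eq.~(2.2)]{Bon06}), carrying the dual Levi $\bL'^{\ast}\cap\rC_{\bG'^{\ast}}^{\circ}(s')$ onto $\bL^{\ast}\cap\rC_{\bG^{\ast}}^{\circ}(s) = \rC_{\bL^{\ast}}^{\circ}(s)$; its kernel is again a central torus, so Corollary~\ref{cor:R_T^G} yields $\tw{\top}\iota^{\ast}\circ R_{\bL^{\ast}}^{\rC_{\bG^{\ast}}(s)} = R_{\bL'^{\ast}}^{\rC_{\bG'^{\ast}}(s')}\circ\tw{\top}\iota^{\ast}$ after extending linearly over the Lusztig series.

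With those two intertwining identities in hand, condition (4) for the pair $(f^{\bG},f^{\bL})$, namely $f^{\bL} = f^{\bG}\circ R_{\bL}^{\bG}$, translates directly: compose on the right with $\tw{\top}\iota$ and on the left with $\tw{\top}\iota^{\ast}$, slide the induction functors past the transpose maps using the two identities above, and use that $\tw{\top}\iota$ is a bijection $\cE(L',s')\to\cE(L,s)$ (Lemma~\ref{lem:transposephi} applied to the Levi isotypy, noting $\rC_{L^{\ast}}(s)\leqslant\rC_{\bL^{\ast}}^{\circ}(s)$ holds by \cite[1.4]{LS85} as observed just above) to conclude $f_{\iota}^{\bL} = f_{\iota}^{\bG}\circ R_{\bL'}^{\bG'}$. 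For condition (5) the argument is the same, except one must also check that $\tw{\top}\iota$ and $\tw{\top}\iota^{\ast}$ respect the cuspidal parts $\cE(L,s)^{\bullet}$ and $\cE(L^{\ast},1)^{\bullet}$; this is precisely Proposition~\ref{prop:cuspidal-pullback}, which says $\chi$ is cuspidal iff $\chi'$ is for $\chi\in\irr(G\mid\tw{\top}\iota(\chi'))$, applied both to $\iota|_{\bL}$ and to $\iota^{\ast}|_{\rC^{\circ}(s)}$. One also needs that the restricted isotypies stay within the ``$F$-simple of type $E_8$ with $\rC(s)$ of type $E_7.A_1$'' hypotheses of (5), but this is automatic since the types of $\bG$, $\bL$, $\rC_{\bG^{\ast}}(s)$ and the membership $s\in\rZ(\bL^{\ast})$ are all unchanged under the isotypy.

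The main obstacle I anticipate is bookkeeping rather than a genuine mathematical difficulty: one must carefully match up the several duality witnesses (for $(\bG,\bG^{\ast})$, $(\bL,\bL^{\ast})$, $(\bG',\bG'^{\ast})$, $(\bL',\bL'^{\ast})$, and for the centralizers) so that the hypotheses of Corollary~\ref{cor:R_T^G} are literally met with the correct ambient groups, and verify that ``$\ker$ is a central torus, hence connected'' genuinely applies in each invocation (for $\iota$ it holds by the regular-embedding-style hypothesis, for $\iota^{\ast}$ it holds because a dual of an isotypy has central-torus kernel by Lemma~\ref{lem:dual-isotypies} combined with Definition~\ref{def:dual-isotypy}). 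Once the diagram is set up correctly, the equivalences in both directions are a formal consequence of the bijectivity statements in Lemma~\ref{lem:transposephi} together with the two commutation identities, so I would not expect to need any new input beyond Corollaries and Propositions already established in Sections~\ref{sec:DLandisotypies} and~\ref{sec:JD-and-isotypies}.
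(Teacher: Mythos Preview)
Your overall strategy---conjugate the commuting square defining (4) or (5) by the bijections $\tw{\top}\iota$ and $\tw{\top}\iota^{\ast}$---matches the paper's, and your use of Proposition~\ref{prop:cuspidal-pullback} for the cuspidal parts in (5) is exactly right. The gap is in how you obtain the intertwining identity $\tw{\top}\iota\circ R_{\bL'}^{\bG'} = R_{\bL}^{\bG}\circ\tw{\top}\iota$ (and its dual-side analogue). You invoke Corollary~\ref{cor:R_T^G}, which needs either $\ker(\iota)$ connected or $\rZ(\bL)$ connected. But in the setting of Section~\ref{sec:JD-and-isotypies}, $\iota$ is an \emph{arbitrary} isotypy, not a regular embedding: $\ker(\iota)$ need not be a torus, and neither is $\rZ(\bL)$ assumed connected. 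Your justifications ``$\ker(\iota)$ is a central torus'' and ``a dual of an isotypy has central-torus kernel by Lemma~\ref{lem:dual-isotypies}'' are simply not supported by the hypotheses; Lemma~\ref{lem:dual-isotypies} says nothing about connectedness of the kernel.

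The paper closes this gap differently: it applies the full Proposition~\ref{prop:surj-isotyp-DL}, which gives $\tw{\top}\iota\circ R_{\bL'}^{\bG'}$ as an average of terms $R_{\bL}^{\bG}\circ\tw{\top}\Ad_{l_z}\circ\tw{\top}\iota$ over $z\in\bK/\Lang(\bK)$. The extra input is that each $\tw{\top}\Ad_{l_z}$ acts \emph{trivially} on $\cE(L,s)$. This is where the hypothesis $\rC_{G^{\ast}}(s)\leqslant\rC_{\bG^{\ast}}^{\circ}(s)$ (hence $\rC_{L^{\ast}}(s)\leqslant\rC_{\bL^{\ast}}^{\circ}(s)$) is used: by the argument in the proof of Lemma~\ref{lem:transposephi}, every $\chi\in\cE(L,s)$ extends to $L\cdot\rN_{\bL_{\der}}(L)$, so it is fixed by the diagonal automorphisms $\Ad_{l_z}$ with $\Lang(l_z)\in\rZ(\bL)$. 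Once the sum collapses, the rest of your argument goes through. The same reasoning handles the dual-side identity for (5). So the fix is not to assume the kernel is connected, but to show the $\Ad_{l_z}$ terms are harmless on the relevant Lusztig series.
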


\begin{proof}
The proof of Lemma~\ref{lem:transposephi} shows that if $l \in \bL$ and
$\Lang(l) \in \rZ(\bL)$ then for any $\chi \in \cE(L,s)$ we have
$\tw{\top}\Ad_l(\chi) = \chi$, since our conditions on $l$ ensure that
$\tw\top\Ad_l$ is an isotypy $\bL\rightarrow\bL$ that restricts to a map
$L\rightarrow L$. It follows from Proposition~\ref{prop:surj-isotyp-DL} that
\begin{equation*}
f_{\iota}^{\bG}\circ R_{\bL'}^{\bG'}
= 
\tw{\top}\iota^{\ast} \circ f^{\bG} \circ R_{\bL}^{\bG} \circ \tw{\top}\iota
=
(f^{\bG}\circ R_{\bL}^{\bG})_{\iota}
\end{equation*}
as maps $\ZZ\cE(L',s') \to \ZZ\cE(\rC_{G'^{\ast}}(s'),1)$. This proves the
statement for (4).

The same argument shows that
\begin{equation*}
R_{\bL'^{\ast}}^{\rC_{\bG'^{\ast}}(s')} \circ f_{\iota}^{\bL}
= 
(R_{\bL^{\ast}}^{\rC_{\bG^{\ast}}(s)} \circ f^{\bL})_{\iota}
\end{equation*}
as maps $\ZZ\cE(L',s') \to \ZZ\cE(\rC_{G'^{\ast}}(s'),1)$. By
Proposition~\ref{prop:cuspidal-pullback} the bijections $\cE(L',s') \to
\cE(L,s)$ and $\cE(\rC_{L^{\ast}}(s),1) \to \cE(\rC_{L'^{\ast}}(s'),1)$, induced
by $\tw{\top}\iota$ and $\tw{\top}\iota^{\ast}$ respectively, restrict to
bijections between the cuspidal parts of the series. Therefore $f_{\iota}^{\bL}$
restricts to a bijection $\cE(L',s')^{\bullet} \to
\cE(\rC_{L'^{\ast}}(s'),1)^{\bullet}$ between cuspidal characters if and only if
$f^{\bL}$ restricts to a bijection $\cE(L,s)^{\bullet} \to
\cE(\rC_{L^{\ast}}(s),1)^{\bullet}$.  From this the statement for (5) follows.
\end{proof}

We now want to investigate property (6), so we consider the situation where we
also have an isotypy $\iota_1 : (\bG_1,F_1) \to (\bG_1',F_1')$ as well as
isotypies $\varphi : (\bG,F) \to (\bG_1,F_1)$ and $\varphi' : (\bG',F') \to
(\bG_1',F_1')$ so that the following diagram commutes
\begin{equation}\label{eq:comm-diag-isos}
\begin{tikzcd}[sep=1cm]
\bG' \arrow[r,"\varphi'"] & \bG_1'\\
\bG \arrow[r,"\varphi"]\arrow[u,"\iota"] & \bG_1\arrow[u,"\iota_1"']
\end{tikzcd}
\end{equation}
As before we fix isotypies $\iota_1^{\ast}$, $\varphi^{\ast}$, and
$\varphi'^{\ast}$, that are dual to $\iota_1$, $\varphi$, and $\varphi'$,
respectively.

Both $\iota^{\ast}\circ\varphi'^{\ast}$ and $\varphi^{\ast}\circ\iota_1^{\ast}$
are dual to $\varphi'\circ\iota = \iota_1\circ\varphi$ so by
Lemma~\ref{lem:dual-isotypies} there is an element $g \in
\rN_{\bG^{\ast}}(G^{\ast})$ such that
\begin{equation*}
\iota^{\ast}\circ\varphi'^{\ast} 
= 
\Ad_g\circ\varphi^{\ast}\circ\iota_1^{\ast}.
\end{equation*}
Note that if $f : \cE(G,s) \to \cE(\rC_{G^{\ast}}(s),1)$ is a bijection then we
have $\tw{\top}\Ad_g\circ f$ is a bijection $\cE(G,s^g) \to
\cE(\rC_{G^{\ast}}(s^g),1)$ because $s^g$ and $s$ are $G^{\ast}$-conjugate.
Recall that we have picked a semisimple element $s' \in G'^{\ast}$
such that $s = \iota^{\ast}(s')$ satisfies $\rC_{G^{\ast}}(s) \leqslant
\rC_{\bG^{\ast}}^{\circ}(s)$.

\begin{proposition}\label{prop:commute-diag-isotypies}
Assume we have a commutative diagram as in \eqref{eq:comm-diag-isos} and let $g
\in \rN_{\bG^{\ast}}(G^{\ast})$ be an element such that
\begin{equation*}
\iota^{\ast}\circ\varphi'^{\ast} 
= 
\Ad_g\circ\varphi^{\ast}\circ\iota_1^{\ast}.
\end{equation*}
Furthermore, assume  $s_1' \in G_1'^{\ast}$ is a semisimple element such that
$s' = \varphi'^{\ast}(s_1')$ and let $s_1 = \iota_1^{\ast}(s_1')$. Then for any
two bijections $f_s^{\bG} : \cE(G,s) \to \cE(\rC_{G^{\star}}(s),1)$ and
$f_{s_1}^{\bG_1} : \cE(G_1,s_1) \to \cE(\rC_{G_1^{\star}}(s_1),1)$ the following
are equivalent:
\begin{enumerate}
    \item $f_{s_1}^{\bG_1} = \tw{\top}\varphi^{\ast} \circ ({}^{\top}\Ad_g \circ
    f_s^{\bG}) \circ \tw{\top}\varphi$
    \item $(f_{s_1}^{\bG_1})_{\iota_1} = \tw{\top}\varphi'^{\ast} \circ
    (f_s^{\bG})_{\iota} \circ \tw{\top}\varphi'$
\end{enumerate}
\end{proposition}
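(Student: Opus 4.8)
The plan is to unwind both conditions into statements about maps between Lusztig series attached to $\bG$, $\bG'$, $\bG_1$, $\bG_1'$ and to use the compatibility of the $(-)_{\iota}$-type operations with composition. Everything is built from the bijections $\tw{\top}\iota$, $\tw{\top}\varphi$, $\tw{\top}\iota_1$, $\tw{\top}\varphi'$ on the character side and their dual counterparts, so the heart of the proof is a diagram chase showing that pre- and post-composing a bijection $f_s^{\bG}$ with the relevant transposes commutes appropriately. The key input is that $\tw{\top}(\varphi'\circ\iota) = \tw{\top}\iota\circ\tw{\top}\varphi'$ (contravariance of $\top$), and likewise on the dual side $\tw{\top}(\iota^{\ast}\circ\varphi'^{\ast}) = \tw{\top}\varphi'^{\ast}\circ\tw{\top}\iota^{\ast}$, together with the given relation $\iota^{\ast}\circ\varphi'^{\ast} = \Ad_g\circ\varphi^{\ast}\circ\iota_1^{\ast}$, which on transposes reads $\tw{\top}\varphi'^{\ast}\circ\tw{\top}\iota^{\ast} = \tw{\top}\iota_1^{\ast}\circ\tw{\top}\varphi^{\ast}\circ\tw{\top}\Ad_g$.

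First I would record the definitions. By the definition of $(-)_{\iota}$ we have $(f_s^{\bG})_{\iota} = \tw{\top}\iota^{\ast}\circ f_s^{\bG}\circ\tw{\top}\iota$ as a bijection $\cE(G',s')\to\cE(\rC_{G'^{\ast}}(s'),1)$, and similarly $(f_{s_1}^{\bG_1})_{\iota_1} = \tw{\top}\iota_1^{\ast}\circ f_{s_1}^{\bG_1}\circ\tw{\top}\iota_1$. All of these are honest bijections by Lemma~\ref{lem:transposephi} (applied to $\iota$, $\iota_1$, $\varphi$, $\varphi'$ in turn, using Lemma~\ref{lem:Jaylemma} to propagate the connected-centralizer hypothesis to $s'$, $s_1$, $s_1'$). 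I would then substitute condition (1), namely $f_{s_1}^{\bG_1} = \tw{\top}\varphi^{\ast}\circ(\tw{\top}\Ad_g\circ f_s^{\bG})\circ\tw{\top}\varphi$, into the left-hand side of (2):
\begin{equation*}
(f_{s_1}^{\bG_1})_{\iota_1}
= \tw{\top}\iota_1^{\ast}\circ\tw{\top}\varphi^{\ast}\circ\tw{\top}\Ad_g\circ f_s^{\bG}\circ\tw{\top}\varphi\circ\tw{\top}\iota_1.
\end{equation*}
Now on the character side $\tw{\top}\varphi\circ\tw{\top}\iota_1 = \tw{\top}(\iota_1\circ\varphi) = \tw{\top}(\varphi'\circ\iota) = \tw{\top}\iota\circ\tw{\top}\varphi'$, and on the dual side $\tw{\top}\iota_1^{\ast}\circ\tw{\top}\varphi^{\ast}\circ\tw{\top}\Ad_g = \tw{\top}(\Ad_g\circ\varphi^{\ast}\circ\iota_1^{\ast}) = \tw{\top}(\iota^{\ast}\circ\varphi'^{\ast}) = \tw{\top}\varphi'^{\ast}\circ\tw{\top}\iota^{\ast}$. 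Substituting gives
\begin{equation*}
(f_{s_1}^{\bG_1})_{\iota_1}
= \tw{\top}\varphi'^{\ast}\circ\tw{\top}\iota^{\ast}\circ f_s^{\bG}\circ\tw{\top}\iota\circ\tw{\top}\varphi'
= \tw{\top}\varphi'^{\ast}\circ(f_s^{\bG})_{\iota}\circ\tw{\top}\varphi',
\end{equation*}
which is exactly condition (2). Since every map involved is a bijection, the reverse implication follows by reading the same chain of equalities backwards (or by applying the forward direction to the inverse isotypies/$\top$-maps, noting that $(f_{s_1}^{\bG_1})_{\iota_1}$ determines $f_{s_1}^{\bG_1}$).

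The main obstacle is bookkeeping rather than mathematics: one must be careful that all the transposed maps genuinely restrict to bijections between the correct Lusztig series — e.g.\ that $\tw{\top}\varphi'$ sends $\cE(G_1',s_1')$ to $\cE(G',s')$, that $\tw{\top}\Ad_g$ sends $\cE(\rC_{G^{\ast}}(s),1)$ to $\cE(\rC_{G^{\ast}}(s^g),1)$ and that $s^g$ is the relevant semisimple element with $\varphi^{\ast}(s_1)= s^g$, and that the connected-centralizer hypothesis transports along all four isotypies. These are all supplied by Lemma~\ref{lem:transposephi}, Lemma~\ref{lem:Jaylemma}, Proposition~\ref{prop:isotypyseries}, and the remark following that proposition (which tells us $\cE(G^{\ast},\varphi^{\ast}(s'))$ is insensitive to the choice of dual isotypy, so that replacing $\iota^{\ast}\circ\varphi'^{\ast}$ by $\Ad_g\circ\varphi^{\ast}\circ\iota_1^{\ast}$ is legitimate). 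Once those are in place, the argument is the two-line diagram chase above.
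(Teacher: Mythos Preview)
Your proof is correct and is essentially the same diagram chase as the paper's: both arguments expand the definitions of $(f_s^{\bG})_{\iota}$ and $(f_{s_1}^{\bG_1})_{\iota_1}$, use the commutativity $\iota_1\circ\varphi = \varphi'\circ\iota$ on the character side and the relation $\iota^{\ast}\circ\varphi'^{\ast} = \Ad_g\circ\varphi^{\ast}\circ\iota_1^{\ast}$ on the dual side via contravariance of $\tw{\top}(-)$, and then conclude by invertibility of the $\tw{\top}\iota_1$, $\tw{\top}\iota_1^{\ast}$ bijections. The only cosmetic difference is that the paper unfolds (2) to reach the $\iota_1$-wrapped form of (1), whereas you start from (1) and wrap with $\iota_1$ to reach (2); the computations are identical.
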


\begin{proof}
For the convenience of the reader we record the maps under consideration in the
following diagram. We note, however, that not all squares of this diagram
commute.
\begin{center}
\begin{tikzcd}[row sep = .75 cm, column sep = .5cm, math mode=true]
& 
\cE(G',s')
\arrow[rr,"(f_s^{\bG})_{\iota}"]
\arrow[dd,"\tw\top\iota",near start] 
& 
&
\cE(\rC_{G'^{\ast}}(s'),1) 
\arrow[dl,"{}^{\top}\varphi'^{\ast}"]\\
\cE(G_1',s_1') 
\arrow[rr, "(f_{s_1}^{\bG_1})_{\iota_1}"', near start, crossing over]
\arrow[ur,"{}^{\top}\varphi'"]
\arrow[dd,"\tw{\top}\iota_1"'] 
& 
& 
\cE(\rC_{G_1'^{\ast}}(s_1'),1)\\
& 
\cE(G,s)
\arrow[rr,"f_s^{\bG}",near start] 
& 
& 
\cE(\rC_{G^{\ast}}(s),1) 
\arrow[dl,"{}^{\top}\varphi^{\ast}"]
\arrow[uu,"{}^{\top}\iota^{\ast}"]\\
\cE(G_1,s_1) 
\arrow[ur,"{}^{\top}\varphi"']
\arrow [rr,"f_{s_1}^{\bG_1}"'] 
& 
& 
\cE(\rC_{G_1^*}(s_1),1)
\arrow[uu,"{}^{\top}\iota_1^{\ast}",near start,crossing over]
\end{tikzcd}
\end{center}
The front, back, and left squares of this diagram commute and the right square
commutes up to composing with $\tw{\top}\Ad_g$. A direct computation now shows
that
\begin{align*}
\tw{\top}\varphi'^{\ast} \circ (f_s^{\bG})_{\iota} \circ \tw{\top}\varphi'
&=
\tw{\top}\varphi'^{\ast} \circ \tw{\top}\iota^{\ast} \circ f_s^{\bG} \circ
\tw{\top}\iota \circ \tw{\top}\varphi'\\
&=
\tw{\top}\iota_1^{\ast} \circ \tw{\top}\varphi^{\ast} 
\circ ({}^{\top}\Ad_g \circ f_s^{\bG}) \circ
\tw{\top}\varphi \circ \tw{\top}\iota_1.
\end{align*}
Hence, up to replacing $f_s^{\bG}$ by $\tw{\top}\Ad_g \circ f_s^{\bG}$ we can
identify the top and bottom squares and thus the statement follows.
\end{proof}

\section{Existence and Unicity of the Jordan Decomposition}\label{sec:fixedregemb}
Our Jordan decomposition will be built in terms of the unique Jordan decomposition
defined by Digne--Michel in \cite[Thm.~7.1]{DiMi90}. To make effective use of
this in the more general setting of groups with a disconnected center, we will
need to strengthen (vi) of \cite[Thm.~7.1]{DiMi90} to include all isotypies. 
First, we record the following technical statement on dual groups and isotypies,
which is similar to the topics discussed in the proof of
\cite[Prop.~11.4.8]{dmbook2}.

\begin{lemma}\label{lem:dual-of-radical}
If $(\bG^{\ast},F^{\ast})$ is dual to $(\bG,F)$, then
$(\bG^{\ast}/(\bG^{\ast})_{\der},F^{\ast})$ is dual to $(\rZ^{\circ}(\bG),F)$.
Moreover, the isotypy $\pi^{\ast} : \bG^{\ast} \to \bG^{\ast} \times
\bG^{\ast}/(\bG^{\ast})_{\der}$ defined by $\pi^{\ast}(g) =
(g,g(\bG^{\ast})_{\der})$ is dual to the product $\pi : \bG \times
\rZ^{\circ}(\bG) \to \bG$ defined by $\pi((g,z)) = gz$.
\end{lemma}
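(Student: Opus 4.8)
The plan is to verify the two duality claims directly from the root-datum description of duality, using the standard fact that the root datum of $\bG^{\ast}/(\bG^{\ast})_{\der}$ is the ``toral part'' of the root datum of $\bG^{\ast}$, which is dual to the toral part of the root datum of $\bG$, namely the root datum of $\rZ^{\circ}(\bG)$. Concretely, fix the witness $\scT_0 = (\bT_0,\bT_0^{\ast},\delta_0)$ to the duality of $(\bG,F)$ and $(\bG^{\ast},F^{\ast})$. The quotient map $q^{\ast} : \bG^{\ast} \to \bG^{\ast}/(\bG^{\ast})_{\der}$ sends $\bT_0^{\ast}$ onto a maximal torus $\bar\bT_0^{\ast}$ of the quotient, and $X(\bar\bT_0^{\ast})$ is identified with the subgroup of $X(\bT_0^{\ast})$ vanishing on all coroots; dually $\wc X(\bar\bT_0^{\ast}) = \wc X(\bT_0^{\ast})/\langle\wc\Phi_{\bG^{\ast}}(\bT_0^{\ast})\rangle$. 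On the other side, the inclusion $\rZ^{\circ}(\bG) \hookrightarrow \bG$ identifies $\rZ^{\circ}(\bG)$ with a maximal torus of itself contained in $\bT_0$, with $X(\rZ^{\circ}(\bG)) = X(\bT_0)/\langle\Phi_{\bG}(\bT_0)\rangle$ and $\wc X(\rZ^{\circ}(\bG))$ the subgroup of $\wc X(\bT_0)$ orthogonal to all roots. First I would check that $\delta_0$, which carries $X(\bT_0) \to \wc X(\bT_0^{\ast})$ and matches roots to coroots and coroots to roots, therefore induces an isomorphism $\bar\delta_0 : X(\rZ^{\circ}(\bG)) = X(\bT_0)/\langle\Phi_{\bG}\rangle \to \wc X(\bar\bT_0^{\ast})$ on the quotients, and that $\bar\delta_0$ again satisfies the Frobenius-compatibility \eqref{eq:commute-frob} since $\delta_0$ does and both $F$ and $F^{\ast}$ preserve the relevant sublattices; this gives the first sentence of the lemma, the witness being $(\rZ^{\circ}(\bG),\bar\bT_0^{\ast},\bar\delta_0)$ (noting the empty root system makes $\bar\delta_0$ vacuously an isomorphism of root data).

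For the second assertion I would first note that $\pi : \bG \times \rZ^{\circ}(\bG) \to \bG$, $(g,z)\mapsto gz$, is indeed an isotypy: it is surjective because $\bG = \bG_{\der}\cdot\rZ^{\circ}(\bG)$, and its kernel $\{(z^{-1},z) : z \in \rZ^{\circ}(\bG)\}$ is a central subgroup (in fact a central torus), so the derived-subgroup and central-kernel conditions hold; likewise $\pi^{\ast}(g) = (g, g(\bG^{\ast})_{\der})$ is an isotypy $\bG^{\ast} \to \bG^{\ast}\times \bG^{\ast}/(\bG^{\ast})_{\der}$, being injective with central cokernel on the torus, and it intertwines $F^{\ast}$ with $F^{\ast}\times F^{\ast}$. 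I would fix the witness on the product side to be $\scT_0 \times (\rZ^{\circ}(\bG),\bar\bT_0^{\ast},\bar\delta_0)$ for the pair $(\bG\times\rZ^{\circ}(\bG),\, \bG^{\ast}\times\bG^{\ast}/(\bG^{\ast})_{\der})$ (as in the discussion of witnesses in products at the end of Section~\ref{sec:DMmap}). Then to apply Definition~\ref{def:dual-isotypy} I must exhibit $(g,g^{\ast})$ with $\pi({}^{g}(\bT_0\times\bT_0^{\ast, \vee}\text{-side})) \subseteq \bT_0$ etc.; here everything already respects the base tori, so I can take $g = 1$ and $g^{\ast} = 1$, and the condition \eqref{eq:dual-isotypies} reduces to checking that on cocharacter lattices $\wc X(\pi^{\ast})$ and on character lattices $X(\pi)$ are matched through $\delta_0$ and $\delta_0\times\bar\delta_0$. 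This is an unwinding: $X(\pi) : X(\bT_0) \to X(\bT_0)\oplus X(\rZ^{\circ}(\bG))$ is $\chi \mapsto (\chi, \bar\chi)$ where $\bar\chi$ is the image of $\chi$ modulo $\langle\Phi_{\bG}\rangle$, and one checks $\delta_0$ transports this precisely to $\wc X(\pi^{\ast}) : \wc X(\bT_0^{\ast})\oplus \wc X(\bar\bT_0^{\ast}) \to \wc X(\bT_0^{\ast})$ — wait, directions: $\pi^{\ast}$ goes $\bG^{\ast}\to\bG^{\ast}\times(\cdots)$, so $\wc X(\pi^{\ast})$ goes $\wc X(\bT_0^{\ast}) \to \wc X(\bT_0^{\ast})\oplus\wc X(\bar\bT_0^{\ast})$, $\gamma\mapsto(\gamma,\bar\gamma)$, and $X(\pi)$ goes $X(\bT_0)\to X(\bT_0)\oplus X(\rZ^{\circ}(\bG))$, $\chi\mapsto(\chi,\bar\chi)$; under $\delta_0$ and $\bar\delta_0$ these are literally the same map, which is exactly \eqref{eq:dual-isotypies} with $g=g^{\ast}=1$.

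The routine part is the lattice bookkeeping in the previous paragraph; the only place requiring a little care — and what I expect to be the main obstacle — is getting the variances and the choice of witness on the product exactly right, in particular confirming that the quotient map $\delta_0 \bmod \langle\Phi_{\bG}\rangle$ really does land in $\wc X(\bar\bT_0^{\ast})$ (equivalently, that $\delta_0$ sends $\langle\Phi_{\bG}(\bT_0)\rangle$ into $\langle\wc\Phi_{\bG^{\ast}}(\bT_0^{\ast})\rangle$, which holds since $\delta_0$ matches roots with coroots), and dually that it identifies the orthogonal-to-roots sublattice $\wc X(\rZ^{\circ}(\bG)) \subseteq \wc X(\bT_0)$ with $\wc X(\bar\bT_0^{\ast})$ correctly. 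Once these identifications are pinned down, both statements follow by inspection, so the write-up should be short: state the witness for the first claim, verify \eqref{eq:commute-frob} for $\bar\delta_0$, then for the second claim observe $\pi$ and $\pi^{\ast}$ are isotypies and check \eqref{eq:dual-isotypies} with $g = g^{\ast} = 1$ by the lattice computation above, invoking \cite[Prop.~11.4.8]{dmbook2} as the model for the argument.
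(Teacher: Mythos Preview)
Your approach is essentially the same as the paper's: both arguments identify the character and cocharacter lattices of $\rZ^{\circ}(\bG)$ and of $\bG^{\ast}/(\bG^{\ast})_{\der}$ in terms of the root datum, check that $\delta_0$ induces the required isomorphism on these, and then verify \eqref{eq:dual-isotypies} for $\pi$ and $\pi^{\ast}$ by the diagonal lattice computation $\chi \mapsto (\chi,\bar\chi)$.

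There is, however, a genuine slip in your identifications. You write $X(\rZ^{\circ}(\bG)) = X(\bT_0)/\langle\Phi_{\bG}(\bT_0)\rangle$ and $\wc X(\bar\bT_0^{\ast}) = \wc X(\bT_0^{\ast})/\langle\wc\Phi_{\bG^{\ast}}(\bT_0^{\ast})\rangle$. Neither is correct in general: if $Q = \langle\Phi_{\bG}(\bT_0)\rangle$ then $X(\bT_0)/Q$ can have torsion (precisely when $\rZ(\bG)$ is disconnected), so it is not the character lattice of any torus. The correct statement, which the paper invokes from \cite[Prop.~8.1.8]{Spr}, is $X(\rZ^{\circ}(\bG)) = X(\bT_0)/Q^{\top}$, where $Q^{\top}$ is the saturation of $Q$ (so $Q^{\top}/Q$ is the torsion subgroup of $X(\bT_0)/Q$); and dually $\wc X(\bG^{\ast}/(\bG^{\ast})_{\der}) = \wc X(\bT_0^{\ast})/(\wc Q^{\ast})^{\top}$, obtained via $(\wc Q^{\ast})^{\perp\perp} = (\wc Q^{\ast})^{\top}$. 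The argument is then saved by the observation that since $\delta_0(Q) = \wc Q^{\ast}$, also $\delta_0(Q^{\top}) = (\wc Q^{\ast})^{\top}$, so $\delta_0$ descends to the correct quotients. Your write-up needs this correction; once made, the rest of your outline (including the second part with $X(\pi)(\chi) = (\chi,\chi+Q^{\top})$) goes through exactly as in the paper.
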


\begin{proof}
Choose a witness $(\bT,\bT^{\star},\delta)$ to the duality and let $Q \subseteq
X(\bT)$ be the subgroup generated by the roots $\Phi_{\bG}(\bT)$. By
\cite[Prop.~8.1.8]{Spr} we have $X(\rZ^{\circ}(\bG)) = X(\bT)/Q^{\top}$ where
$Q^{\top}/Q$ is the torsion subgroup of $X(\bT)/Q$. We have a short exact
sequence
\begin{center}
\begin{tikzcd}
1 
\arrow[r]
& 
(\bT^{\ast})_{\der}
\arrow[r]
&
\bT^{\ast}
\arrow[r]
&
\bG^{\ast}/(\bG^{\ast})_{\der}
\arrow[r]
&
1
\end{tikzcd}
\end{center}
where $(\bT^{\ast})_{\der} := \bT^{\ast} \cap (\bG^{\ast})_{\der}$ and thus a
short exact sequence
\begin{center}
\begin{tikzcd}
1 
\arrow[r]
& 
X(\bG^{\ast}/(\bG^{\ast})_{\der})
\arrow[r]
&
X(\bT^{\ast})
\arrow[r]
&
X((\bT^{\ast})_{\der})
\arrow[r]
&
1.
\end{tikzcd}
\end{center}

By \cite[Prop.~8.1.8]{Spr} $X((\bT^{\ast})_{\der}) \cong
X(\bT^{\ast})/(\wc{Q}^{\ast})^{\perp}$ where $\wc{Q}^{\ast} \subseteq
\wc{X}(\bT^{\ast})$ is the subgroup generated by the coroots
$\wc\Phi_{\bG^{\ast}}(\bT^{\ast})$ and
\begin{equation*}
(\wc{Q}^{\ast})^{\perp} 
= 
\{x \in X(\bT^{\ast}) \mid \langle x,y\rangle = 0\text{ for all }y \in
\wc{Q}^{\ast}\}.
\end{equation*}
Therefore $X(\bG^{\ast}/(\bG^{\ast})_{\der}) = (\wc Q^{\ast})^{\perp}$ so
$\wc{X}(\bG^{\ast}/(\bG^{\ast})_{\der}) =
\wc{X}(\bT^{\ast})/(\wc{Q}^{\ast})^{\top}$ because $(\wc{Q}^{\ast})^{\perp\perp}
= (\wc{Q}^{\ast})^{\top}$. We get the first statement because $\delta(Q)  = \wc
Q^{\ast}$ and so $\delta(Q^{\top}) = (\wc Q^{\ast})^{\top}$.

The map $\delta' = \delta\oplus\delta : X(\bT)\oplus X(\rZ^{\circ}(\bG)) \to
\wc{X}(\bT^{\ast}) \oplus \wc{X}(\bG^{\ast}/(\bG^{\ast})_{\der})$ is an
isomorphism of root data. Further,
\begin{equation*}
X(\pi)
: 
X(\bT)
\to
X(\bT) \oplus X(\rZ^{\circ}(\bG))
\end{equation*}
is given by $\chi \mapsto (\chi,\chi + Q^{\top})$ and from this we deduce that
$\wc{X}(\pi^*)\circ \delta = \delta' \circ X(\pi)$.
\end{proof}

We are now ready to prove one of our main results.

\begin{theorem}\label{thm:prop-6-DM}
Assume $\phi : (\bG,F) \to (\bG',F')$ is an isotypy between finite reductive
groups with both $\rZ(\bG)$ and $\rZ(\bG')$ connected and let $\phi^{\ast} :
(\bG'^{\ast},F^{\ast}) \to (\bG^{\ast},F^{\ast})$ be an isotypy dual to $\phi$.
If $s' \in G'^{\ast}$ is a semisimple element and $s = \phi^{\ast}(s')$, then
\begin{equation*}
J_{s'}^{\bG'} = \tw{\top}\phi^{\ast} \circ J_s^{\bG} \circ \tw{\top}\phi,
\end{equation*}
where $J_s^{\bG} : \cE(G,s) \to \cE(\rC_{G^{\ast}}(s),1)$ and $J_{s'}^{\bG'} :
\cE(G',s') \to \cE(\rC_{G'^{\ast}}(s'),1)$ are the unique Jordan decompositions
defined by Digne--Michel in \cite{DiMi90}.
\end{theorem}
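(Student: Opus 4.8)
The plan is to reduce the general isotypy case to two cases which are already handled, or nearly handled, in the literature: the case of a surjective isotypy (which factors as a central-torus quotient, i.e. an isotypy whose kernel is a central torus) and the case of an injective isotypy (a regular-type embedding with $\bG_{\der}' = \phi(\bG)_{\der}$). Since $\rZ(\bG)$ and $\rZ(\bG')$ are both connected, an arbitrary isotypy $\phi$ factors as $\bG \hookrightarrow \bG \times \rZ^{\circ}(\bG') \to \bG'$ — or more precisely, one exhibits $\phi$ as a composite of an injective isotypy followed by a surjective isotypy, each again between groups with connected center. The functoriality claim $J_{s'}^{\bG'} = \tw{\top}\phi^{\ast} \circ J_s^{\bG}\circ\tw{\top}\phi$ is transitive with respect to composition of (dual) isotypies, so it suffices to treat these two building blocks separately. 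One must of course check that dual isotypies compose correctly and that the intermediate group can be taken with connected center; Lemma~\ref{lem:dual-of-radical} is designed precisely to handle the product/projection bookkeeping on the dual side.

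For the surjective case, where $\phi$ is an epimorphism with kernel a central torus $\bK \leqslant \rZ(\bG)$, the key input is Corollary~\ref{cor:R_T^G}: since $\bK$ is connected we have $\tw{\top}\phi\circ R_{\bT'}^{\bG'} = R_{\bT}^{\bG}\circ\tw{\top}\phi$ for dual maximal tori, and similarly on the centralizer side (noting $\phi^{\ast}$ restricts to a surjective isotypy $\rC_{\bG'^{\ast}}^{\circ}(s') \to \rC_{\bG^{\ast}}^{\circ}(s)$ with connected-torus kernel by \cite[Eq.~(2.2)]{Bon06}). Then $\tw{\top}\phi^{\ast}\circ J_s^{\bG}\circ\tw{\top}\phi$ satisfies property~(1) of Digne--Michel's theorem (using the multiplicative behaviour of $\varepsilon$-signs and the fact that the scalar products are preserved), and one checks it also satisfies the normalisation conditions~(2a), (2b) and the compatibility conditions~(3)--(7) — here the compatibility results of Section~\ref{sec:JD-and-isotypies}, namely Propositions~\ref{Jordiff12}, \ref{Jordiff3}, \ref{Jordiff45} and \ref{prop:commute-diag-isotypies}, do most of the work. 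By Digne--Michel's unicity (their Theorem~7.1, valid since $\rZ(\bG)$ is connected), the transported map must equal $J_{s'}^{\bG'}$. The injective case is entirely dual: $\phi$ is a regular-type embedding, $\phi^{\ast}$ is a surjection with central-torus kernel, and the same argument applies with the roles of $\bG$ and $\bG'$ (resp. of $\phi$ and $\phi^{\ast}$) interchanged, again invoking Corollary~\ref{cor:R_T^G} and Digne--Michel unicity.

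The main obstacle I expect is twofold. First, one must make sure that every one of properties (1)--(7) in Digne--Michel's list is stable under the transport $f \mapsto \tw{\top}\phi^{\ast}\circ f\circ\tw{\top}\phi$ \emph{in the connected-center setting}; the propositions of Section~\ref{sec:JD-and-isotypies} are stated for general isotypies under the standing hypothesis $\rC_{G^{\ast}}(s)\leqslant\rC_{\bG^{\ast}}^{\circ}(s)$, which is automatic here since $\rZ(\bG)$ connected forces centralizers of semisimple elements to be connected, so that hypothesis is not an issue — but one must still track properties (6) and (7) carefully, since the strengthened form of (6) is part of what this very theorem is meant to establish, so there is a danger of circularity. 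The resolution is that we only need Digne--Michel's \emph{original} (weaker) property (vi) — valid for epimorphisms with central-torus kernel — to pin down the map via unicity, and the stronger statement for all isotypies is then \emph{derived} as the conclusion of Theorem~\ref{thm:prop-6-DM}. Second, one must verify the factorisation of an arbitrary isotypy between connected-center groups into an injection followed by a surjection where the intermediate group still has connected center and the dual factorisation is the expected one; this is where Lemma~\ref{lem:dual-of-radical} and Lemma~\ref{lem:isotypy-der-diag} are used, and the bookkeeping — especially matching up the chosen dual isotypies $\phi^{\ast}$ up to $\rN_{\bG'^{\ast}}(G'^{\ast})$-conjugacy as in Lemma~\ref{lem:dual-isotypies}, and checking the resulting ambiguity does not affect $J$ — is the most delicate routine part.
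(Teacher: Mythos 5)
Your factorisation $\phi = \tilde\phi\circ\iota$ through the roof group $\wt\bG = \bG \times \rZ(\bG')$ is the right first move, and the paper uses exactly this intermediate group. But your treatment of the injective leg does not work, and this is precisely where the real content of the argument lives.

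Digne--Michel's property (vi) is genuinely asymmetric: it is stated only for \emph{epimorphisms} with central-torus kernel. Given the injection $\iota \colon \bG \hookrightarrow \wt\bG$, the dual $\iota^{\ast}$ is indeed a surjection with central-torus kernel, but DM's (vi) applied to $\iota^{\ast}\colon \wt\bG^{\ast}\to\bG^{\ast}$ would relate the Jordan maps for $\wt G^{\ast}$ and $G^{\ast}$, which are different groups from $\wt G$ and $G$ --- there is no ``role reversal'' that turns this into the statement you need. Your fallback --- transporting properties via Section~\ref{sec:JD-and-isotypies} and invoking unicity --- has a coherence problem you don't resolve: unicity is a statement about an entire compatible \emph{family} indexed over all $(\bG,s)$, satisfying properties that quantify over Levi subgroups and other isotypies, so exhibiting one transported bijection as satisfying a list of conditions is not enough; one would have to produce a whole transported family and check global compatibility, which is essentially as hard as the theorem itself.

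The paper sidesteps the injective case entirely by building a co-span rather than a span: it uses \emph{two epimorphisms out of} $\wt\bG$, namely the projection $\pi_1\colon \wt\bG\to\bG$ (kernel $1\times\rZ(\bG')$) and $\tilde\phi\colon\wt\bG\to\bG'$, $(g,z)\mapsto\phi(g)z$ (kernel $\cong\rZ(\bG)$). DM's (vi) applies to both. The remaining delicacy is that the composite you want on one side is $\tilde\phi' = \phi\circ\pi_1 \colon (g,z)\mapsto\phi(g)$, which is \emph{not} surjective, and differs from $\tilde\phi$ by multiplying in the $\rZ(\bG')$-coordinate. The paper handles this via DM's property (iii) (central twisting: $\tw{\top}\tilde\phi(\chi) = \tw{\top}\tilde\phi'(\chi)\cdot\hat{z}$), together with the observation that $\tw{\top}\tilde\phi^{\ast}$ and $\tw{\top}\tilde\phi'^{\ast}$ agree on $\cE(\rC_{\wt G^{\ast}}(\tilde s),1)$ because every such unipotent character has the form $\psi\boxtimes\triv$. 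That central-twist-plus-unipotent-product argument is the missing ingredient in your proposal; without it, the injective leg is a genuine gap.
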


\begin{proof}
Let $\wt\bG = \bG\times\rZ(\bG')$ and $\wt\bG' = \bG'\times \rZ(\bG')$ be
equipped with the  Frobenius endomorphisms $ F\times F' : \wt\bG \to
\wt\bG$ and $ F'\times F' : \wt\bG' \to \wt\bG'$, respectively, which we again denote by $F$ and $F'$. We denote by
$\pi : \wt\bG' \to \bG'$ the natural product morphism $(g,z) \mapsto gz$. Note
that the group $\rZ(\wt\bG) = \rZ(\bG) \times \rZ(\bG')$ is connected. By
Lemma~\ref{lem:dual-of-radical} we have $\wt\bG^{\ast} = \bG^{\ast} \times
\bG'^{\ast}/(\bG'^{\ast})_{\der}$ and $\wt\bG'^{\ast} = \bG'^{\ast} \times
\bG'^{\ast}/(\bG'^{\ast})_{\der}$ are dual groups of $\wt\bG$ and $\wt\bG'$
respectively. Moreover, $\pi^{\ast} : \bG'^{\ast} \to \bG'^{\ast} \times
\bG'^{\ast}/(\bG'^{\ast})_{\der}$, defined by $g \mapsto
(g,g(\bG'^{\ast})_{\der})$, is dual to $\pi$.

Let $\tilde\phi = \pi \circ (\phi\times \mathrm{Id}) : \wt\bG \to \bG'$ be
defined by $\tilde\phi(g,z) = \phi(g)z$. Then by (ii) of
Lemma~\ref{lem:isotypy-der-diag}, we see the kernel
\begin{equation*}
\ker(\tilde{\phi}) = \{(z,\phi(z)^{-1}) \mid z \in \rZ(\bG)\} \cong \rZ(\bG)
\end{equation*}
is a central torus. As the kernel of $\tilde{\phi}$ is connected this restricts
to an epimorphism $\wt G \to G'$ at the level of rational points, see
\cite[Lem.~4.2.13]{dmbook2}, so $G' = \phi(G)\rZ(G')$. In other words, $G'$ is a
central product of $\phi(G)$ and $\rZ(G')$. Let $\tilde\phi' = \pi \circ
(\phi\times 1) : \wt\bG \to \bG'$ be the isotypy defined by $\tilde\phi'((g,z))
= \phi(g)$. If $\chi \in \irr(G')$ has central character $\omega_{\chi} =
\chi/\chi(1) \in \irr(\rZ(G'))$ and $\tilde{g} = (g,z) \in \wt G$ then
\begin{equation*}
\tw{\top}\tilde{\phi}(\chi)(\tilde{g})
= 
\chi(\phi(g)z)
= 
\chi(\phi(g))\omega_{\chi}(z)
=
\tw{\top}\tilde{\phi}'(\chi)(\tilde{g})\theta(\tilde{g})
\end{equation*}
where $\theta = 1\boxtimes \omega_{\chi} \in \irr(\wt G)$. In other words,
$\tw{\top}\tilde\phi(\chi) = \tw{\top}\tilde\phi'(\chi)\theta$. As $\omega_{\chi}$ is
constant on Lusztig series, see \cite[Prop.~9.11]{Bon06}, we see that
$\tw{\top}\tilde\phi(\chi) = \tw{\top}\tilde\phi'(\chi)\theta$ for all $\chi \in
\cE(G',s')$.

The isotypies $(\bG'^{\ast},F'^{\ast}) \to (\wt\bG^{\ast},F^{\ast})$ defined by
$\tilde{\phi}^{\ast} = (\phi^{\ast} \times \mathrm{Id})\circ\pi^{\ast}$ and 
$\tilde{\phi}'^{\ast} = (\phi^{\ast} \times 1)\circ\pi^{\ast}$, meaning
\begin{align*}
\tilde\phi^{\ast}(g) &= (\phi^{\ast}(g),g(\bG'^{\ast})_{\der}),\\
\tilde\phi'^{\ast}(g) &= (\phi^{\ast}(g),(\bG'^{\ast})_{\der}),
\end{align*}
for all $g \in \bG'^{\ast}$, are dual to $\tilde\phi$ and $\tilde\phi'$
respectively. Setting $\tilde{s} := (s,(\bG'^{\ast})_{\der})$ and $z :=
(1,s'(\bG'^{\ast})_{\der}) \in \rZ(\wt{\bG}^{\ast})$ we see that
$\tilde\phi^{\ast}(s') = \tilde sz$ and $\theta = \hat{z}$. Note that
$\tw{\top}\tilde\phi(\chi) = \tw{\top}\tilde\phi'(\chi)\theta \in \cE(\wt
G,\tilde sz)$ and $\tw{\top}\tilde\phi'(\chi) \in \cE(\wt G,\tilde s)$ by
Proposition~\ref{prop:isotypyseries}. By (iii) of \cite[Thm.~7.1]{DiMi90}
\begin{equation*}
J_{\tilde sz}^{\wt\bG}\circ\tw{\top}\tilde\phi
=
J_{\tilde s}^{\wt\bG}\circ\tw{\top}\tilde\phi'.
\end{equation*}

On the other hand, $\rC_{\wt G^{\ast}}(\tilde s) = \rC_{\wt G^{\ast}}(\tilde sz)
= \rC_{G^{\ast}}(s) \times (\bG'^{\ast}/(\bG'^{\ast})_{\der})^{F'}$ and the maps
$\tw{\top}\tilde\phi^{\ast}$ and $\tw{\top}\tilde\phi'^{\ast}$ define the same
bijection $\cE(\rC_{\wt G^{\ast}}(\tilde s),1) \to \cE(\rC_{G'^{\ast}}(s'),1)$
because every unipotent character of $\rC_{\wt G^{\ast}}(\tilde s)$ is of the
form $\psi \boxtimes \triv$ for some $\psi \in \cE(\rC_{G^{\ast}}(s),1)$.
Therefore,
\begin{equation*}
J_{s'}^{\bG'}
=
\tw{\top}\tilde\phi^{\ast} 
    \circ J_{\tilde sz}^{\wt\bG} 
    \circ \tw{\top}\tilde\phi
=
\tw{\top}\tilde\phi'^{\ast}
    \circ J_{\tilde s}^{\wt\bG}
    \circ \tw{\top}\tilde\phi'.
\end{equation*}
by (vi) of \cite[Thm.~7.1]{DiMi90}.

Now let $\pi_1 : (\wt\bG,F) \to (\bG,F)$ be the natural projection onto the
first factor. This is an epimorphism with kernel $\ker(\pi) = 1\times \rZ(\bG')$
a central torus. The map $\pi_1^{\ast} : (\bG^{\ast},F^{\ast}) \to
(\wt\bG^{\ast},F^{\ast})$ defined by $\pi_1^{\ast}(g) =
(g,(\bG'^{\ast})_{\der})$ is dual to $\pi_1$, and another
application of (vi) of \cite[Thm.~7.1]{DiMi90} gives us 
\begin{equation*}
J_s^{\bG}
=
\tw{\top}\pi_1^{\ast} 
    \circ J_{\pi_1^{\ast}(s)}^{\wt\bG} 
    \circ \tw{\top}\pi_1
=
\tw{\top}\pi_1^{\ast} 
    \circ J_{\tilde s}^{\wt\bG} 
    \circ \tw{\top}\pi_1.
\end{equation*}
Using that $\tilde\phi' = \phi\circ\pi_1$ we find
\begin{equation*}
\tw{\top}\phi^{\ast} \circ J_s^{\bG} \circ \tw{\top}\phi
= 
\tw{\top}\tilde\phi'^{\ast} 
    \circ J_{\tilde{s}}^{\wt\bG}
    \circ \tw{\top}\tilde\phi',
\end{equation*}
which completes the proof.
\end{proof}

With this in hand, we can now define the bijections of Theorem~\ref{UniqueJord}.
For the rest of this section, we assume that
\begin{equation*}
\cD = ((\bG,F),(\bG^{\ast},F^{\ast}),(\bT_0,\bT_0^{\ast},\delta_0))
\end{equation*}
is a rational duality. Let $\iota : (\bG,F) \to (\wt\bG,F)$ be a regular
embedding and $\iota^{\ast} : (\wt\bG^{\ast},F^{\ast}) \to
(\bG^{\ast},F^{\ast})$ a dual isotypy. Recall from \cite[Cor.~2.6]{Bon06} that
$\iota^{\ast}(\wt G^{\ast}) = G^{\ast}$, in other words, $\iota^{\ast}$
restricts to an epimorphism $\wt G^{\ast} \twoheadrightarrow G^{\ast}$ between
finite groups.

From now on, $s \in G^{\ast}$ is a semisimple element with $\rC_{G^{\ast}}(s)
\leqslant \rC_{\bG^{\ast}}^{\circ}(s)$ and $\tilde s \in \wt G^{\ast}$ is an
element such that $s = \iota^{\ast}(\tilde s)$. Then using Lemma \ref{restrictionbij}, there exists a unique bijection
$J_{s,\iota}^{\cD} : \cE(G,s) \to \cE(\rC_{G^{\ast}}(s),1)$ such that
\begin{equation}\label{eq:def-Js}
J_{\tilde s}^{\wt\cD} 
:= \tw{\top}\iota^{\ast} \circ J_{s,\iota}^{\cD} \circ \tw{\top}\iota
: 
\cE(\wt G,\tilde s) \to \cE(\rC_{\wt G^{\ast}}(\tilde s),1)
\end{equation}
is Digne--Michel's unique Jordan decomposition, as defined by
\cite[Thm.~7.1]{DiMi90}. As in Section~\ref{sec:DMmap} we will denote $J_{\tilde
s}^{\wt \cD}$ and $J_{s,\iota}^{\cD}$ by $J_{\tilde s}^{\wt \bG}$ and
$J_{s,\iota}^{\bG}$, respectively. We will need some basic properties of
$J_s^{\cD}$ that follow from the results in \cite{DiMi90}.

\begin{proposition}\label{prop:bumper-Js}
\mbox{}
\begin{enumerate}
    \item If $\tilde{s}' \in \wt G^{\ast}$ is another element such that $s =
\iota^{\ast}(\tilde{s}')$ then $J_{\tilde s'}^{\wt\bG} = \tw{\top}\iota^{\ast}
\circ J_{s,\iota}^{\bG} \circ \tw{\top}\iota$ as bijections $\cE(\wt G,\tilde
s') \to \cE(\rC_{\wt G^*}(\tilde s),1)$. In particular, $J_{s,\iota}^{\bG}$ is
independent of the choice of element $\tilde s$ used in its definition.

    \item If $g \in \rN_{\bG^{\ast}}(G^{\ast})$ then $J_{s^g,\iota}^{\bG} =
    \tw{\top}\Ad_g\circ J_{s,\iota}^{\bG}$.
    \item If $\iota_i : (\bG,F) \to (\wt\bG_i,F)$ are regular embeddings, with
    $i \in \{1,2\}$, then $J_{s,\iota_1}^{\bG} = J_{s,\iota_2}^{\bG}$.
\end{enumerate}
\end{proposition}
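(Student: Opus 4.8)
The plan is to prove the three parts in order, since (1) feeds directly into the key independence statement in (3), and (2) is a bookkeeping statement about the $\rN_{\bG^*}(G^*)$-action that will also be needed. For (1), the crucial point is that two preimages $\tilde s, \tilde s'$ of $s$ under $\iota^*$ differ by an element of $\ker(\iota^*)^{F^*} \leqslant \rZ(\wt G^*)$: writing $\tilde s' = \tilde s z$ with $z \in \rZ(\wt G^*)$ and $\iota^*(z) = 1$, I would use property (3) of Digne--Michel's theorem (the $\rZ(\wt G^*)$-equivariance of $J^{\wt\bG}$) together with the fact that $\iota^*(z)=1$ forces $\hat z$ to be trivial after composing with $\tw\top\iota$, i.e.\ $\tw\top\iota(\tilde\chi \otimes \hat z) = \tw\top\iota(\tilde\chi)$ since $\hat z$ restricts trivially to $G$. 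This should give $J^{\wt\bG}_{\tilde s'}\circ\tw\top\iota = J^{\wt\bG}_{\tilde s}\circ\tw\top\iota$ after identifying $\cE(\rC_{\wt G^*}(\tilde s'),1) = \cE(\rC_{\wt G^*}(\tilde s),1)$ (these centralizers coincide since $z$ is central). Rearranging via the bijectivity of $\tw\top\iota$ on the relevant series (Lemma~\ref{restrictionbij}) yields the claimed formula, and hence the well-definedness of $J_{s,\iota}^{\bG}$ independent of $\tilde s$.

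For (2), I would argue that $\Ad_g$ induces an isotypy $(\bG^*,F^*)\to(\bG^*,F^*)$ (indeed an isomorphism) carrying $s$ to $s^g$, and lift $g$ to an element $\tilde g \in \rN_{\wt\bG^*}(\wt G^*)$ via the surjection $\iota^*$ (using Lemma~\ref{lem:isotypy-der-diag}(iv) and Lemma~\ref{lem:normalizer-is-lang-preimage}, one can even take $\tilde g$ in the derived subgroup lifting). Then $\tilde s^{\tilde g}$ is a preimage of $s^g$, and since $\Ad_{\tilde g}$ is an isotypy with connected kernel between groups with connected center, Theorem~\ref{thm:prop-6-DM} (applied to the isotypy $\Ad_{\tilde g}$, or simply the known $\rN$-equivariance of the Digne--Michel bijection) gives $J^{\wt\bG}_{\tilde s^{\tilde g}} = \tw\top\Ad_{\tilde g}\circ J^{\wt\bG}_{\tilde s}\circ \tw\top\Ad_{\tilde g^{-1}}$. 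Pushing this through $\tw\top\iota^*$ and $\tw\top\iota$, and using that $\iota^*\circ\Ad_{\tilde g} = \Ad_g\circ\iota^*$ (and the dual relation on the $\bG$-side with $\Ad_{g_0}$ for a lift $g_0$ of... — in fact on the $\bG$-side the conjugation is trivial on characters of $G$ since it comes from an inner automorphism of $\bG^*$ that need not touch $\bG$, so $\tw\top\iota$ intertwines appropriately), one reads off $J^{\bG}_{s^g,\iota} = \tw\top\Ad_g\circ J^{\bG}_{s,\iota}$ from the defining equation~\eqref{eq:def-Js}.

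Part (3) is where the real work lies, and I expect it to be the main obstacle. Given two regular embeddings $\iota_i : \bG \hookrightarrow \wt\bG_i$, the standard trick is to form the fiber product $\wt\bG_1 \times_{\bG} \wt\bG_2$, or more concretely the group $\wt\bG_3 = (\wt\bG_1 \times \wt\bG_2)/\bG$ where $\bG$ embeds antidiagonally — this again has connected center, and there are natural regular embeddings (epimorphisms in the other direction) connecting $\wt\bG_3$ to each $\wt\bG_i$ and compatible projections down to... Actually the cleanest route is: both $\iota_i$ factor through a common regular embedding after passing to a suitable $\wt\bG_3$ admitting isotypies $\pi_i : \wt\bG_3 \to \wt\bG_i$ with $\pi_i\circ\iota_3 = \iota_i$ for a regular embedding $\iota_3 : \bG\to\wt\bG_3$, where the $\pi_i$ are epimorphisms with central torus kernel. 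Then Theorem~\ref{thm:prop-6-DM} applied to each $\pi_i$ (an isotypy between groups with connected center!) gives $J^{\wt\bG_i}_{\tilde s_i} = \tw\top\pi_i^*\circ J^{\wt\bG_3}_{\tilde s_3}\circ\tw\top\pi_i$ for compatible preimages. Combining these with the defining equation~\eqref{eq:def-Js} for each $\iota_i$ — and crucially using part (1) to free ourselves from the choice of preimages, plus the commutativity $\iota_i^*\circ\pi_i^* = \iota_3^*$ of the dual maps (up to the $\rN$-ambiguity controlled by part (2)) — collapses everything: $J^{\bG}_{s,\iota_1}$ and $J^{\bG}_{s,\iota_2}$ are both forced to equal $J^{\bG}_{s,\iota_3}$. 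The technical obstacle is constructing $\wt\bG_3$ with all the required compatibilities and checking the dual-side diagram commutes on the nose (not just up to $\rN_{\bG^*}(G^*)$-conjugacy), which is exactly where Lemma~\ref{lem:dual-isotypies} and the careful choice of witnesses from Section~\ref{sec:DMmap} have to be invoked; I would expect to spend most of the proof pinning down these compatibilities.
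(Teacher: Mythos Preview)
Your plan is essentially the same as the paper's proof. Part (1) matches exactly: write $\tilde s' = \tilde s z$ with $z \in \ker(\iota^*)^{F^*}$, use $\tw\top\iota(\tilde\chi\otimes\hat z) = \tw\top\iota(\tilde\chi)$ and property (iii) of \cite[Thm.~7.1]{DiMi90}. Part (2) is also the paper's argument, though you overcomplicate it: the paper simply picks $\tilde g \in \wt G^*$ with $\iota^*(\tilde g)=g$ (using that $\iota^*$ is surjective on rational points) and invokes \cite[Cor.~7.3]{DiMi90}, which gives $\tw\top\Ad_{\tilde g}\circ J^{\wt\bG}_{\tilde s} = J^{\wt\bG}_{\tilde s^{\tilde g}}$ directly. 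Your displayed formula $J^{\wt\bG}_{\tilde s^{\tilde g}} = \tw\top\Ad_{\tilde g}\circ J^{\wt\bG}_{\tilde s}\circ \tw\top\Ad_{\tilde g^{-1}}$ is wrong as written---the rightmost $\tw\top\Ad_{\tilde g^{-1}}$ makes no sense, since $\tilde g$ lives in $\wt\bG^*$ and does not act on $\cE(\wt G,\tilde s)$---but you immediately recover the correct statement by appealing to ``the known $\rN$-equivariance'', so this is a slip rather than a gap.

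For part (3) your strategy is again the paper's---build a common connected-centre roof, apply Theorem~\ref{thm:prop-6-DM} along each leg, and absorb the dual-isotypy ambiguity using (1) and (2)---but the roof is built differently. You propose a $\wt\bG_3$ sitting \emph{above} both $\wt\bG_i$ via epimorphisms $\pi_i:\wt\bG_3\to\wt\bG_i$ with $\pi_i\circ\iota_3=\iota_i$; the paper instead cites Asai's result \cite[Thm.~1.19]{Tay19} to produce a $\bG'$ sitting above both $\wt\bG_i$ via regular \emph{embeddings} $\iota_i':\wt\bG_i\to\bG'$ with $\iota_1'\circ\iota_1=\iota_2'\circ\iota_2$. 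Either construction works (your $\wt\bG_3$ can be realised as $(\iota_1,\iota_2)(\bG)\cdot\rZ^\circ(\wt\bG_1\times\wt\bG_2)$ inside $\wt\bG_1\times\wt\bG_2$), and the endgame is identical: the dual square commutes only up to some $\Ad_g$ with $g$ in the relevant normaliser, and (2) disposes of that. Quoting Asai saves you the explicit construction; your version is slightly more self-contained.
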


\begin{proof}
(1). By the above remark we have $\tilde{s}' = \tilde{s}z$ for some $z \in
\ker(\iota^{\ast})^{F^{\ast}} \leqslant \rZ(\wt G^{\ast})$. The characters of
$\mathcal{E}(\wt{G}, \tilde{s}')$ are just $\tilde{\chi}\otimes \wh{z}$ for
$\tilde{\chi}\in\mathcal{E}(\wt{G}, \tilde{s})$. Hence it suffices to note that
$\tw{\top}\iota(\tilde{\chi}\otimes \wh{z})=\tw{\top}\iota(\tilde{\chi})$ and
$\rC_{\wt{G}^\ast}(\tilde{s})=\rC_{\wt{G}^\ast}(\tilde{s}z)$ and apply property (iii)
of \cite[Thm.~7.1]{DiMi90}.

(2). Pick an element $\tilde g \in \wt G^{\ast}$ such that $\iota^{\ast}(\tilde g) =
g$. Note that we have
\begin{equation*}
\tw{\top}\Ad_{\tilde g} \circ J_{\tilde s}^{\wt \bG}
=
\tw{\top}\Ad_{\tilde g} \circ \tw{\top}\iota^{\ast} 
    \circ J_{s,\iota}^{\bG} \circ \tw{\top}\iota
=
\tw{\top}\iota^{\ast} \circ \tw{\top}\Ad_g 
    \circ J_{s,\iota}^{\bG} \circ \tw{\top}\iota.
\end{equation*}
By \cite[Cor.~7.3]{DiMi90}, suitably reformulated in our setting, we have the
left hand side is $J_{\tilde t}^{\wt \bG}$, where $\tilde t = \tilde s^{\tilde g}$. As
$\iota^{\ast}(\tilde t) = s^g$ the statement follows from (1).

(3). By a result of Asai, see \cite[Thm.~1.19]{Tay19}, there exist regular
embeddings $\iota_i' : (\wt\bG_i,F) \to (\bG',F)$ such that the following
diagram is commutative
\begin{center}
\begin{tikzcd}[sep=1cm]
\bG
\arrow[r,"\iota_1"]
\arrow[d,swap,"\iota_2"]
&
\wt\bG_1
\arrow[d,"\iota_1'"]
\\
\wt\bG_2
\arrow[r,"\iota_2'"]
&
\bG'
\end{tikzcd}
\end{center}
Assume we have fixed dual isotypies $\iota_i'^{\ast} : (\bG'^{\ast},F^{\ast})
\to (\bG_i^{\ast},F^{\ast})$. By Lemma \ref{lem:dual-isotypies}, there exists an element $g \in
\rN_{\bG'^{\ast}}(G'^{\ast})$ such that
\begin{equation*}
\iota_1^{\ast} \circ \iota_1'^{\ast}
=
\iota_2^{\ast} \circ \iota_2'^{\ast} \circ \Ad_g.
\end{equation*}
Pick an element $s_1' \in G'^{\ast}$ such that $s =
\iota_1^{\ast}(\iota_1'^{\ast}(s_1'))$ and let $\tilde s_1 =
\iota_1'^{\ast}(s_1')$. By Theorem~\ref{thm:prop-6-DM}, we have
\begin{align*}
J_{s_1'}^{\bG'}
&=
\tw{\top}\iota_1'^{\ast}
    \circ J_{\tilde s_1}^{\wt\bG_1}
    \circ \tw{\top}\iota_1'
\\
&= 
\tw{\top}\iota_1'^{\ast}
    \circ \tw{\top}\iota_1^{\ast} 
    \circ J_{s,\iota_1}^{\bG} 
    \circ \tw{\top}\iota_1
    \circ \tw{\top}\iota_1'
\\
&= 
\tw{\top}\Ad_g
    \circ \tw{\top}\iota_2'^{\ast}
    \circ \tw{\top}\iota_2^{\ast} 
    \circ J_{s,\iota_1}^{\bG} 
    \circ \tw{\top}\iota_2
    \circ \tw{\top}\iota_2'.
\end{align*}

On the other hand, if $s_2' = {}^gs_1'$ and $\tilde{s}_2 =
\iota_2'^{\ast}(s_2')$, then $s = \iota_2^{\ast}(\tilde{s}_2)$ and by (2) and
Theorem~\ref{thm:prop-6-DM}, we see that
\begin{equation*}
J_{s_1'}^{\bG'}
=
\tw{\top}\Ad_g
    \circ J_{s_2'}^{\bG'}
= 
\tw{\top}\Ad_g
    \circ \tw{\top}\iota_2'^{\ast}
    \circ J_{\tilde s_2}^{\wt \bG_2}
    \circ \tw{\top}\iota_2'.
\end{equation*}
Comparing these two equalities shows that $J_{\tilde{s}_2}^{\wt\bG_2} =
\tw{\top}\iota_2^{\ast} \circ J_{s,\iota_1}^{\bG} \circ \tw{\top}\iota_2$, which
proves the statement.
\end{proof}

In light of Proposition~\ref{prop:bumper-Js} we will denote the bijection
$J_{s,\iota}^{\cD}$ defined in \eqref{eq:def-Js} simply by $J_s^{\cD}$ or
$J_s^{\bG}$. Let us start by pointing out that $J_s^{\bG}$ is a
$\gal$-equivariant Jordan decomposition map, which will be sufficient for many
applications. By the proof of \cite[Lemma 3.4]{SFT18}, if $\sigma \in \gal$ then
there is some $s^\sigma\in G^\ast$ such that
$\mathcal{E}(G,s)^\sigma=\mathcal{E}(G,s^\sigma)$. Namely, if $\sigma$ maps
$|s|$th roots of unity to their $k$th power with $(|s|,k)=1$, then
$s^\sigma=s^k$. Of course, $\rC_{\bG^{\star}}(s^\sigma) = \rC_{\bG^{\star}}(s)$
and with this we extend the main result of \cite{SrVi19} to our setting.

\begin{lemma}\label{galequiv}
The bijection $J_s^{\bG} : \cE(G,s) \to \cE(\rC_{G^{\ast}}(s),1)$ is a Jordan
decomposition satisfying $J_{s^\sigma}^{\bG}(\chi^\sigma) =
J_s^{\bG}(\chi)^\sigma$ for any $\chi \in \cE(G,s)$ and $\sigma \in \gal$.
\end{lemma}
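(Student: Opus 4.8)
The plan is to reduce the claim to the corresponding $\gal$-equivariance statement for Digne--Michel's Jordan decomposition on the regular embedding $\wt\bG$, which was established in \cite{SrVi19}, and then transport it down via $\tw{\top}\iota$ and $\tw{\top}\iota^{\ast}$. First I would fix $\sigma \in \gal$ and recall that $\mathcal{E}(G,s)^{\sigma} = \mathcal{E}(G,s^{\sigma})$ with $s^{\sigma} = s^{k}$, and likewise $\mathcal{E}(\wt G,\tilde s)^{\sigma} = \mathcal{E}(\wt G, \tilde s^{\sigma})$ with $\tilde s^{\sigma} = \tilde s^{k}$; note that $\iota^{\ast}(\tilde s^{\sigma}) = \iota^{\ast}(\tilde s)^{k} = s^{\sigma}$, so $\tilde s^{\sigma}$ is a valid choice of preimage for $s^{\sigma}$, and by Proposition~\ref{prop:bumper-Js}(1) the map $J_{s^{\sigma}}^{\bG}$ does not depend on this choice. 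Similarly, on the target side, $\rC_{\bG^{\ast}}(s^{\sigma}) = \rC_{\bG^{\ast}}(s)$ and $\rC_{\wt G^{\ast}}(\tilde s^{\sigma}) = \rC_{\wt G^{\ast}}(\tilde s)$, so the codomains of $J_s^{\bG}$ and $J_{s^{\sigma}}^{\bG}$ coincide, and $\sigma$ acts on $\mathcal{E}(\rC_{G^{\ast}}(s),1)$; here one should check (as in \cite{SrVi19}) that $\sigma$ fixes each unipotent Lusztig series setwise, so $\mathcal{E}(\rC_{G^{\ast}}(s),1)^{\sigma} = \mathcal{E}(\rC_{G^{\ast}}(s),1)$.

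The key input is that Digne--Michel's bijection $J_{\tilde s}^{\wt\bG} : \cE(\wt G,\tilde s) \to \cE(\rC_{\wt G^{\ast}}(\tilde s),1)$ satisfies $J_{\tilde s^{\sigma}}^{\wt\bG}(\tilde\chi^{\sigma}) = J_{\tilde s}^{\wt\bG}(\tilde\chi)^{\sigma}$ for all $\tilde\chi$ and $\sigma$; this is precisely the main theorem of \cite{SrVi19} applied in the connected-center group $\wt\bG$ (combined with Lemma~\ref{lem:Js-is-DM-bij}, which identifies our $J_{\tilde s}^{\wt\bG}$ with the Digne--Michel map). Next I would observe that the restriction map $\tw{\top}\iota : \cE(\wt G,\tilde s) \to \cE(G,s)$ and the inflation-type map $\tw{\top}\iota^{\ast} : \cE(\rC_{\wt G^{\ast}}(\tilde s),1) \to \cE(\rC_{G^{\ast}}(s),1)$ are both $\gal$-equivariant: $\tw{\top}\iota(\tilde\chi^{\sigma}) = \tw{\top}\iota(\tilde\chi)^{\sigma}$ and $\tw{\top}\iota^{\ast}(\tilde\psi^{\sigma}) = \tw{\top}\iota^{\ast}(\tilde\psi)^{\sigma}$, because precomposition with an algebraic homomorphism (restriction or inflation of characters) commutes with the Galois action on character values, which only scales roots of unity. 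Crucially, these equivariance statements hold with the \emph{same} $\iota$, $\iota^{\ast}$ being used for both $s$ and $s^{\sigma}$, since $\iota^{\ast}(\tilde s^{\sigma}) = s^{\sigma}$.

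Putting these together: by the defining relation \eqref{eq:def-Js}, for $\chi = \tw{\top}\iota(\tilde\chi) \in \cE(G,s)$ we have
\begin{equation*}
J_s^{\bG}(\chi)^{\sigma}
= \bigl(\tw{\top}\iota^{\ast}(J_{\tilde s}^{\wt\bG}(\tilde\chi))\bigr)^{\sigma}
= \tw{\top}\iota^{\ast}\bigl(J_{\tilde s}^{\wt\bG}(\tilde\chi)^{\sigma}\bigr)
= \tw{\top}\iota^{\ast}\bigl(J_{\tilde s^{\sigma}}^{\wt\bG}(\tilde\chi^{\sigma})\bigr).
\end{equation*}
On the other hand $\chi^{\sigma} = \tw{\top}\iota(\tilde\chi^{\sigma})$ with $\tilde\chi^{\sigma} \in \cE(\wt G,\tilde s^{\sigma})$, so the defining relation for the preimage $\tilde s^{\sigma}$ of $s^{\sigma}$ gives $J_{s^{\sigma}}^{\bG}(\chi^{\sigma}) = \tw{\top}\iota^{\ast}(J_{\tilde s^{\sigma}}^{\wt\bG}(\tilde\chi^{\sigma}))$, and comparing yields the claim. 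The main obstacle I anticipate is not the diagram-chase itself but making sure the $\gal$-action is set up consistently across the regular embedding --- i.e.\ verifying that $\sigma$ permutes the relevant Lusztig series of $\wt G$, $G$, $\rC_{\wt G^{\ast}}(\tilde s)$, and $\rC_{G^{\ast}}(s)$ compatibly, that $\tilde s^{\sigma}$ is legitimately a preimage of $s^{\sigma}$ under $\iota^{\ast}$, and that the choice-independence in Proposition~\ref{prop:bumper-Js}(1) is genuinely needed to close the argument. Once that bookkeeping is in place, the equivariance of $\tw{\top}\iota$ and $\tw{\top}\iota^{\ast}$ together with the known result for $\wt\bG$ finishes it; I would also remark that this $J_s^{\bG}$ is a Jordan decomposition map, i.e.\ satisfies (1) of Theorem~\ref{UniqueJord}, which follows from property (1) for $J_{\tilde s}^{\wt\bG}$ together with Lemma~\ref{restrictionbij} and Corollary~\ref{cor:R_T^G} applied to $\iota$ and $\iota^{\ast}$.
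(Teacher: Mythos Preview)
Your proposal is correct and follows essentially the same approach as the paper: both reduce to the Galois-equivariance of Digne--Michel's map on $\wt\bG$ from \cite{SrVi19} and then transport down using the $\gal$-equivariance of $\tw{\top}\iota$ and $\tw{\top}\iota^{\ast}$. The only cosmetic differences are that the paper cites Proposition~\ref{Jordiff12} directly for the ``$J_s^{\bG}$ is a Jordan decomposition'' part (rather than unpacking it via Corollary~\ref{cor:R_T^G}), and that invoking Lemma~\ref{lem:Js-is-DM-bij} is unnecessary since $J_{\tilde s}^{\wt\bG}$ is \emph{defined} in \eqref{eq:def-Js} to be the Digne--Michel map.
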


\begin{proof}
By Proposition~\ref{Jordiff12}, we have $J_s^{\bG}$ is a Jordan decomposition.
If $\tilde\chi \in \cE(\wt G,\tilde s)$, then
$J_{\tilde{s}^\sigma}^{\wt\bG}(\tilde{\chi}^\sigma) =
J_{\tilde{s}}^{\wt\bG}(\tilde{\chi})^\sigma$ by \cite{SrVi19}. As
$\tw{\top}\iota^{\ast}$ and $\tw{\top}\iota$ commute with the Galois action,
$J_{s^\sigma}^{\bG}(\tw{\top}\iota(\tilde{\chi})^\sigma) =
J_s^{\bG}(\tw{\top}\iota(\tilde{\chi}))^\sigma$.
\end{proof}

We are now ready to prove that our bijection $J_s^{\bG}$ satisfies the
properties listed in Theorem~\ref{UniqueJord}. For this we will need the
following classical construction of regular embeddings, see \cite[1.21]{DL76}.

\begin{definition}\label{def:iotaspecific}
Let $(\bG,F)$ be a finite reductive group with fixed $F$-stable maximal torus
$\bT \leqslant \bG$ and define $\bG\times_{\rZ(\bG)}\bT = (\bG\times
\bT)/\Delta$, where
\begin{equation*}
\Delta = \Delta(\bG,\bT) = \{(z,z^{-1})\mid z \in \rZ(\bG)\}.
\end{equation*}
We have $\bG\times_{\rZ(\bG)}\bT$ inherits a Frobenius endomorphism, defined by
$F((g,t)\Delta) = (F(g),F(t))\Delta$ and the natural map $\iota_{\bT} : (\bG,F)
\to (\bG\times_{\rZ(\bG)}\bT,F)$, given by $g \mapsto (g,1)\Delta$, is a regular
embedding.
\end{definition}

If $\bT' \leqslant \bG$ is another $F$-stable maximal torus of $\bG$ then $\bT'
= {}^g\bT$ for some $g \in \bG$. It is not difficult to check that there is an
isomorphism $\bG \times_{\rZ(\bG)}\bT \to \bG\times_{\rZ(\bG)} \bT'$ given by
$(x,t)\Delta(\bG,\bT) \mapsto (x,{}^gt)\Delta(\bG,\bT')$. However, this
isomorphism does not commute with the induced Frobenius endomorphisms in
general. These regular embeddings have the following desirable lifting
properties.

\begin{lemma}\label{lem:epilifts}
Suppose $\phi : (\bG,F) \to (\bG_1,F_1)$ is an isotypy and $\bT \leqslant \bG$
and $\bT_1 \leqslant \bG_1$ are $F$-stable and $F_1$-stable maximal tori
satisfying $\phi(\bT) \leqslant \bT_1$. Then there exists an isotypy 
\begin{equation*}
\tilde{\phi}
: (\bG\times_{\rZ(\bG)} \bT,F) \to (\bG_1\times_{\rZ(\bG_1)}\bT_1,F_1)
\end{equation*}
satisfying $\iota_{\bT_1}\circ\phi = \tilde\phi \circ \iota_{\bT}$.
\end{lemma}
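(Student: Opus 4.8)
The plan is to construct $\tilde\phi$ directly from $\phi$ and the natural product maps, then check it has the required properties. First I would observe that the hypothesis $\phi(\bT) \leqslant \bT_1$ gives us a restriction $\phi|_{\bT} : \bT \to \bT_1$, so we obtain a morphism $\phi \times \phi|_{\bT} : \bG \times \bT \to \bG_1 \times \bT_1$. The key point is that this morphism sends $\Delta(\bG,\bT) = \{(z,z^{-1}) \mid z \in \rZ(\bG)\}$ into $\Delta(\bG_1,\bT_1)$: indeed by (ii) of Lemma~\ref{lem:isotypy-der-diag} we have $\phi(\rZ(\bG)) \leqslant \rZ(\bG_1)$ (in fact $\phi^{-1}(\rZ(\bG_1)) = \rZ(\bG)$), and for $z \in \rZ(\bG)$ we get $(\phi\times\phi|_{\bT})(z,z^{-1}) = (\phi(z),\phi(z)^{-1}) \in \Delta(\bG_1,\bT_1)$ since $\phi(z) \in \rZ(\bG_1)$. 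Hence $\phi \times \phi|_{\bT}$ descends to a well-defined homomorphism of algebraic groups
\begin{equation*}
\tilde\phi : \bG \times_{\rZ(\bG)} \bT \to \bG_1 \times_{\rZ(\bG_1)} \bT_1,
\qquad
(g,t)\Delta(\bG,\bT) \mapsto (\phi(g),\phi(t))\Delta(\bG_1,\bT_1).
\end{equation*}

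Next I would verify the three conditions needed for the statement. For Frobenius-compatibility: since the Frobenius on $\bG\times_{\rZ(\bG)}\bT$ is induced by $F\times F$ (Definition~\ref{def:iotaspecific}) and $\phi$ intertwines $F$ with $F_1$, while $\phi|_{\bT}$ intertwines $F|_{\bT}$ with $F_1|_{\bT_1}$ (because $\bT$ is $F$-stable, $\bT_1$ is $F_1$-stable, and $\phi\circ F = F_1\circ\phi$), the map $\tilde\phi$ commutes with the induced Frobenii, so $\tilde\phi\circ F = F_1\circ\tilde\phi$. For the commuting square $\iota_{\bT_1}\circ\phi = \tilde\phi\circ\iota_{\bT}$: both sides send $g \in \bG$ to $(\phi(g),1)\Delta(\bG_1,\bT_1)$, which is immediate from the formulas, using $\phi(1)=1$. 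For the isotypy conditions, I need $\ker(\tilde\phi) \leqslant \rZ(\bG\times_{\rZ(\bG)}\bT)$ and $(\bG_1\times_{\rZ(\bG_1)}\bT_1)_{\der} \leqslant \tilde\phi(\bG\times_{\rZ(\bG)}\bT)$. The center of $\bG\times_{\rZ(\bG)}\bT$ is the image of $\rZ(\bG)\times\bT$ (since $\bT$ is a maximal torus it is abelian, and $\rZ(\bG)\times\bT$ is the full preimage of the centre), and an element $(g,t)\Delta$ lies in $\ker(\tilde\phi)$ iff $(\phi(g),\phi(t)) \in \Delta(\bG_1,\bT_1)$, i.e. $\phi(g) = \phi(t)^{-1} \in \rZ(\bG_1)$; then $g \in \phi^{-1}(\rZ(\bG_1)) = \rZ(\bG)$ by Lemma~\ref{lem:isotypy-der-diag}(ii), so $(g,t)\Delta \in \rZ(\bG\times_{\rZ(\bG)}\bT)$. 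For the image condition, note that the derived subgroup of $\bG\times_{\rZ(\bG)}\bT$ is the image of $\bG_{\der}\times 1$, so $\tilde\phi$ maps it onto the image of $\phi(\bG_{\der}) \times 1 = (\bG_1)_{\der}\times 1$ (using Lemma~\ref{lem:isotypy-der-diag}(i)), whose image in $\bG_1\times_{\rZ(\bG_1)}\bT_1$ is exactly $(\bG_1\times_{\rZ(\bG_1)}\bT_1)_{\der}$; since both $\bG$ and $\bG_1\times_{\rZ(\bG_1)}\bT_1$ are connected reductive, this is all that is required for $\tilde\phi$ to be an isotypy.

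I do not anticipate a serious obstacle here; the content is essentially bookkeeping with the universal property of the central quotient $\bG\times_{\rZ(\bG)}\bT$, and every nontrivial input (behaviour of $\rZ$ and $\bG_{\der}$ under isotypies, Frobenius on the construction) has already been recorded in Lemma~\ref{lem:isotypy-der-diag} and Definition~\ref{def:iotaspecific}. The one point deserving a line of care is checking that $\rZ(\bG)\times\bT$ really is the preimage of $\rZ(\bG\times_{\rZ(\bG)}\bT)$, so that the kernel computation identifies $\ker(\tilde\phi)$ as central rather than merely contained in something; this follows since $\rZ(\bG\times_{\rZ(\bG)}\bT) = (\rZ(\bG)\times \bT)/\Delta$, as one sees by combining $\rZ(\bG\times\bT) = \rZ(\bG)\times\bT$ with the fact that $\Delta \subseteq \rZ(\bG)\times\bT$ and centres pass to quotients by central subgroups. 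With these checks in place the lemma follows.
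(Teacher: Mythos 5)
Your proposal is correct and follows essentially the same route as the paper: both define the map by $(g,t)\Delta \mapsto (\phi(g),\phi(t))\Delta_1$ and verify it descends through the central quotient using Lemma~\ref{lem:isotypy-der-diag}. The only difference is cosmetic -- the paper computes $\ker(\pi) = \bK\Delta$ directly for the map $\pi : \bG\times\bT \to \bG_1\times_{\rZ(\bG_1)}\bT_1$ and lets the isotypy conditions follow implicitly, whereas you spell out the kernel-centrality, image, and Frobenius-compatibility checks one by one, which is a slightly more verbose but equally valid presentation of the same argument.
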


\begin{proof}
We have a homomorphism $\pi : \bG \times \bT \to \bG_1 \times_{\rZ(\bG_1)}\bT_1$
defined by $\pi((x,t)) = (\phi(x),\phi(t))\Delta_1$ where $\Delta_1 =
\Delta(\bG_1,\bT_1)$. Certainly $(x,t) \in \ker(\pi)$ if and only if $\phi(x) =
\phi(t^{-1}) \in \rZ(\bG_1)$ so by Lemma~\ref{lem:isotypy-der-diag} $\ker(\pi) =
\bK\Delta$ where $\Delta = \Delta(\bG,\bT)$ and $\bK = \{(z,1) \mid z \in
\ker(\phi)\}$. It follows that $\pi$ factors through a homomorphism $\tilde{\phi} :
\wt{\bG} \to \wt{\bG}_1$ satisfying $\iota_{\bT_1}\circ\phi = \tilde\phi \circ
\iota_{\bT}$.
\end{proof}

With this we can now prove Theorem~\ref{UniqueJord}.

\begin{proof}[Proof of Theorem~\ref{UniqueJord}]
We start by showing that the bijections $J_s^{\bG}: \cE(G,s) \to
\cE(\rC_{G^{\ast}}(s),1)$ satisfy properties (1)--(7) of
Theorem~\ref{UniqueJord}. This will give the existence part of the theorem. From the existence of such a Jordan decomposition map in the case of connected center by \cite[Thm.~7.1]{DiMi90}, along with Propositions \ref{Jordiff12}, \ref{Jordiff3}, and \ref{Jordiff45}, it follows that $J_s^{\bG}$ satisfies the conditions
(1)--(5). We now show $J_s^{\bG}$ satisfies (6). For this we assume that
$\varphi : (\bG,F) \to (\bG_1,F_1)$ is an isotypy. Choose $F$-stable and
$F_1$-stable maximal tori $\bT \leqslant \bG$ and $\bT_1 \leqslant \bG_1$ such
that $\varphi(\bT) \leqslant \bT_1$ and let $\wt\bG = \bG \times_{\rZ(\bG)} \bT$
and $\wt\bG_1 = \bG_1\times_{\rZ(\bG)} \bT_1$. By Lemma~\ref{lem:epilifts}, we
can find an isotypy $\tilde\varphi : (\wt\bG,F) \to (\wt\bG_1,F_1)$ satisfying
$\iota_{\bT_1}\circ\varphi = \tilde\varphi \circ\iota_{\bT}$.

Pick isotypies $\varphi^{\ast}$, $\tilde\varphi^{\ast}$, $\iota_{\bT}^{\ast}$,
and $\iota_{\bT_1}^{\ast}$, dual to $\varphi$, $\tilde\varphi$, $\iota_{\bT}$,
and $\iota_{\bT_1}$, respectively. By assumption there exists an element $s_1
\in G_1^{\ast}$ satisfying $s = \varphi^{\ast}(s_1)$. We choose an element
$\tilde{s}_1 \in \wt G_1^{\ast}$ satisfying $s_1 =
\iota_{\bT_1}^{\ast}(\tilde{s}_1)$ and let $\tilde{s} =
\tilde\varphi^{\ast}(\tilde{s}_1) \in \wt G^{\ast}$. By definition,
$J_{\tilde{s}}^{\wt\bG} = \tw{\top}\iota_{\bT}^{\ast} \circ J_s^{\bG} \circ
\tw{\top}\iota_{\bT}$ and $J_{\tilde{s}_1}^{\wt\bG_1} =
\tw{\top}\iota_{\bT_1}^{\ast} \circ J_{s_1}^{\bG} \circ \tw{\top}\iota_{\bT_1}$. By Theorem~\ref{thm:prop-6-DM}, we have $J_{\tilde{s}_1}^{\wt\bG_1} =
\tw{\top}\tilde\varphi^{\ast} \circ J_{\tilde{s}}^{\bG} \circ \tw{\top}\varphi$.
It follows from (2) of Proposition~\ref{prop:bumper-Js} and
Proposition~\ref{prop:commute-diag-isotypies} that $J_{s_1}^{\bG_1} =
\tw{\top}\varphi^{\ast} \circ J_s^{\bG} \circ \tw{\top}\varphi$.

Finally, we show $J_s^{\bG}$ satisfies (7). Assume $\bG=\prod_j \bG_j$ is a
direct product. We fix a regular embedding $\iota_j : \bG_j \to \wt\bG_j$ for
each $j$ and a dual $\iota_j^{\ast} : \wt\bG_j^{\ast} \to \bG_j^{\ast}$. If
$\wt\bG = \prod_j \wt\bG_j$ and $\wt\bG^{\ast} = \prod_j \wt\bG_j^{\ast}$ then
$\iota = \prod_j \iota_j : \bG \to \wt\bG$ is a regular embedding of $\bG$ with
dual $\iota^{\ast} = \prod \iota_j^{\ast} : \wt\bG^{\ast} \to \bG^{\ast}$. We
have $s = \prod_j s_j$ and $\tilde{s} = \prod_j \tilde{s}_j$. By
\cite[Thm.~7.1]{DiMi90}, and the definition of our bijection,
\begin{equation*}
\tw{\top}\iota^{\ast} \circ J_s^{\bG} \circ \tw{\top}\iota
=
J_{\tilde{s}}^{\tilde\bG} 
= 
\prod_{j} J_{\tilde{s}_j}^{\wt\bG_j}
=
\prod_j \tw{\top}\iota_j^{\ast} \circ J_{s_j}^{\bG_j} \circ
\tw{\top}\iota_j
\end{equation*}
As $\tw{\top}\iota$ and $\tw{\top}\iota^{\ast}$ factor naturally over the direct
product, we see immediately that (7) holds.

Now we show that this is the only family of maps satisfying these properties.
For this, we assume we have a family of bijections $f_s^{\bG} : \cE(G,s) \to
\cE(\rC_{G^{\ast}}(s),1)$ satisfying the conditions of Theorem~\ref{UniqueJord}. 
Fix $(\bG,F)$ and $s$ and let $\iota : (\bG,F) \to (\wt\bG,F)$ be a regular
embedding with dual $\iota^{\ast} : (\wt\bG^{\ast},F^{\ast}) \to (\bG,F)$.

Let $\tilde{s} \in \wt G^{\ast}$ be an element such that
$\iota^{\ast}(\tilde{s}) = s$. Within our family of bijections we have a
bijection $f_{\tilde{s}}^{\wt\bG} : \cE(\wt G,\tilde{s}) \to \cE(\rC_{\wt
G^{\ast}}(\tilde{s}),1)$. Since $\iota$ is an isotypy, it follows from (6) of
Theorem~\ref{UniqueJord}, that
\begin{equation} \label{final}
f_{\tilde{s}}^{\wt\bG} = \tw{\top}\iota^{\ast} \circ f_s^{\bG} \circ \tw{\top}\iota
:
\cE(\wt G,\tilde s) \to \cE(\rC_{\wt G^{\ast}}(\tilde s), 1),
\end{equation}
where $\tilde s \in \wt G^{\ast}$ is an element satisfying $s =
\iota^{\ast}(\tilde s)$.  On the other hand, by Lemma~\ref{lem:Js-is-DM-bij}, we
know that $f_{\tilde{s}}^{\wt\bG} = J_{\tilde{s}}^{\wt\bG}$ must be
Digne--Michel's unique Jordan decomposition.  Now by Lemma \ref{restrictionbij}
the unique bijection $f_s^{\bG}$ which can satisfy (\ref{final}) is $J_{s,
\iota}^{\cD} = J_s^{\bG}$ as in (\ref{eq:def-Js}).  Thus $f_s^{\bG} =
J_s^{\bG}$.
\end{proof}


\begin{thebibliography}{NSV20}

\bibitem[Bon00]{Bon00}
C.~Bonnaf\'e, Mackey formula in type $A$, \emph{Proc. London Math. Soc.}, \textbf{80} (2000), 545--574.

\bibitem[Bon05]{bonnafequasiisol}
C.~Bonnaf\'e, Quasi-isolated elements in reductive groups, \emph{Comm. Algebra}, \textbf{33} (2005), no. 7, {2315--2337}.

\bibitem[Bon06]{Bon06}
C.~Bonnaf\'e, Sur les caract\`eres des groupes r\'eductifs finis \`a centre non connexe: applications aux groupes sp\'eciaux lin\'eaires et unitaires, 
\emph{Ast\'erisque} (2006), no.~306.

\bibitem[BR06]{BR06}
C.~Bonnaf\'e and R.~Rouquier,
Coxeter orbits and modular representations,
\emph{Nagoya Math. J}, \textbf{183} (2006), 1--34.

\bibitem[CE04]{CaEn04}
M.~Cabanes and M.~Enguehard, Representation theory of finite reductive groups. New Mathematical Monographs, 1, Cambridge University Press, Cambridge, 2004. 

\bibitem[Ca85]{Ca85}
R.~Carter, Finite groups of Lie type. Conjugacy classes and complex characters.  Pure and Applied Mathematics (New York), John Wiley \& Sons, Inc., New York, 1985.

\bibitem[DL76]{DL76}
P.~Deligne and G.~Lusztig,
Representations of reductive groups over finite fields,
\emph{Ann. of Math.} (2) \textbf{103} (1976), no.~1, 103--161.

\bibitem[DM90]{DiMi90}
F.~Digne and J.~Michel, On Lusztig's parametrization of characters of finite groups of Lie type, \emph{Ast\'erisque} No. \textbf{181--182} (1990), 6, 113--156.

\bibitem[DM20]{dmbook2}
F.~Digne and J.~Michel,
Representations of finite groups of Lie type.
London Mathematical Society Student Texts, 95,
Cambridge University Press, Cambridge, 2020.

\bibitem[Ge03]{Ge03}
M.~Geck, Character values, Schur indices, and character sheaves, \emph{Represent. Theory} \textbf{7} (2003), 19--55.

\bibitem[Li23]{Li23}
C.~Li, Restrictions of irreducible characters of finite groups of Lie type to derived subgroups and regular semisimple elements. \emph{Comm. Algebra} {\bfseries 51} (2023), no.~3, 983--990.

\bibitem[Lu76]{Lu76}
G.~Lusztig, Coxeter orbits and eigenspaces of Frobenius, \emph{Invent. Math.} \textbf{38} (1976), 101--159.

\bibitem[Lu78]{Lu78}
G.~Lusztig, Representations of finite Chevalley groups.
CBMS Regional Conf. Ser. in Math., 39
American Mathematical Society, Providence, RI, 1978.

\bibitem[Lu81]{Lu81}
G.~Lusztig, On a theorem of Benson and Curtis. \emph{J. Algebra} {\bfseries 71} (1981), no.~2, 490--498.

\bibitem[Lu88]{Lu88}
G.~Lusztig, On the representations of reductive groups with disconnected centre.  Orbites unipotentes et repr\'esentations, I, \emph{Ast\'erisque} \textbf{No. 168} (1988), 10, 157--166.

\bibitem[Lu08]{Lu08}
G.~Lusztig, Irreducible representations of finite spin groups.
\emph{Represent. Theory} \textbf{12} (2008), 1--36.

\bibitem[LS85]{LS85}
G.~Lusztig and N.~Spaltenstein, 
On the generalized Springer correspondence for classical groups.Algebraic groups and related topics (Kyoto/Nagoya, 1983), 289--316. Adv. Stud. Pure Math., 6, North-Holland Publishing Co., Amsterdam, 1985.

\bibitem[NSV20]{NSV}
G.~Navarro, B.~{Sp{\"a}th}, and C.~Vallejo.
\newblock A reduction theorem for the {G}alois--{M}c{K}ay conjecture.
\newblock {\em Trans. Amer. Math. Soc.}, \textbf{373} (2020), no. 9, 6157--6183.

\bibitem[Ruh22]{Ruh22}
L.~Ruhstorfer.
\newblock Derived equivalences and equivariant Jordan decomposition,
\newblock {\em Represent. Theory}  {\bfseries 26} (2022), 542--584.

\bibitem[RSF22]{RSF22}
L.~Ruhstorfer and A.~A.~Schaeffer~Fry.
\newblock The inductive {M}c{K}ay--{N}avarro conditions for the prime $2$ and
  some groups of {L}ie type,
\newblock {\em Proc. Amer. Math. Soc. Ser. B} {\bf 9} (2022), 204--220.

\bibitem[RSF23]{RSF23}
L.~Ruhstorfer and A.~A.~Schaeffer~Fry.
\newblock Navarro's Galois--McKay conjecture for the prime $2$, 
\newblock Preprint \url{arXiv:2211.14237}, 2022.

\bibitem[SF19]{SF19}
A. A. Schaeffer Fry, Galois automorphisms on Harish-Chandra series and Navarro's self-normalizing Sylow $2$-subgroup conjecture, \emph{Trans. Amer. Math. Soc.} \textbf{372} (2019), no. 1, 457--483.

\bibitem[SFT18]{SFT18}
A. A. Schaeffer Fry and J. Taylor, On self-normalising Sylow 2-subgroups in type A, \emph{J. Lie Theory} \textbf{28} (2018), no. 1, 139--168.

\bibitem[SFT22]{SFT22}
A. A. Schaeffer Fry and J. Taylor, Galois automorphisms and classical groups, \emph{Transform. Groups} \textbf{28} (2023), 439–486.

\bibitem[SFV19]{SFV19}
A. A. Schaeffer Fry and C. R. Vinroot, Fields of character values for finite special unitary groups, \emph{Pacific J. Math.} \textbf{300} (2019), No. 2, 473–489.

\bibitem[Spr09]{Spr}
T. A. Springer, Linear algebraic groups.
Reprint of the 1998 second edition
Mod. Birkhäuser Class.
Birkhäuser Boston, Inc., Boston, MA, 2009.

\bibitem[SrVi15]{SrVi15}
B.~Srinivasan and C. R.~Vinroot, Jordan decomposition and real-valued characters of finite reductive groups with connected center, \emph{Bull. Lond. Math. Soc.} \textbf{47} (2015), no. 3, 427--435.

\bibitem[SrVi19]{SrVi19}
B.~Srinivasan and C. R.~Vinroot, Galois group action on Jordan decomposition of finite reductive groups with connected center, \emph{J. Algebra} \textbf{558} (2020), 708--727.

\bibitem[St68]{St68}
R.~Steinberg, Endomorphisms of linear algebraic groups.
Memoirs of the American Mathematical Society, No. 80 American Mathematical Society, Providence, R.I. 1968.

\bibitem[Tay16]{Tay16} 
J.~Taylor, Action of automorphisms on irreducible characters of symplectic
groups, \emph{J. Algebra}\textbf{ 505} (2018), 211--246.

\bibitem[Tay19]{Tay19} 
J.~Taylor, The structure of root data and smooth regular embeddings of reductive groups,
\emph{Proc. Edinb. Math. Soc.} \textbf{62} (2019), no.2, 523--552.
\end{thebibliography}
\end{document}